\newtheorem{thm}{Theorem}[subsection]
\newtheorem{prop}[thm]{Proposition}
\newtheorem{lem}[thm]{Lemma}
\newtheorem{cor}[thm]{Corollary}
\theoremstyle{definition}
\newtheorem{exm}[thm]{Example}
\newtheorem{defn}[thm]{Definition}
\theoremstyle{remark}
\newtheorem{remk}[thm]{Remark}
\newtheorem{remks}[thm]{Remarks}
\newtheorem{exms}[thm]{Examples}
\newtheorem{notat}[thm]{Notation}
\newtheorem*{ack}{Acknowledgments}
\numberwithin{equation}{subsection}
\newcommand{\Hom}{{\rm Hom}}
\newcommand{\Spec}{{\rm Spec \,}}
\newcommand{\Sch}{{\operatorname{\mathbf{Sch}}}} 
\newcommand{\Var}{{\operatorname{\mathbf{Var}}}}
\newcommand{\op}{{\text{\rm op}}}
\newcommand{\Spt}{{\mathbf{Spt}}}
\newcommand{\Spc}{{\mathbf{Spc}}}
\newcommand{\Sm}{{\mathbf{Sm}}}
\newcommand{\cofib}{{\operatorname{\rm cofib}}}
\newcommand{\diag}{\mathop{{\rm diag}}}
\newcommand{\Nis}{{\operatorname{Nis}}}
\newcommand{\ds}{{/\kern-3pt/}}
\newcommand{\tr}{{\operatorname{tr}}}
\newcommand{\un}{\underline}
\newcommand{\ov}{\overline}
\newcommand{\wt}{\widetilde}
\newcommand{\tuborg}{\left\{\begin{array}{ll}}
\newcommand{\sluttuborg}{\end{array}\right.}
\newcommand{\sC}{{\mathcal C}}
\newcommand{\sE}{{\mathcal E}}
\newcommand{\sF}{{\mathcal F}}
\newcommand{\sH}{{\mathcal H}}
\newcommand{\sK}{{\mathcal K}}
\newcommand{\sO}{{\mathcal O}}
\newcommand{\sS}{{\mathcal S}}
\newcommand{\A}{{\mathbb A}}
\newcommand{\C}{{\mathbb C}}
\newcommand{\G}{{\mathbb G}}
\newcommand{\bL}{{\mathbb L}}
\renewcommand{\P}{{\mathbb P}}
\newcommand{\Q}{{\mathbb Q}}
\newcommand{\Z}{{\mathbb Z}}
\renewcommand{\top}{{\rm top}}
\newcommand{\alg}{{\rm alg}}
\newcommand{\equi}{{\rm equi}}
\providecommand{\MGL}{\mathop{\rm MGL}\nolimits}
\providecommand{\MU}{\mathop{\rm MU}\nolimits}
\providecommand{\Set}{\mathop{{\bf Set}}\nolimits}
\providecommand{\Spc}{\mathop{{\bf Spc}}\nolimits}
\providecommand{\Hom}{\mathop{\rm Hom}\nolimits}
\providecommand{\sst}{\mathop{{\bf sst}}\nolimits}
\providecommand{\hosst}{\mathop{{\bf host}}\nolimits}
\providecommand{\MGL}{\mathop{\rm MGL}\nolimits}
\providecommand{\colim}{\mathop{\rm colim}}
\begin{document}
\title[Semi-topologization in motivic homotopy theory]
{Semi-topologization in motivic homotopy theory and applications}
\author{Amalendu KRISHNA and Jinhyun PARK}
\address{School of Mathematics, TIFR, Homi Bhabha Road, Mumbai, 400 005, India}
\email{amal@math.tifr.res.in}
\address{Dept. of Math. Sci. KAIST, 291 Daehak-ro, Yuseong-gu,
Daejeon, 305-701, Korea}
\email{jinhyun@mathsci.kaist.ac.kr; jinhyun@kaist.edu}

\keywords{motivic homotopy, semi-topologization, 
$K$-theory, morphic cohomology, algebraic cobordism}

\begin{abstract}
We study the semi-topologization functor of Friedlander-Walker from the perspective of  motivic homotopy theory. We construct a triangulated endo-functor on the stable motivic homotopy category $\mathcal{SH}(\C)$, which we call \emph{homotopy semi-topologization}. As applications, we discuss the representability of several semi-topological cohomology theories in $\mathcal{SH}(\C)$, a construction of a semi-topological analogue of algebraic cobordism, and a construction of Atiyah-Hirzebruch type spectral sequences for this theory.
\end{abstract}

\subjclass[2010]{Primary 14F42; Secondary 19E08}

\maketitle


\section{Introduction}\label{section:Intro}

The goal of this paper is to study semi-topological cohomology theories such as semi-topological $K$-theory of \cite{FW1} and morphic cohomology of \cite{FL} from the perspective of motivic homotopy theory. One feature of the semi-topological theories is that they can be  obtained as \emph{semi-topologizations} of other theories, such as motivic cohomology or algebraic $K$-theory, as pioneered by Friedlander and Walker \cite{FW0}, but semi-topologization does not respect all motivic weak-equivalences, so that it is not an endo-functor on motivic homotopy categories. Nonetheless, we show that it induces a derived functor, call it \emph{homotopy semi-topologization}, using that semi-topologization does respect at least \emph{object-wise} weak-equivalences (see \S\ref{subsec:sst smooth}). So, we first ask when a motivic weak-equivalence may become an object-wise one. After a review on motivic homotopy theory in \S \ref{sec:mot homotopy}, we answer it in \S \ref{sec:A1BG}:

\begin{thm}\label{thm:Main-1}
A motivic weak-equivalence $E \to F$ of $\A^1$-B.G. presheaves on $\Sm_S$ is an object-wise weak-equivalence. Let $T= (\mathbb{P}^1, \infty)$. A stable motivic weak-equivalence $E \to F$ of $\mathbb{A}^1$-B.G. motivic $\Omega_T$-bispectra on $\Sm_S$ is a $T$-level-wise object-wise weak-equivalence. 
\end{thm}

In \S \ref{sec:sst} and \S \ref{sec:sst smooth}, we discuss that these objects where motivic weak-equivalences behave well are closed under the semi-topologization, and we define in \S \ref{sec:sst BG} the \emph{derived} functor on the stable motivic homotopy category $\mathcal{SH}(\C)$:

\begin{thm}\label{thm:Main-2}
There is a triangulated endo-functor $\hosst : \mathcal{SH} (\mathbb{C}) \to \mathcal{SH} (\mathbb{C})$, which coincides with Friedlander-Walker semi-topologization on $\mathbb{A}^1$-B.G. motivic $\Omega_T$-bispectra. 
\end{thm}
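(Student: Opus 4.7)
The plan is to realize $\hosst$ as a composition $\sst \circ R$, where $R$ is a functorial $\mathbb{A}^1$-B.G. replacement and $\sst$ denotes the (levelwise) Friedlander-Walker semi-topologization, and to use Theorem~\ref{thm:Main-1} to pass the construction down to the homotopy category. First, I would invoke a functorial fibrant replacement $R$ in an appropriate (Morel-Voevodsky style) stable motivic model structure on $\Omega_T$-bispectra over $\Sm_{\C}$ whose fibrant objects are the $\mathbb{A}^1$-B.G. motivic $\Omega_T$-bispectra in the sense of Definition~\ref{defn:BG bispec}. For any bispectrum $E$ this gives a canonical motivic weak-equivalence $E \to R(E)$ with $R(E)$ an $\mathbb{A}^1$-B.G. bispectrum. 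The Friedlander-Walker functor $\sst$ on presheaves of spectra extends to $(s,p)$-bispectra by acting in the $p$-direction, and I set $\hosst(E) := \sst(R(E))$ as an object of $\mathcal{SH}(\C)$.

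Second, to check that $\hosst$ descends to $\mathcal{SH}(\C)$, let $f : E \to F$ be a stable motivic weak-equivalence. Then $R(f)$ is a stable motivic weak-equivalence between $\mathbb{A}^1$-B.G. bispectra, so by Theorem~\ref{thm:Main-1} each $R(f)_n : R(E)_n \to R(F)_n$ is an object-wise weak-equivalence of presheaves of spectra on $\Sm_{\C}$. The Friedlander-Walker recognition principle ensures that $\sst$ sends object-wise weak-equivalences of presheaves of spectra to object-wise weak-equivalences; hence $\sst(R(f))$ is $T$-level-wise an object-wise weak-equivalence, and in particular a stable motivic weak-equivalence. This produces the desired endo-functor on $\mathcal{SH}(\C)$.

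Third, to promote $\hosst$ to a \emph{triangulated} functor I would verify compatibility with $T$-suspension and with homotopy cofiber sequences. The replacement $R$ preserves (co)fiber sequences up to motivic weak-equivalence by general properties of fibrant replacement in a pointed stable model structure, and $\sst$, being constructed as a mapping-space colimit levelwise on presheaves of spectra, preserves cofiber sequences up to object-wise weak-equivalence. Combining these and tracking the $\Omega_T$-structure maps yields commutation of $\hosst$ with $T$-suspension and preservation of distinguished triangles. Finally, the coincidence with $\sst$ on $\mathbb{A}^1$-B.G. bispectra is essentially immediate: if $E$ is already $\mathbb{A}^1$-B.G., then $E \to R(E)$ is a motivic weak-equivalence between $\mathbb{A}^1$-B.G. bispectra and hence, by Theorem~\ref{thm:Main-1}, a level-wise object-wise weak-equivalence; applying $\sst$ produces an object-wise weak-equivalence $\sst(E) \to \hosst(E)$, so $\hosst(E) \simeq \sst(E)$ in $\mathcal{SH}(\C)$.

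The principal obstacle is the triangulated-structure step, and specifically the commutation with $T$-suspension: one must analyse how the Friedlander-Walker mapping-space construction interacts with smashing against $T = (\P^1,\infty)$ and with the adjoint $\Omega_T$-loop, since $\sst$ is not manifestly a monoidal or exact construction and behaves well only for object-wise equivalences. Once this compatibility is in place, the preservation of cofiber sequences is comparatively formal, and the descent argument via Theorem~\ref{thm:Main-1} then packages everything into the claimed triangulated endo-functor.
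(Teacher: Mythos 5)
Your construction and descent steps follow essentially the same route as the paper: the paper also obtains $\hosst$ as the right derived functor of $\sst$ (via \cite[Proposition~8.4.8]{Hirschhorn}, i.e.\ $\sst$ applied to a stable motivic fibrant replacement), using exactly the combination you propose — stable motivic fibrant bispectra are $\mathbb{A}^1$-B.G.\ motivic $\Omega_T$-bispectra, the bispectrum descent theorem turns motivic weak-equivalences between such objects into $T$-level-wise object-wise weak-equivalences, and the Friedlander--Walker recognition principle shows $\sst$ preserves these (this is Theorem~\ref{thm:BGA1 sst-bispec}). Your argument for agreement with $\sst$ on $\mathbb{A}^1$-B.G.\ $\Omega_T$-bispectra is also the paper's. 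One small imprecision: the fibrant objects of the stable motivic model structure are not all $\mathbb{A}^1$-B.G.\ motivic $\Omega_T$-bispectra but the ($T$-level-wise $S^1$-stable motivic fibrant) subclass of them; this does not affect your argument, since fibrancy implies the $\mathbb{A}^1$-B.G.\ $\Omega_T$ property, which is all you use.

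Where you go astray is in the triangulated-structure step. You single out commutation of $\sst$ with the $T$-suspension as ``the principal obstacle,'' but in $\mathcal{SH}(\mathbb{C})$ the shift functor of the triangulated structure is the simplicial suspension $\Sigma_s$, not $\Sigma_T$, and distinguished triangles are the cone sequences $E \to F \to C(f) \to \Sigma_s E$. So what actually has to be proved is: (i) $\hosst$ commutes with $\Sigma_s$, which is easy because $S^1$ is a constant presheaf, so $(S^1)^{\sst}\simeq S^1$ and Proposition~\ref{prop:sst and wedge} gives $(\Sigma_s E)^{\sst}\simeq \Sigma_s E^{\sst}$; and (ii) $\sst$ carries the cone construction to the cone construction, i.e.\ it preserves the defining push-out square — this is the real content (Lemma~\ref{lem:sst-tr}, resting on commutation of the $\sst$-colimits with colimits and the $\Delta[k]_+$-adjunction), together with the fact that $\sst$ preserves object-wise homotopy cofiber sequences (Proposition~\ref{prop:sst-cofiber}), which you dismiss as ``comparatively formal.'' Conversely, the $\Sigma_T$-compatibility you plan to labor over is not required for Theorem~\ref{thm:Main-2} at all (and is genuinely delicate, since $T \not\simeq T^{\sst}$; the paper establishes it only in special cases, e.g.\ Corollary~\ref{cor:Suspension-HZ} for $\mathbf{H}\mathbb{Z}$, by a separate argument through the shift functor $s_-$). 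So your plan would go through once you redirect the effort: drop the $\Sigma_T$ analysis, prove the $\Sigma_s$- and cone-compatibility of $\sst$ at the level of bispectra, and the rest of your outline matches the paper's proof.
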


Using $\hosst$, in \S \ref{sec:representing-K} and \S \ref{sec:representing-M} we prove the representability of the semi-topological $K$-theory and the morphic cohomology in $\mathcal{SH} (\C)$. In \S \ref{sec:MGL} we define a semi-topological analogue of the algebraic cobordism of \cite{Voevodsky} by simply homotopy semi-topologizing $\MGL$:  

\begin{thm}
\label{thm:Main-3}
The semi-topological $K$-theory and the morphic cohomology are representable in $\mathcal{SH} (\mathbb{C})$. There is a semi-topological cobordism $\MGL_{\sst}$ as a cohomology theory on $\Sm_{\C}$, with a natural transformation $\MGL \to \MGL_{\sst}$ that becomes an isomorphism with finite coefficients. For $X \in \Sm_{\C}$ and $n \ge 0$, there is a spectral sequence $
E_2 ^{p,q} (n) = L^{n-q} H^{p-q} (X) \otimes_{\mathbb{Z}} \mathbb{L}^q \Rightarrow \MGL_{\sst} ^{p+q,n} (X).$ This spectral sequence degenerates after tensoring with $\mathbb{Q}$. 
\end{thm}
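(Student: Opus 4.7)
The overall plan is to transport known structural results in $\mathcal{SH}(\C)$, namely representability of algebraic invariants, the slice filtration, and the Hopkins-Morel isomorphism, along the triangulated endo-functor $\hosst$ of \thmref{thm:Main-2}, exploiting that on $\A^1$-B.G. motivic $\Omega_T$-bispectra $\hosst$ computes the Friedlander-Walker semi-topologization on the nose. For representability, I would recall that algebraic $K$-theory and motivic cohomology are represented in $\mathcal{SH}(\C)$ by the $\A^1$-B.G. $\Omega_T$-bispectra $\BGL$ and $\EM$, respectively. On $\Sm_\C$, the Friedlander-Walker semi-topological $K$-theory and the Friedlander-Lawson morphic cohomology are by their original definitions the object-wise semi-topologizations of the presheaves represented by $\BGL$ and $\EM$; combining this with \thmref{thm:Main-2}, the spectra $\hosst(\BGL)$ and $\hosst(\EM)$ in $\mathcal{SH}(\C)$ represent semi-topological $K$-theory and morphic cohomology.

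Next, I would set $\MGL_{\sst} := \hosst(\MGL)$, with the natural transformation $\MGL \to \MGL_{\sst}$ obtained from the canonical comparison map of a presheaf with its semi-topologization, which descends to a natural transformation $\id \Rightarrow \hosst$ on $\mathcal{SH}(\C)$. For the finite-coefficients isomorphism, I would prove the more general statement that for every $\A^1$-B.G. $\Omega_T$-bispectrum $E$, the map $E \wedge \1/n \to \hosst(E) \wedge \1/n$ is a stable motivic weak-equivalence. Using the slice filtration on $\MGL$ and the triangulated nature of $\hosst$, this reduces via the Hopkins-Morel identification of the slices to the single case $E = \EM$, where it amounts to the rigidity statement that motivic cohomology and morphic cohomology agree with $\Z/n$-coefficients on $\Sm_\C$, a consequence of Suslin-type rigidity together with the Friedlander-Walker comparison.

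For the spectral sequence, Hopkins-Morel give $s_q \MGL \simeq \Sigma^{2q,q}\EM \otimes_\Z \mathbb{L}^q$, producing the slice spectral sequence $H^{p-q,n-q}(X) \otimes_\Z \mathbb{L}^q \Rightarrow \MGL^{p+q,n}(X)$. Since $\hosst$ is triangulated, applying it term-wise to the slice tower of $\MGL$ yields a convergent spectral sequence whose $E_2$-page is $\hosst(\Sigma^{2q,q}\EM)^{p-q,n-q}(X) \otimes_\Z \mathbb{L}^q = L^{n-q}H^{p-q}(X) \otimes_\Z \mathbb{L}^q$ and whose abutment is $\MGL_{\sst}^{p+q,n}(X)$. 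Rational degeneration then follows from the well-known rational splitting $\MGL_\Q \simeq \bigoplus_q \Sigma^{2q,q}\EM_\Q \otimes_\Z (\mathbb{L}^q \otimes \Q)$ in $\mathcal{SH}(\C)_\Q$, which $\hosst$ transfers verbatim to $\MGL_{\sst,\Q}$ by additivity, forcing $E_2 = E_\infty$ after $-\otimes \Q$.

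The main obstacle will be the finite-coefficients isomorphism, which requires both that the slice tower of $\MGL$ remain convergent after smashing with $\1/n$ and that its basic case $\EM \wedge \1/n \to \hosst(\EM) \wedge \1/n$ be an equivalence; the latter is where the substantive rigidity input enters. A subsidiary technical point is to verify that the stages of the slice tower of $\MGL$ lie in the subcategory of $\A^1$-B.G. $\Omega_T$-bispectra so that \thmref{thm:Main-2} applies term-wise to the tower; this should be accessible via the standard presentation of $\MGL$ by Thom spaces of tautological bundles over Grassmannians.
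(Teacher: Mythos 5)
Your overall route is the paper's: set $\MGL_{\sst}=\hosst(\MGL)$, represent the two theories by semi-topologizing explicit $\A^1$-B.G. motivic $\Omega_T$-bispectrum models, apply the triangulated functor $\hosst$ to the slice tower of $\MGL$ together with the Hopkins--Morel identification of the slices to produce the spectral sequence (Theorem~\ref{thm:sst HM ss}), and compare with the algebraic spectral sequence for the finite-coefficient and rational statements (the paper's rational input, \cite[Corollary~10.6]{NSO}, is essentially the splitting you invoke, fed through Theorem~\ref{thm:morphic rep}). Two places, however, are genuine gaps rather than omitted routine detail. First, representability is not ``by definition'': semi-topological $K$-theory is defined in \cite{FW0} as the homotopy of the realization of the Quillen $K$-theory presheaf, and morphic cohomology is defined via cocycle spaces, its identification with the semi-topologization of the Friedlander--Suslin complexes being a theorem of \cite[Corollary~3.5]{FW2}. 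Since $\hosst$ agrees with the Friedlander--Walker functor only on strict $\A^1$-B.G. motivic $\Omega_T$-bispectra, and the usual $\BGL$ (built by fibrant replacement) is not object-wise the $K$-theory presheaf, the bulk of the paper's proof consists of manufacturing such strict models: Jardine's presheaf of $K$-theory spectra, its cofibrant-fibrant replacement, the Thom class and the product assemble into the bispectrum $\sK^{\rm alg}$ of Proposition~\ref{prop:A1BF K-alg}, and for $\EM$ one verifies the level-wise $\A^1$-B.G. and $\Omega$-spectrum properties of the Friedlander--Suslin model. Relatedly, your ``subsidiary technical point'' about the slice-tower stages being $\A^1$-B.G. is not what is needed ($\hosst$ is defined on all of $\mathcal{SH}(\C)$); the $\A^1$-B.G. input enters instead through the commutation $(\Sigma_T^n\Sigma_s^\infty\EM)^{\hosst}\simeq\Sigma_T^n(\Sigma_s^\infty\EM)^{\hosst}$ (Corollary~\ref{cor:Suspension-HZ}), which is what lets you identify the $E_2$-terms with morphic cohomology.

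Second, your claim that $E\wedge\mathbb{S}/n\to\hosst(E)\wedge\mathbb{S}/n$ is an equivalence for \emph{every} $\A^1$-B.G. motivic $\Omega_T$-bispectrum $E$ is both unsupported by your own reduction (the slice tower of $\MGL$ only treats $E=\MGL$) and far stronger than the available inputs; in this generality it is a rigidity statement for arbitrary motivic spectra, which is not what Suslin-type rigidity plus the Friedlander--Walker comparison gives. The paper avoids this by simply running the mod-$l$ spectral sequence comparison and quoting that motivic cohomology maps isomorphically to morphic cohomology with finite coefficients \cite[Theorem~30]{FW Handbook} (Corollary~\ref{cor:MGL-fin}); your essential input for the $\EM$-case is the same, so the claim should be restricted to $\MGL$ and phrased as a comparison of the two (convergent) towers or spectral sequences. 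Finally, in the rational step, transporting the splitting of $\MGL_{\Q}$ ``by additivity'' uses $\hosst$ on an infinite wedge, whereas only finite additivity is established; this is repairable (semi-topologization commutes with wedges at the model level, or one argues as in Corollary~\ref{cor:MGL-Q} via \cite{NSO}), but it should be addressed rather than asserted.
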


\emph{Conventions and notations:} When $S$ is a noetherian scheme of finite Krull dimension, an $S$-scheme is a separated scheme of finite type over $S$. The category of $S$-schemes is $\Sch_S$, while its subcategory of smooth schemes is $\Sm_S$.  
A variety over $k$ is a reduced $k$-scheme, not necessarily quasi-projective. The category of $k$-varieties is $\Var_k$. 

Let $\Set$, $\Spc$, and $\Spc_{\bullet}$ be the categories of sets, simplicial sets, and pointed simplicial sets. Let $\Spt$ be the category of Bousfield-Friedlander spectra \cite{BF} (see \S \ref{sec:S1 stable motivic}). 
The set of maps $K \to L$ in $\Spc_{\bullet}$ is $\Hom_{\bullet}(K,L)$. 

The symbol $\Delta$ is used for the following, and no confusion should arise. First, $\Delta$ is the category whose objects are $[n]:= \{ 0, \cdots, n \}$ for $n \geq 0$ and the morphisms are nondecreasing set functions. The notation $\Delta[n]$ is the simplicial set $\Hom_{\Set} ( - , [n])$ by Yoneda. The notation $\Delta^n _{\top}$ is the topological $n$-simplex $\{ (t_0, \cdots, t_n ) \in \mathbb{R}^{n+1}| 0 \leq t_i \leq 1, \sum_i t_i = 1 \}$, while $\Delta^n$ is the algebraic $n$-simplex $\Spec (k[t_0, \cdots, t_n]/ \sum_i t_i -1)$.

\section{Recollection of motivic homotopy theory}\label{sec:mot homotopy}

We review basics of motivic homotopy theory from \cite{Jardine}, \cite{MV}, and \cite{Morel}. Throughout \S \ref{sec:mot homotopy}-\ref{sec:A1BG}, let $S$ be a fixed noetherian scheme of finite Krull dimension.

\subsection{Motivic spaces} We regard an object of $\Spc$ as a \emph{space}, that of $\Spc_{\bullet}$ as a \emph{pointed space}. A \emph{motivic space over $S$} is a simplicial presheaf on $\Sm_S$. A \emph{pointed motivic space} is a pointed simplicial presheaf on $\Sm_S$. Let $\Spc (S)$ and $\Spc_{\bullet} (S)$ be the categories of unpointed and pointed motivic spaces. A presheaf of sets on $\Sm_S$ is a motivic space of simplicial dimension zero. Each $X \in \Sm_S$ is a motivic space by Yoneda embedding. By $X_+$, we mean $X \coprod S \in \Spc_{\bullet} (S)$. Each (pointed) space $K$ is a (pointed) motivic space, being a constant presheaf on $\Sm_S$. For $U \in \Spc_{\bullet}(S)$, the suspension $\Sigma_U : \Spc_{\bullet} (S) \to \Spc_{\bullet} (S)$ sends $E$ to $ E \wedge U$. For $U = S^1, (\G_m, \{1\})$ and $T = (\mathbb{P}^1, \infty)$, we write $\Sigma_U$ as $\Sigma_s, \Sigma_t$, and $\Sigma_T$. For $E, F$ in $\Spc(S)$ and in $ \Spc_{\bullet} (S)$, let ${\mathcal{H}om}(E, F)$ and ${\mathcal{H}om}_{\bullet} (E,F)$ be the internal hom presheaves of objects in $\Spc$ and $\Spc_{\bullet}$. 
For $E \in \Spc_{\bullet} (S)$, the functor ${\mathcal{H}om}_{\bullet}(E, -)$ on $\Spc_{\bullet}$ is denoted by $\Omega_E(-)$. For $E = S^1$ and $(\G_m, 1)$, we write $\Omega_E(-)$ as $\Omega_s(-)$ and $\Omega_t(-)$.

\subsection{$S^1$-stable motivic homotopy category} \label{sec:S1 stable motivic}
Recall (\cite[Theorem~1.1]{Jardine}) that $\Spc(S)$ is a proper simplicial cellular closed model category, where a map $f: E \to F$ is a \emph{Nisnevich local weak-equivalence} if all induced Nisnevich stalk maps $E_x \to F_x$ are weak-equivalences of $\Spc$, while \emph{cofibrations} are monomorphisms, and \emph{Nisnevich fibrations} are defined in terms of the right lifting property with respect to all trivial cofibrations. A similar model structure on $\Spc_{\bullet} (S)$ exists. Inverting the Nisnevich local weak-equivalences, we get the homotopy categories $\mathcal{H}^{\Nis} (S)$ and $\mathcal{H}^{\Nis} _{\bullet} (S)$. For $E, F \in \Spc_{\bullet} (S)$, let $[E,F]_{\Nis}:=\Hom_{\mathcal{H}^{\Nis} _{\bullet} (S)} (E, F)$. See \cite{Jardine} and \cite{MV} for more details.

A \emph{spectrum}, or an \emph{$S^1$-spectrum}, is a sequence $(E_0, E_1, \cdots)$, $E_i \in \Spc_{\bullet}$, with morphisms $S^1 \wedge E_n \to E_{n+1}$ in $\Spc_{\bullet}$. The category of spectra is $\Spt$, and the category of presheaves of spectra on $\Sm_S$ is $\Spt (S)$. An object of $\Spt(S)$ is called a \emph{motivic spectrum}.

\subsubsection{Nisnevich model structure on motivic spectra over $S$}\label{subsubsection:NMSpec}
A morphism $f:E \to F$ in $\Spt(S)$ is an \emph{object-wise weak-equivalence}, if for each $U \in \Sm_S$, the map $f(U) : E(U) \to F(U)$ is an $S^1$-stable weak-equivalence in $\Spt$. A morphism $f: E \to F$ in $\Spt(S)$ is a \emph{Nisnevich local weak-equivalence}, if for each $U \in \Sm_S$ and $x \in U$, the induced map $f_x : E _x \to F_x$ on the Nisnevich stalks is an $S^1$-stable weak-equivalence in $\Spt$. A map $f: E \to F$ in $\Spt(S)$ is a \emph{cofibration} if $f_0$ is a monomorphism and $E_{n+1} \coprod_{ S^1 \wedge E_n} S^1 \wedge F_n \to F_{n+1}$ is a monomorphism in $\Spc(S)$ for each $n \ge 0$. Equivalently, the maps $E_n \to F_n$ and $S^1 \wedge ({F_n}/{E_n}) \to {F_{n+1}}/{E_{n+1}}$ are monomorphisms in $\Spc(S)$ for each $n \ge 0$. A \emph{Nisnevich fibration} in $\Spt(S)$ is a map with the right lifting property with respect to all trivial cofibrations. Giving a cofibration $E \to F$ in $\Spt(S)$ is equal to giving cofibrations $E(U) \to F(U)$ in $\Spt$. A morphism $E \to F$ in $\Spt(S)$ is a Nisnevich local weak-equivalence if and only if the induced map of Nisnevich sheaves associated to the presheaves $U \mapsto \pi_n (E(U)), \pi_n (F(U))$, is an isomorphism for all $n \in \mathbb{Z}$. Recall:

\begin{thm}[{\cite[Theorem~2.34]{Jardine1}, \cite[Lemma~2.3.6]{Morel}}] \label{thm:MSSpectra}
The above Nisnevich local weak-equivalences, cofibrations and Nisnevich fibrations define a proper simplicial closed model structure on $\Spt (S)$. An object $E$ is cofibrant if and only if the maps $S^1 \wedge E_n \to E_{n+1}$ are monomorphisms. An object $E$ is Nisnevich fibrant if and only if each $E_n$ is a Nisnevich fibrant pointed motivic space and the adjoint  maps $E_n \to \Omega_s^1 E_{n+1}$ are Nisnevich local weak-equivalences.   
\end{thm}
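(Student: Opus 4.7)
The plan is to transplant the classical Bousfield--Friedlander stabilization of $\Spt$ to the Nisnevich-local setting, with pointed motivic spaces $\Spc_{\bullet}(S)$ (in the Nisnevich local model structure of \cite{Jardine}) replacing pointed simplicial sets at each level.

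First, I would build a strict level model structure on $\Spt(S)$ in which the weak-equivalences and fibrations are defined to be Nisnevich local weak-equivalences and Nisnevich fibrations at every level, while the cofibrations are exactly those described in the theorem statement (monomorphisms in level zero, together with the pushout-corner monomorphism condition on the structure maps). Because the Nisnevich local model structure on $\Spc_{\bullet}(S)$ is cofibrantly generated, the generating (trivial) cofibrations of this level structure are obtained by applying the free-spectrum functors $F_n$ to generating (trivial) cofibrations of $\Spc_{\bullet}(S)$, and the small object argument produces the required factorizations. Properness and the simplicial structure descend level-wise from $\Spc_{\bullet}(S)$.

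Second, I would pass from the level structure to the stable structure by keeping the cofibrations unchanged and enlarging the class of weak-equivalences to the stable Nisnevich local weak-equivalences, namely the maps that induce an isomorphism on the Nisnevich sheafifications of the presheaves $U \mapsto \pi_n(E(U))$ for all $n \in \Z$. Concretely this is a left Bousfield localization of the level structure at the set of stabilization maps $F_{n+1}(S^1 \wedge K) \to F_n(K)$, where $K$ runs over a set of representatives of the generating cofibrations of $\Spc_{\bullet}(S)$. The stable fibrations are then defined by the right lifting property against trivial cofibrations, and the model category axioms follow from the general machinery of Bousfield localization applied to the cellular level structure.

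Third, I would read off the characterizations of cofibrant and fibrant objects. The cofibrancy statement is immediate from the definition of cofibrations, since every pointed motivic space is cofibrant (cofibrations in $\Spc_{\bullet}(S)$ are monomorphisms), so the level-zero condition is automatic and only the pushout-corner condition on the $\sigma_n$ remains, which is equivalent to each $\sigma_n$ being a monomorphism. The fibrancy statement is the Bousfield--Friedlander recognition principle in the motivic setting: a stably fibrant $F$ is in particular level-wise fibrant, and the lifting property against the localizing stabilization maps forces the adjoints $\wt{\sigma}_n : F_n \to \Omega_s F_{n+1}$ to be Nisnevich local weak-equivalences; conversely, if these two conditions hold, one checks fibrancy by reducing to the level structure after noting that the stable homotopy presheaves are already Nisnevich sheaves with the correct values.

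The main obstacle is this last recognition principle, which depends on Nisnevich descent for the sheafified stable homotopy groups of an $\Omega_s$-spectrum of Nisnevich fibrant objects. This is the step that genuinely uses that $S$ is noetherian of finite Krull dimension, ensuring that the Nisnevich topos has finite cohomological dimension so that stalk-wise computations of stable homotopy groups commute with Nisnevich sheafification. Once this descent input is in place, the remaining verifications are formal consequences of the Bousfield--Friedlander framework, exactly as in \cite[Theorem~2.34]{Jardine1} and \cite[Lemma~2.3.6]{Morel}.
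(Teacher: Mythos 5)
The paper itself does not prove this statement; it is quoted as background from \cite{Jardine1} and \cite{Morel}, so the only meaningful comparison is with those sources. Your two-step outline (a strict level structure with the stated cofibrations, built from the free-spectrum functors $F_n$ applied to the generating (trivial) cofibrations of the Nisnevich local structure on $\Spc_{\bullet}(S)$, followed by stabilization) is the standard Bousfield--Friedlander/Hovey route and is consistent in spirit with the cited references; in particular the latching-map characterization of cofibrations (\emph{cf.} \cite{Hovey}) and the resulting identification of cofibrant objects with the spectra whose bonding maps $\sigma_n$ are monomorphisms are handled correctly.

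Two steps, however, are not delivered by the machinery you invoke. First, the theorem asserts a \emph{proper} model structure, but left Bousfield localization of a left proper cellular structure (\cite{Hirschhorn}) only yields left properness; right properness of the stable structure needs a separate argument -- either the Q-functor criteria of \cite{BF} transported to the sheaf-theoretic setting, or Jardine's direct proof using that stable fibrations are level fibrations and that stalkwise stable equivalences are stable under pullback along them via the long exact sequence of a fibration. Second, in the converse half of the fibrancy recognition, your justification that for a level-fibrant $\Omega_s$-spectrum ``the stable homotopy presheaves are already Nisnevich sheaves with the correct values'' is simply false: the presheaves $U \mapsto \pi_n(F(U))$ are in general not sheaves, and finiteness of the Krull dimension does not change this. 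What is actually needed is (i) the characterization of local objects in the localization, which by adjunction on homotopy function complexes reduces the lifting condition against the stabilization maps to the condition that each $\wt{\sigma}_n : F_n \to \Omega_s F_{n+1}$ is a Nisnevich local weak-equivalence between Nisnevich fibrant levels, and (ii) the identification of the localized weak-equivalences with the stalkwise stable equivalences by which the paper defines Nisnevich local weak-equivalences of spectra; this identification is the genuine content of the theorem and is proved in \cite{Jardine1} by a sheafified telescope/Q-functor argument, not by the descent-type finiteness considerations you cite (those are what the paper needs later, e.g.\ for Theorem~\ref{thm:Nis descent thm} and Theorem~\ref{thm:A1Nis descent}, rather than for the existence of this model structure or its fibrancy criterion).
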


For each $E \in \Spc_{\bullet}(S)$, the motivic spectrum $\Sigma^{\infty}_sE = (E, \Sigma^1_s E, \Sigma^2_s E, \cdots )$ is cofibrant. The homotopy category with respect to the above \emph{Nisnevich local injective model structure} is $\mathcal{SH} _{S^1} ^{\Nis} (S)$. For $E, F \in \Spt(S)$, let $[E,F]_{\Nis}:= \Hom_{\mathcal{SH}_{S^1} ^{\Nis} (S)} (E, F)$.

\subsubsection{Motivic model structure on motivic spectra over $S$}\label{subsubsection:MMSpec}
The homotopy category with respect to the motivic model structure (see \cite{Jardine}) on $\Spc_{\bullet} (S)$ is denoted by $\mathcal{H}_{\bullet} (S)$, and we let $[E, F]_{\A^1}:=\Hom_{\mathcal{H}_{\bullet} (S)} (E, F) $. We recall from \cite[\S~4]{Morel}, the motivic model structure on $\Spt (S)$. We say $Z \in \Spt (S)$ is \emph{$\mathbb{A}^1$-local} if for any $E \in \Spt(S)$, the projection $E \wedge \mathbb{A}^1 _+ \to E$ induces an isomorphism of groups $[E,Z]_{\Nis} \simeq [E \wedge \mathbb{A}^1 _+, Z]_{\Nis}$. A morphism $f : E \to F$ in $\Spt(S)$ is an \emph{$S^1$-stable motivic weak-equivalence} if for each $\mathbb{A}^1$-local $Z$, the induced map $f^* : [F, Z]_{\Nis} \to [E, Z]_{\Nis}$ is an isomorphism. We often say that $f$ is a \emph{motivic weak-equivalence} of motivic spectra, for simplicity. 
The motivic weak-equivalences, cofibrations (as in \S \ref{subsubsection:NMSpec}) and \emph{motivic fibrations} (given by the right lifting property with respect to all trivial cofibrations), define a closed model structure on $\Spt (S)$, called the \emph{$S^1$-stable motivic model structure}. This model structure is the left localization of the Nisnevich local injective model structure with respect to the maps $E \wedge \mathbb{A}^1 _+ \to E$ for $E \in \Spt (S)$. By \cite[Proposition~3.4]{Hirschhorn} and Theorem \ref{thm:MSSpectra}, the motivic model structure on $\Spt (S)$ is proper and simplicial. 
A motivic spectrum is motivic fibrant if and only if it is Nisnevich fibrant and $\A^1$-local. Let $\mathcal{SH}_{S^1}(S)$ be the homotopy category of $\Spt(S)$ with respect to the $S^1$-stable motivic model structure. This model structure is equivalent to the one obtained by stabilizing the motivic model structure on $\Spc_{\bullet}(S)$ with respect to $\Sigma_s$, as described in \cite[Theorem~1.1]{Jardine}. It follows that $E \in \Spt(S)$ is motivic fibrant if and only if it is level-wise motivic fibrant in the motivic model structure on $\Spc_{\bullet}(S)$, and each map $E_n \to \Omega_sE_{n+1}$ is a motivic weak-equivalence. By \cite[Proposition~3.1.1]{Morel}, the category $\mathcal{SH}_{S^1} (S)$ is triangulated, where the shift functor $E \mapsto E[1]$ is $\Sigma_s$. We let $[E, F]_{\A^1}:=\Hom_{\mathcal{SH}_{S^1} (S)} (E,F)$. We have a pair of adjoint functors $\Sigma_s ^{\infty} : \Spc_{\bullet} (S) \leftrightarrow \Spt(S) : Ev^0_{s}$ given by $\Sigma_s ^{\infty}(E) = (E, \Sigma_s E, \Sigma_s ^2 E, \cdots)$ and $Ev^0_{s}(F) = F_0$. The functor $\Sigma_s ^{\infty}$ clearly preserves cofibrations. For $E \in \Spc_{\bullet}(S), F \in \Spt(S)$ and $p \in \Z$, there are natural isomorphisms (\emph{cf.} \cite[Theorem~5.2]{Voevodsky})
\begin{equation}\label{eqn:Lim-iso}
[\Sigma^{\infty}_sE[p], F]_{?} \simeq {\underset{n \ge -p}\colim} \ [S^{n+p}\wedge E, F_n]_{?}, \ \ ? = Nis \mbox{ or } \mathbb{A}^1,
\end{equation}
so that the functor $\Sigma_s ^{\infty}$ preserves motivic weak-equivalences. Thus, the pair $(\Sigma^{\infty}_s, Ev^0_{s})$ forms a Quillen pair, and one has adjoint functors $\Sigma_s ^{\infty} : \mathcal{H}_{\bullet} (S) \leftrightarrow \mathcal{SH}_{S^1} (S) : {\bf R}Ev^0_{s}$.

\subsection{Stable motivic homotopy category}\label{subsection:SHC}
The stable motivic homotopy category $\mathcal{SH} (S)$ was first constructed in \cite{Voevodsky}. It has several models. We review two such models. For $F \in \Spt(S)$ and $E \in \Spc_{\bullet}(S)$, we let $\Sigma_E F$ denote the motivic spectrum $(F_0\wedge E, F_1 \wedge E, \cdots )$. For $E = S^1$, the spectrum $\Sigma_E F$ is denoted by $\Sigma_sF$. Let $T= (\mathbb{P}^1, \infty)$.

\subsubsection{$(s,\mathfrak{p})$-bispectra model}\label{subsubsection: bispec}
Recall from \cite[\S8]{Levine coniveau} that an \emph{$(s,\mathfrak{p})$-bispectrum} over $S$ is a collection $E = \{ E_{m,n} \in \Spc_{\bullet} (S)  | m, n \geq 0 \}$ with the horizontal maps $\Sigma_s E_{m, n} \to E_{m+1, n}$ and the vertical maps $\Sigma_T E_{m, n} \to E_{m, n+1}$ such that the horizontal and the vertical maps commute. We regard it as a sequence $(E_0, E_1, \cdots)$, with the bonding maps $ \Sigma_TE_n \to E_{n+1}$, where $E_n \in \Spt(S)$ is $E_{*,n}:=(E_{0, n}, E_{1, n}, \cdots)$. Let $\Spt_{(s,\mathfrak{p})}(S)$ be the category of $(s,\mathfrak{p})$-bispectra over $S$. Given $E \in \Spt_{(s,\mathfrak{p})}(S)$ and $p,q \in \mathbb{Z}$, define $\pi_{p,q} (E)$ to be the presheaf $U \mapsto (\pi_{p,q} (E))(U) =\colim_n \Hom_{\mathcal{SH} _{S^1} (S)} (\Sigma_s ^{p-2q} \Sigma_T ^{q+n} \Sigma_s ^{\infty} U_+, E_n).$ A morphism $f: E \to F$ in $\Spt_{(s, \mathfrak{p})}(S)$ is called a \emph{stable motivic weak-equivalence} if the induced morphism $f_*: \pi_{p,q} (E) \to \pi_{p,q} (F)$ of presheaves is a stalk-wise isomorphism of groups on $(\Sm_S)_{\Nis}$. We often drop the word \emph{stable} for simplicity. 
There is a closed model structure on $\Spt_{(s,\mathfrak{p})} (S)$ ({\emph{cf.} \cite[\S~3]{Hovey}, \cite[\S~8.2]{Levine coniveau}), whose weak-equivalences are stable motivic weak-equivalences, called the \emph{stable motivic model structure}. By \cite[Proposition~1.14]{Hovey}, this model structure is obtained as a Bousfield localization of the level-wise model structure on $\Spt_{(s,\mathfrak{p})} (S)$ in which weak-equivalences (fibrations) are $T$-level-wise $S^1$-stable motivic weak-equivalences (motivic fibrations) in $\Spt(S)$, and $E \to F$ is a \emph{cofibration} if the maps $E_0 \to F_0$ and $E_{n+1} \coprod_{\Sigma_T E_n} \ \Sigma_T F_{n} \to F_{n+1}$
are cofibrations in the $S^1$-stable motivic model structure on $\Spt(S)$ for $n \ge 0$. This model structure on $\Spt_{(s,\mathfrak{p})} (S)$ is proper and simplicial. By \cite[Theorem~3.4]{Hovey}, we know $E \in \Spt_{(s,\mathfrak{p})}(S)$ is stable motivic fibrant if and only if each $E_n$ is $S^1$-stable motivic fibrant and the maps $ E_n \to \Omega_TE_{n+1}$ are $S^1$-stable motivic weak-equivalences for $n \ge 0$.

\subsubsection{$T$-spectra model}\label{subsubsection:T-spec-M}
A \emph{$T$-spectrum} $E$ over $S$ is a collection $(E_0, E_1, \cdots)$, $E_i \in \Spc_{\bullet} (S)$, with the maps  $\Sigma_T E_n \to E_{n+1}$. They form the category $\Spt_T (S)$. For $p,q \in \mathbb{Z}$, define the presheaf $\pi_{p,q} (E)$ on $\Sm_S$ by $U \mapsto (\pi_{p,q} (E))(U)=\colim_n \Hom_{\mathcal{H}_{\bullet} (S)} (\Sigma_s ^{p-2q} \Sigma_T ^{q+n} U_+, E_n).$ There is a proper simplicial closed model structure on $\Spt_T (S)$ in which $E \to F$ is a weak-equivalence if the induced map $f_*: \pi_{p,q} (E) \to \pi_{p,q} (F)$ is a stalk-wise isomorphism of groups on $(\Sm_S)_{\Nis}$. This model structure is obtained as a Bousfield localization of the model structure on $\Spt_{T} (S)$ in which weak-equivalences (fibrations) are level-wise motivic weak-equivalences (motivic fibrations) in $\Spc_{\bullet} (S)$. Given a motivic spectrum $E$, let $\Omega^{\infty}_sE : = \colim_m \ \Omega^m_s E_{m}$. For  a $T$-spectrum $E$, let $\Omega^{\infty}_TE : = \colim_m  \Omega^m_T E_{m}$. A $T$-spectrum $E = (E_0, E_1, \cdots )$ defines an $(s,\mathfrak{p})$-bispectrum $\sE : = (\Sigma^{\infty}_sE_0, \Sigma^{\infty}_s E_1, \cdots)$ by taking the level-wise simplicial infinite suspensions. Conversely, given an $(s,\mathfrak{p})$-bispectrum $\sF = (F_0, F_1, \cdots)$, we obtain a $T$-spectrum $F = \left(\Omega^{\infty}_s F_0 , \Omega^{\infty}_s F_1, \cdots \right)$. The correspondence $\Sigma^{\infty}_s :\Spt_T(S) \leftrightarrow \Spt_{(s,\mathfrak{p})}(S) : \Omega^{\infty}_s$ induces an equivalence between the homotopy categories of $\Spt_T (S)$ and $\Spt_{(s,\mathfrak{p})}(S)$. We write $\mathcal{SH} (S)$ for the common homotopy category. For $X \in \Spc_{\bullet} (S)$, one associates the infinite $T$-suspension spectrum, defined by $\Sigma_T ^{\infty} X:= (X,  \Sigma_T X , \Sigma^2_T X, \cdots)$, with the identity bonding maps $T \wedge T^{\wedge (n-1)} \wedge X \to T^n \wedge X$. We have suspension operations $\Sigma_T, \Sigma_s, \Sigma_{t}$ to $\Spt_{(s,\mathfrak{p})} (S)$ and $\Spt_T (S)$. The category $\mathcal{SH} (S)$ is triangulated with the shift functor $E \mapsto E[1]$ given by $\Sigma_s$, and all functors $\Sigma_T, \Sigma_s, \Sigma_{t}$ are auto-equivalences. For $E, F \in \Spt_{(s,\mathfrak{p})}(S)$, let $[E, F]_{\A^1}:= \Hom_{\mathcal{SH} (S)}(E, F)$. There is a Quillen pair $\Sigma^{\infty}_T : \Spt(S) \leftrightarrow \Spt_{(s,\mathfrak{p})}(S) : \Omega^{\infty}_T$ given by $\Sigma_s ^{\infty}(E) = (E, \Sigma_T E, \Sigma^2_T E, \cdots)$ and $\Omega^{\infty}_T(F) = (\Omega^{\infty}_T F_{0,*}, \Omega^{\infty}_T F_{1,*}, \cdots )$. This yields an adjoint pair of derived functors $\Sigma^{\infty}_T : \mathcal{SH}_{S^1} (S) \leftrightarrow \mathcal{SH} (S): {\bf R}\Omega^{\infty}_T.$ For $F \in \Spt_{(s,\mathfrak{p})}(S)$, one has ${\bf R}\Omega^{\infty}_T(F) =\Omega^{\infty}_T(\tilde{F}) = \tilde{F}_0$, where $F \to \tilde{F}$ is a stable motivic fibrant replacement of $F$, and $\tilde{F}_0 \in \Spt(S)$ is given by $ (\tilde{F}_{0,0}, \tilde{F}_{1,0}, \cdots).$

\subsubsection{Cohomology theories}\label{sec:cohomology of space}
Given $E, F \in \mathcal{SH} (S)$, the $E$-cohomology of $F$ is defined by $E^{a,b} (F):= [F, \Sigma ^{a,b} E]_{\A^1}$, where $a,b \in \mathbb{Z}$, $\Sigma ^{a,b} E: = \Sigma^{a-b}_s\Sigma^b_tE$. For $X \in \Sm_S$, using the object $\Sigma_T ^{\infty} X_+ \in \mathcal{SH} (S)$ we define 
$E^{a,b} (X):= E^{a,b} (\Sigma_T ^{\infty} X_+) = [\Sigma_T ^{\infty} X_+, \Sigma ^{a,b} E]_{\A^1} = [\Sigma^{\infty}_T \Sigma^{\infty}_s X_{+}, \Sigma^{a-2b}_s \Sigma^b_T E]_{\A^1}.$

\section{Motivic descent for $\A^1$-B.G. presheaves}\label{sec:A1BG}
Recall that a presheaf $E$ on $\Sm_S$ of objects in $\Spc$, $\Spc_{\bullet}$, or $\Spt$ has the \emph{B.G. property} if $E$ turns every Nisnevich square (\cite[Definition~3.1.5]{MV}) in $\Sm_S$
\begin{equation}\label{eqn:elementary square}
\begin{CD}
W @>>> U\\
@VVV @VV{p}V \\
V @>{j}>> X
\end{CD}
\end{equation} 
into a homotopy Cartesian square in $\Spc$, $\Spc_{\bullet}$, or $\Spt$. Recall the following (see \cite[Proposition~3.1.16]{MV}, \cite[Theorem~1.3, Corollary~1.4]{Jardine}), which gives a necessary and sufficient condition for a Nisnevich fibrant replacement to be an object-wise weak-equivalence.

\begin{thm}[Nisnevich descent theorem]\label{thm:Nis descent thm}
A presheaf $E$ on $\Sm_S$ of objects in $\Spc$, $\Spc_{\bullet}$, or $\Spt$ is B.G. if and only if every Nisnevich fibrant replacement $ E \to F$ is an object-wise weak-equivalence. A Nisnevich local weak-equivalence $E \to F$ of B.G. presheaves of objects in $\Spc, \Spc_{\bullet}$, or $\Spt$ is an object-wise weak-equivalence.
\end{thm}

\subsection{Motivic descent theorem}\label{subsection:MDT}
We establish a necessary and sufficient condition for a motivic fibrant replacement to be an object-wise weak-equivalence. 
Recall the following notion from \cite[Definition A.5]{Morel book}:

\begin{defn}\label{def:A1BG}Let $E$ be a presheaf on $\Sm_S$ of objects in $\Spc, \Spc_{\bullet}$, or $\Spt$. We say $E$ is \emph{$\mathbb{A}^1$-weak-invariant} if the map $E(X) \to E(X \times \mathbb{A}^1)$ induced by the projection is a weak-equivalence for all $X \in \Sm_S$. We say $E$ is \emph{$\mathbb{A}^1$-B.G.} if it is B.G. and $\mathbb{A}^1$-weak-invariant. We say $E$ is \emph{quasi-fibrant} (resp. \emph{motivic quasi-fibrant}) if every Nisnevich fibrant (resp. motivic fibrant) replacement $E \to F$ of $E$ is an object-wise weak-equivalence.
\end{defn}

Theorem \ref{thm:Nis descent thm} says $E$ is B.G. if and only if $E$ is quasi-fibrant. Let's begin with: 

\begin{lem}\label{lem:A1Nis-fib pi} Let $X \in \Sm_S$. $(1)$ If $F$ in $\Spc_{\bullet} (S)$ (resp. $\Spt(S)$) is Nisnevich fibrant, then we have a bijection $[S^p \wedge X_+, F]_{\Nis} \simeq \pi_p (F(X))$ (resp. $[\Sigma_s ^{\infty} X_+[p], F]_{\Nis} \simeq \pi_p (F(X))$).
$(2)$ If $F$ in $\Spc_{\bullet} (S)$ (resp. $\Spt(S)$) is motivic fibrant, then we have a bijection $[S^p \wedge X_+, F]_{\A^1} \simeq \pi_p (F(X))$ (resp. $[\Sigma_s ^{\infty} X_+[p], F]_{\A^1} \simeq \pi_p (F(X))$). 
\end{lem}
\begin{proof}
For $X \in \Sm_S$, the functors $ev_X: \Spc_{\bullet} (S) \leftrightarrow  \Spc_{\bullet} : sm_X$ given by $(ev_X: F \mapsto F(X))$ and $(sm_X : K \mapsto K \wedge X_{+})$ form a Quillen pair with respect to the Nisnevich local injective model structure and motivic model structure on $ \Spc_{\bullet} (S)$. In particular, their derived functors induce an adjoint pair of functors on the homotopy categories. The first isomorphism of $(1)$ follows immediately from this if $F \in \Spc_{\bullet} (S)$ is Nisnevich fibrant and the first isomorphism of $(2)$ follows if $F \in \Spc_{\bullet} (S)$ is motivic fibrant. The second isomorphisms of $(1)$ and $(2)$ follow from the first set of isomorphisms by applying Theorem~\ref{thm:MSSpectra} and ~\eqref{eqn:Lim-iso}.
\end{proof}

The following result follows immediately from Theorem \ref{thm:Nis descent thm} and \cite[Lemma~4.1.4]{Morel}.

\begin{lem}\label{lem:BG A1wi A1loc}Let $E$ be a B.G. presheaf on $\Sm_S$ of objects in $\Spc, \Spc_{\bullet}$, or $\Spt$. $(1)$ Let $E \to E'$ be a Nisnevich fibrant replacement. Then $E$ is $\mathbb{A}^1$-weak-invariant if and only if so is $E'$. $(2)$ $E$ is $\mathbb{A}^1$-weak-invariant if and only if $E$ is $\mathbb{A}^1$-local.
\end{lem}

\begin{lem}\label{lem:A1we Niswe}
A motivic fibrant replacement of an $\mathbb{A}^1$-B.G. presheaf on $\Sm_S$ of objects in $\Spc, \Spc_{\bullet}$, or $\Spt$ is also a Nisnevich fibrant replacement.
\end{lem}
\begin{proof}
We consider the case of presheaves of spectra as the other cases are similar. Let $f: E \to F$ be a motivic fibrant replacement. Since $F$ is Nisnevich fibrant and since cofibrations in the motivic model structure are Nisnevich cofibrations, it suffices to show that $f$ is a Nisnevich local weak-equivalence. Factor $f$ as a composition $f' \circ g: E  {\to}  E' {\to} F$, where $g$ is a Nisnevich trivial cofibration (thus, a motivic trivial cofibration) and $f'$ is a Nisnevich fibration. By the 2-out-of-3 axiom, $f'$ is a motivic weak-equivalence. We need to show that $f'$ is a Nisnevich local weak-equivalence. Since $F$ is Nisnevich fibrant and $f'$ is a Nisnevich fibration, it follows that $E'$ is Nisnevich fibrant. Thus, $g$ defines a Nisnevich fibrant replacement of $E$. Moreover, by Lemma~\ref{lem:BG A1wi A1loc}, we see that $E'$ is $\A^1$-local. Hence $E'$ is motivic fibrant. Now by Lemma~\ref{lem:A1Nis-fib pi}, $f': E' \to F$ is an object-wise weak-equivalence, thus a Nisnevich local weak-equivalence. 
\end{proof}

\begin{thm}[Motivic descent theorem]\label{thm:A1Nis descent}
Let $E$ be a presheaf on $\Sm_S$ of objects in $\Spc, \Spc_{\bullet}$, or $\Spt$. Then, $E$ is $\mathbb{A}^1$-B.G. if and only if it is motivic quasi-fibrant. A motivic weak-equivalence of $\A^1$-B.G. presheaves is an object-wise weak-equivalence.
\end{thm}

\begin{proof}
Suppose that $E$ is motivic quasi-fibrant. Let $f:E \to E'$ be a motivic fibrant replacement. Then $E'$ is Nisnevich fibrant (thus B.G.) and $\A^1$-local. So, by Lemma~\ref{lem:BG A1wi A1loc}, $E'$ is $\A^1$-B.G. Since $E$ is motivic quasi-fibrant, $f$ is an object-wise weak-equivalence, thus a Nisnevich local weak-equivalence. So, by Theorem \ref{thm:Nis descent thm}, $E$ is B.G., and by Lemma \ref{lem:BG A1wi A1loc}, it is $\mathbb{A}^1$-weak-invariant, that is, $E$ is $\mathbb{A}^1$-B.G. Conversely, suppose $E$ is an $\A^1$-B.G. Let $f: E \to E'$ be a motivic fibrant replacement. By Lemma~\ref{lem:A1we Niswe}, $f$ is also a Nisnevich fibrant replacement. That $f$ is an object-wise weak-equivalence follows now from Theorem~\ref{thm:Nis descent thm}. Thus, $E$ is motivic quasi-fibrant. This proves the first assertion. To prove the second one, given a motivic weak-equivalence $f:E \to F$ of $\mathbb{A}^1$-B.G. presheaves, form a commutative diagram
$$\xymatrix{ E \ar[r]   \ar[d] & F \ar[d] \\ E' \ar[r] ^{f'} & F',}$$
where the vertical arrows are motivic fibrant replacements, which are object-wise weak-equivalences by the first part. By the $2$-out-of-$3$ axiom, $f'$ is a motivic weak-equivalence. In this case, we have shown in the proof of Lemma~\ref{lem:A1we Niswe} that $f'$ is an object-wise weak-equivalence. But, we saw that two vertical arrows are also object-wise weak-equivalences. Thus, $f$ is an object-wise weak-equivalence. 
\end{proof}

\begin{cor}\label{cor:Descent-CR-1}
The isomorphisms in Lemma~\ref{lem:A1Nis-fib pi}(1) hold for all B.G. pointed motivic spaces and spectra, while Lemma~\ref{lem:A1Nis-fib pi}(2) hold for all $\A^1$-B.G. ones.
\end{cor}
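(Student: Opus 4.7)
The plan is to derive the corollary immediately from the two descent theorems (Theorem~\ref{thm:Nis descent thm} and Theorem~\ref{thm:A1Nis descent}) by reducing to the fibrant cases that Lemma~\ref{lem:A1Nis-fib pi} already handles. There is no real obstacle here; the content is entirely in the descent theorems and the naturality of the adjunction isomorphisms in the homotopy category.

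For the Nisnevich (part (1)) statement, let $F$ be a B.G.\ presheaf of pointed simplicial sets (resp.\ of spectra) on $\Sm_S$, and choose a Nisnevich fibrant replacement $f : F \to F'$. By Theorem~\ref{thm:Nis descent thm}, $f$ is an object-wise weak-equivalence. In particular, for each $X \in \Sm_S$, the map $f(X) : F(X) \to F'(X)$ induces an isomorphism $\pi_p(F(X)) \xrightarrow{\simeq} \pi_p(F'(X))$ for all $p \ge 0$ (resp.\ all $p \in \Z$). On the other hand, $f$ is a Nisnevich local weak-equivalence and hence an isomorphism in $\mathcal{H}^{Nis}_{\bullet}(S)$ (resp.\ in $\mathcal{SH}^{Nis}_{S^1}(S)$), so it induces an isomorphism
\[
[S^p \wedge X_+, F]_{Nis} \xrightarrow{\simeq} [S^p \wedge X_+, F']_{Nis}
\]
(resp.\ $[\Sigma^{\infty}_s X_+[p], F]_{Nis} \xrightarrow{\simeq} [\Sigma^{\infty}_s X_+[p], F']_{Nis}$). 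Since $F'$ is Nisnevich fibrant, the right-hand side is identified with $\pi_p(F'(X))$ by Lemma~\ref{lem:A1Nis-fib pi}(1). Composing the two isomorphisms yields the desired identification for $F$.

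For the motivic (part (2)) statement, let $F$ now be $\A^1$-B.G., and choose instead a motivic fibrant replacement $f : F \to F'$. By Theorem~\ref{thm:A1Nis descent}, $f$ is an object-wise weak-equivalence, giving $\pi_p(F(X)) \simeq \pi_p(F'(X))$. Since $f$ is also a motivic weak-equivalence, it becomes an isomorphism in $\mathcal{H}_{\bullet}(S)$ (resp.\ in $\mathcal{SH}_{S^1}(S)$), so the induced map $[S^p \wedge X_+, F]_{\A^1} \to [S^p \wedge X_+, F']_{\A^1}$ (resp.\ $[\Sigma^{\infty}_s X_+[p], F]_{\A^1} \to [\Sigma^{\infty}_s X_+[p], F']_{\A^1}$) is an isomorphism. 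The target is $\pi_p(F'(X))$ by Lemma~\ref{lem:A1Nis-fib pi}(2) applied to the motivic fibrant $F'$, and composition again yields the stated identification for $F$. Since the whole argument is merely an application of the two descent theorems combined with Lemma~\ref{lem:A1Nis-fib pi}, there is no serious step to isolate as the main obstacle.
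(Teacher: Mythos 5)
Your argument is correct and is exactly the intended one: the paper states this corollary without proof as an immediate consequence of Theorem~\ref{thm:Nis descent thm}, Theorem~\ref{thm:A1Nis descent} and Lemma~\ref{lem:A1Nis-fib pi}, and your reduction via a (Nisnevich, resp.\ motivic) fibrant replacement that is simultaneously an object-wise weak-equivalence and an isomorphism in the relevant homotopy category is precisely that argument.
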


\begin{cor}\label{cor:BG-colim}
The class of motivic quasi-fibrant presheaves on $\Sm_S$ of objects in $\Spc, \Spc_{\bullet}$, or $\Spt$ is closed under taking filtered colimits.
\end{cor}
\begin{proof}
This follows by Theorem~\ref{thm:A1Nis descent} and the proof of \cite[Corollary~4.2.7]{Morel}.
\end{proof}

For $E = \left(E_0, E_1, \cdots \right) \in \Spt(S)$, let $E\{n\}$ denote the motivic spectrum $\left(E_n, E_{n+1}, \cdots \right)$. Let $m \geq -1$. 
We say that $E$ is an \emph{object-wise (resp. motivic) $\Omega_s$-spectrum above level $m$} if the map 
$E_n \to \Omega_sE_{n+1}$ is an object-wise (resp. motivic) weak-equivalence for each $n > m$. 
An object-wise (resp. motivic) $\Omega_s$-spectrum above level  $m = -1$ will be called an object-wise (resp. motivic) $\Omega_s$-spectrum.

\begin{cor}\label{cor:A1BG consequence}
Suppose $E \in \Spt(S)$ is $\A^1$-B.G. Let $E \to F$ be a motivic fibrant replacement. $(1)$ For each $m, n, p \ge 0$, the map $\Omega^m_sF_{n} \to \Omega^{m+p} _s F_{n+p}$ is an object-wise weak-equivalence. $(2)$ For each $m, n \ge 0$, the motivic spectrum $\Omega^m_sF\{n\}$ is $S^1$-stable motivic fibrant. $(3)$ For each $n > m$, the map $E_n \to F_n$ is an object-wise weak-equivalence if $E$ is an object-wise $\Omega_s$-spectrum above level $m$.
\end{cor}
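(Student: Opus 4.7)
The plan is to deduce all three parts from the motivic descent theorem (Theorem~\ref{thm:A1Nis descent}), after the preliminary observation that every motivic fibrant object in $\Spc_{\bullet}(S)$ or $\Spt(S)$ is automatically $\A^1$-B.G. Indeed, a motivic fibrant object is Nisnevich fibrant, hence B.G. by Theorem~\ref{thm:Nis descent thm}, and also $\A^1$-local; Lemma~\ref{lem:BG A1wi A1loc}(2) then gives $\A^1$-weak-invariance. Moreover, since $\Omega_s$ is a right Quillen endofunctor on $\Spc_{\bullet}(S)$, it preserves motivic fibrant pointed motivic spaces and motivic weak-equivalences between them.

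For (1), I would start from the characterization of motivic fibrant motivic spectra recalled in \S~\ref{subsubsection:MMSpec}: each $F_n$ is motivic fibrant and each adjoint $\wt{\sigma}_n: F_n \to \Omega_s F_{n+1}$ is a motivic weak-equivalence. Applying $\Omega_s^{m+j}$ to $\wt{\sigma}_{n+j}$ produces a motivic weak-equivalence $\Omega_s^{m+j} F_{n+j} \to \Omega_s^{m+j+1} F_{n+j+1}$ between $\A^1$-B.G. pointed motivic spaces, which Theorem~\ref{thm:A1Nis descent} promotes to an object-wise weak-equivalence. Composing these for $j = 0, \ldots, p-1$ gives (1). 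Part (2) follows at once: each level of $\Omega_s^m F\{n\}$ is motivic fibrant by the opening observation, and its adjoint bonding maps are object-wise (hence motivic) weak-equivalences by (1), so the $S^1$-stable motivic fibrancy criterion of \S~\ref{subsubsection:MMSpec} is satisfied.

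For (3), both $E$ and $F$ are $\A^1$-B.G. presheaves of spectra (the latter by the opening observation), so Theorem~\ref{thm:A1Nis descent} shows that the motivic weak-equivalence $f: E \to F$ is an object-wise weak-equivalence of presheaves of spectra; that is, for each $U \in \Sm_S$, the map $f(U): E(U) \to F(U)$ is a stable weak-equivalence of ordinary spectra. The hypothesis makes $E(U)$ an $\Omega$-spectrum above level $m$, and (1) applied at $U$ makes $F(U)$ an $\Omega$-spectrum. The standard fact that a stable equivalence between two spectra that are $\Omega$-spectra above a common level $m$ restricts to level-wise weak-equivalences above $m$ (because in that range $\pi_q$ of the $n$-th space already coincides with the corresponding stable homotopy group of the ambient spectrum) then yields the desired conclusion. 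The step requiring most care is verifying that $\Omega_s$ preserves the motivic fibrancy and hence $\A^1$-B.G.\ property needed to iterate Theorem~\ref{thm:A1Nis descent} in (1), but this is handled by its being a right Quillen functor.
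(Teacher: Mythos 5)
Your proposal is correct and follows essentially the same route as the paper: motivic fibrancy of the levels of $F$ plus the fact that motivic weak-equivalences between motivic fibrant (hence $\A^1$-B.G.) pointed motivic spaces are object-wise gives (1), the fibrancy criterion for motivic spectra gives (2), and Theorem~\ref{thm:A1Nis descent} together with the comparison of level-wise and stable homotopy groups for $\Omega_s$-spectra (above level $m$) gives (3). The only cosmetic difference is that you promote motivic weak-equivalences between fibrant spaces to object-wise ones via the descent theorem, where the paper invokes Lemma~\ref{lem:A1Nis-fib pi} directly; the content is the same.
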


\begin{proof}
A motivic spectrum is $S^1$-stable motivic fibrant if and only if it is level-wise motivic fibrant and a motivic $\Omega_s$-spectrum. Thus, each $F_n \in \Spc_{\bullet} (S)$ is motivic fibrant. Since $S^1$ is cofibrant, we see that each $\Omega^m_sF_n$ is also motivic fibrant and the map $F_n  \to \Omega_s F_{n+1}$ is a motivic weak-equivalence. In particular, the map ${\bf R}\Omega^m_s F_n \to {\bf R}\Omega^{m+p}_sF_{n+p}$ is a motivic isomorphism. Since each $\Omega^m_sF_n$ is motivic fibrant, each map $\Omega^m_s F_n \to \Omega^{m+p}_sF_{n+p}$ is a motivic weak-equivalence for $m,n,p \ge 0$. By Lemma~\ref{lem:A1Nis-fib pi}, this map is an object-wise weak-equivalence, proving (1). Since each $\Omega^m_sF_n$ is motivic fibrant and the map $\Omega^m_sF_{n+p} \to \Omega^{m+1} _s F_{n+p+1}$ is a motivic weak-equivalence, it follows that $\Omega^m_sF\{n\}$ is $S^1$-stable motivic fibrant, proving $(2)$. For $(3)$, we first apply Theorem~\ref{thm:A1Nis descent} to deduce that $E \to F$ is an object-wise stable weak-equivalence. For $n > m, p \ge 0$ and $X \in \Sm_{S}$, we get isomorphisms $\pi_p\left(E_n(X)\right)  {\simeq}^1  \colim_{q }  \pi_{p+q}\left(E_{n+q}(X)\right)    \simeq  \pi_{p-n}\left(E(X)\right)  \simeq  \pi_{p-n}\left(F(X)\right)    {\simeq}^2   \pi_p\left(F_n(X)\right),$
where ${\simeq}^1$ holds because $E$ is an object-wise $\Omega_s$-spectrum above level $m$, and ${\simeq}^2$ holds because $F$ is an object-wise $\Omega_s$-spectrum.
\end{proof}

\subsection{$\mathbb{A}^1$-B.G. property of motivic spaces and motivic spectra}\label{sec:A1BG for spectra}
We study the $\A^1$-B.G. property of $E \in \Spt(\C)$ in terms of the property of its levels. 
Given any $E \in \Spc_{\bullet}(S)$,  $K \in \Spc_{\bullet}$ and $U \in \Sm_S$, there is an isomorphism ${\mathcal{H}om}_{\bullet} (K, E)(U) \simeq \Hom_{\bullet}(K, E(U))$ in $\Spc_{\bullet}$. Thus, we have $(\Omega_s E)(U) \simeq \Omega_s (E(U))$. Since $\Hom_{\bullet}(S^1, -)$ preserves weak-equivalences and fibration sequences in $\Spc_{\bullet}$, we deduce:

\begin{cor}\label{cor:A1BGOmega 1}
The functor $\Omega_s(-)$ preserves object-wise weak-equivalences, B.G. property, and $\A^1$-weak-invariance of $\Spc_{\bullet} (S)$. It preserves motivic weak-equivalences of $\A^1$-B.G. pointed motivic spaces. If $E \in \Spc_{\bullet}(\C)$ is $\A^1$-B.G., then natural map $\Omega_s E \to {\bf R}\Omega_s E$ is an isomorphism in $\sH_{\bullet}(S)$.
\end{cor}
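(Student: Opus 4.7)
The plan is to reduce every claim to the corresponding statement about the ordinary loop functor $\Omega_s = \Hom_\bullet(S^1, -)$ on $\Spc_\bullet$, via the natural identification $(\Omega_s E)(U) \simeq \Omega_s(E(U))$ of pointed simplicial sets for each $U \in \Sm_S$. This identification is immediate from Lemma~\ref{lem:simp-hom} with $K = S^1$, and it shows that $\Omega_s$ on $\Spc_\bullet(S)$ is computed object-wise.

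Granted this, preservation of object-wise weak-equivalences is automatic, since the ordinary $\Omega_s$ preserves weak-equivalences of pointed simplicial sets (replacing by a Kan fibrant model if needed). For the B.G. property, I would observe that if an elementary Nisnevich square is sent by $E$ to an object-wise homotopy Cartesian square of pointed simplicial sets, then applying $\Omega_s$ object-wise yields another homotopy Cartesian square, because $\Hom_\bullet(S^1, -)$ on $\Spc_\bullet$ preserves homotopy pullbacks (as a right Quillen adjoint, using that $S^1$ is cofibrant). Preservation of $\A^1$-weak-invariance then follows at once from preservation of object-wise weak-equivalences applied to the projection $X \times \A^1 \to X$. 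Combining the last two conclusions gives preservation of the $\A^1$-B.G. property.

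For the preservation of motivic weak-equivalences between $\A^1$-B.G. pointed motivic spaces, I would invoke the motivic descent theorem (Theorem~\ref{thm:A1Nis descent}): any such motivic weak-equivalence is already an object-wise weak-equivalence, so its image under $\Omega_s$ is also an object-wise weak-equivalence, and in particular a motivic weak-equivalence.

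For the final claim, let $E \to E'$ be a motivic fibrant replacement of an $\A^1$-B.G. space $E$. By Theorem~\ref{thm:A1Nis descent}, this map is an object-wise weak-equivalence, so $\Omega_s E \to \Omega_s E'$ is an object-wise weak-equivalence as well, hence a motivic weak-equivalence. Since $E'$ is motivic fibrant and $\Omega_s$ is right adjoint to $\Sigma_s$ in the motivic model structure, $\Omega_s E'$ is motivic fibrant and represents $\mathbf{R}\Omega_s E$ in $\sH_\bullet(S)$, so $\Omega_s E \to \mathbf{R}\Omega_s E$ is an isomorphism there. The only non-routine point I anticipate is justifying that $\Omega_s$ commutes with the homotopy Cartesian squares needed for the B.G. step; this is standard for pointed simplicial sets but deserves a brief justification rather than being invoked silently.
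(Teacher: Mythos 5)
Your proposal is correct and follows essentially the same route as the paper: the paper likewise deduces everything from the object-wise identification $(\Omega_s E)(U)\simeq \Omega_s(E(U))$ of Lemma~\ref{lem:simp-hom}, the fact that $\Hom_{\bullet}(S^1,-)$ preserves weak-equivalences and (homotopy) fibration data in $\Spc_{\bullet}$, the motivic descent theorem (Theorem~\ref{thm:A1Nis descent}) for the second statement, and a motivic fibrant replacement $E \to E'$ together with $\Omega_s E' \simeq {\bf R}\Omega_s E'$ for the last. Your closing caveat about justifying preservation of homotopy Cartesian squares is the same point the paper disposes of by citing that $\Hom_{\bullet}(S^1,-)$ preserves weak-equivalences and fibration sequences, so there is no substantive difference.
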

\begin{proof}
The first statement is obvious. The second one follows from the first and Theorem~\ref{thm:A1Nis descent}. To see the last one, take a motivic fibrant replacement $E \to E'$, apply the second one, and use the isomorphism $\Omega_s E' \simeq {\bf R} \Omega_s E'$.
\end{proof}

We say $E\in \Spt(S)$ is \emph{level-wise $\mathbb{A}^1$-B.G.} if each $E_n$ is $\mathbb{A}^1$-B.G.

\begin{cor}\label{cor:A1BGOmega 4} Let $E \to F$ be a level-wise motivic weak-equivalence of level-wise $\mathbb{A}^1$-B.G. motivic spectra. If $E$ is a motivic $\Omega_s$-spectrum, then so is $F$. 
\end{cor}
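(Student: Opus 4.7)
The plan is to exploit the naturality square coming from the bonding maps and then apply the $2$-out-of-$3$ axiom for motivic weak-equivalences. Concretely, for each $n \ge 0$, the morphism of motivic spectra $f : E \to F$ gives a commutative diagram
\[
\xymatrix{
E_n \ar[r]^-{\wt{\sigma}_n^E} \ar[d]_{f_n} & \Omega_s E_{n+1} \ar[d]^{\Omega_s f_{n+1}} \\
F_n \ar[r]_-{\wt{\sigma}_n^F} & \Omega_s F_{n+1}.
}
\]
By hypothesis, $\wt{\sigma}_n^E$ is a motivic weak-equivalence (since $E$ is a motivic $\Omega_s$-spectrum) and $f_n$ is a motivic weak-equivalence between $\mathbb{A}^1$-B.G. pointed motivic spaces.

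The main point, and the only real input, is that the right vertical arrow $\Omega_s f_{n+1}$ is also a motivic weak-equivalence. This is exactly what Corollary~\ref{cor:A1BGOmega 1} provides: the functor $\Omega_s(-)$ preserves motivic weak-equivalences between $\mathbb{A}^1$-B.G. pointed motivic spaces, and both $E_{n+1}$ and $F_{n+1}$ are $\mathbb{A}^1$-B.G. by the level-wise $\mathbb{A}^1$-B.G. assumption. Once $\Omega_s f_{n+1}$ and $\wt{\sigma}_n^E$ and $f_n$ are all known to be motivic weak-equivalences, the $2$-out-of-$3$ axiom applied to the above square forces $\wt{\sigma}_n^F: F_n \to \Omega_s F_{n+1}$ to be a motivic weak-equivalence as well. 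Since this holds for every $n \ge 0$, we conclude that $F$ is a motivic $\Omega_s$-spectrum.

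There is no genuine obstacle here: the argument is a formal diagram chase whose substantive content has already been absorbed into Corollary~\ref{cor:A1BGOmega 1} (which in turn rested on the motivic descent theorem, Theorem~\ref{thm:A1Nis descent}). The only thing to watch is that the $\mathbb{A}^1$-B.G. hypothesis is used in two places: first, to invoke Corollary~\ref{cor:A1BGOmega 1} in order to pass from $f_{n+1}$ being a motivic weak-equivalence to $\Omega_s f_{n+1}$ being one, and second, implicitly to make the use of $2$-out-of-$3$ meaningful in the motivic model structure. No further choices of fibrant replacements or stalk-wise comparisons are needed.
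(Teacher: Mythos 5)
Your proof is correct and is essentially the paper's argument: the paper declares the corollary an immediate consequence of Theorem~\ref{thm:A1Nis descent} and Corollary~\ref{cor:A1BGOmega 1}, which is precisely your combination of the naturality square for the adjoint bonding maps, Corollary~\ref{cor:A1BGOmega 1} applied to $\Omega_s f_{n+1}$, and the $2$-out-of-$3$ axiom. One small correction: $2$-out-of-$3$ for motivic weak-equivalences is a model-category axiom and needs no $\A^1$-B.G. hypothesis, so that hypothesis enters only through Corollary~\ref{cor:A1BGOmega 1}.
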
 
\begin{proof}
This is an immediate consequence of Theorem~\ref{thm:A1Nis descent} and Corollary~\ref{cor:A1BGOmega 1}.
\end{proof}

\begin{lem}\label{lem:A1BGOmega spt}
Let $f:E \to F$ be a morphism of level-wise $\mathbb{A}^1$-B.G. motivic $\Omega_s$-spectra on $\Sm_S$. Then $f$ is an $S^1$-stable motivic weak-equivalence if and only if each $f_n: E_n \to F_n$ is an object-wise weak-equivalence.
\end{lem}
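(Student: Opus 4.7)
The plan is to prove the two implications separately. The ``if'' direction is straightforward: if each $f_n$ is an object-wise weak-equivalence of pointed motivic spaces, then for each $U \in \Sm_S$ the map $f(U) : E(U) \to F(U)$ is a level-wise weak-equivalence of spectra, hence a stable weak-equivalence. Therefore $f$ is an object-wise weak-equivalence of presheaves of spectra, which is in particular an $S^1$-stable motivic weak-equivalence.

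For the converse, suppose $f$ is an $S^1$-stable motivic weak-equivalence. My first step will be to upgrade the motivic $\Omega_s$-spectrum hypothesis on $E$ and $F$ to an object-wise one. For each $n \ge 0$, the adjoint structure map $E_n \to \Omega_s E_{n+1}$ is a motivic weak-equivalence of pointed motivic spaces. By Corollary~\ref{cor:A1BGOmega 1}, the functor $\Omega_s$ preserves both the B.G.\ property and $\A^1$-weak-invariance, so $\Omega_s E_{n+1}$ is again $\A^1$-B.G. Since both source and target are $\A^1$-B.G., Theorem~\ref{thm:A1Nis descent} shows that $E_n \to \Omega_s E_{n+1}$ is in fact an object-wise weak-equivalence. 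Hence $E$ is an object-wise $\Omega_s$-spectrum, and similarly for $F$.

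Next, I will observe that $E$ and $F$ are $\A^1$-B.G.\ as presheaves of spectra on $\Sm_S$: for an elementary Nisnevich square the induced square of spectra $E(\cdot)$ is homotopy Cartesian because it is so level-wise (using the B.G.\ hypothesis on each $E_n$), and the $\A^1$-weak-invariance of the presheaf of spectra $E$ reduces to that of each $E_n$. Applying Theorem~\ref{thm:A1Nis descent} to the $S^1$-stable motivic weak-equivalence $f : E \to F$ of $\A^1$-B.G.\ presheaves of spectra then yields that $f$ is an object-wise weak-equivalence of presheaves of spectra.

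To conclude, fix $U \in \Sm_S$. By the previous paragraph, $f(U) : E(U) \to F(U)$ is a stable weak-equivalence, and by the upgrade step both $E(U)$ and $F(U)$ are $\Omega$-spectra. The standard identification $\pi_k(X_n) \simeq \pi_{k-n}^{\mathrm{st}}(X)$ for an $\Omega$-spectrum $X$ then forces each $f_n(U)$ to be a weak-equivalence of pointed simplicial sets, i.e.\ each $f_n$ is an object-wise weak-equivalence. The main technical point is the upgrade from motivic to object-wise $\Omega_s$-spectrum, which depends crucially on the preservation of the $\A^1$-B.G.\ property under $\Omega_s$ together with the motivic descent theorem; once this is in place, the rest of the argument is a direct application of Theorem~\ref{thm:A1Nis descent} and a standard fact about $\Omega$-spectra.
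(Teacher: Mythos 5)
Your argument is correct, but it is organized rather differently from the paper's. The paper works pointwise in the homotopy category: using Corollary~\ref{cor:Descent-CR-1}, the motivic $\Omega_s$-spectrum hypothesis, Corollary~\ref{cor:A1BGOmega 1}, adjunction and ~\eqref{eqn:Lim-iso}, it identifies $\pi_p(E_n(U))$ with $[\Sigma^{\infty}_s U_{+}[p-n], E]_{\A^1}$ (and likewise for $F$), so that the stable motivic weak-equivalence $f$ immediately induces isomorphisms on all the $\pi_p(E_n(U))$. You instead first upgrade the structure maps to object-wise weak-equivalences via Corollary~\ref{cor:A1BGOmega 1} and the unstable case of Theorem~\ref{thm:A1Nis descent}, then show that $E$ and $F$ are $\A^1$-B.G.\ as presheaves of spectra, apply the spectrum-level case of Theorem~\ref{thm:A1Nis descent} to get an object-wise stable equivalence, and finish with the standard $\Omega$-spectrum argument; in effect you establish the content of Corollary~\ref{cor:motivic descent S^1 version} first and deduce the lemma from it, reversing the paper's logical order, which is legitimate since none of the results you invoke depend on the present lemma. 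Your route buys a more structural by-product (the spectrum-level $\A^1$-B.G.\ property of a level-wise $\A^1$-B.G.\ motivic $\Omega_s$-spectrum), while the paper's route avoids both the object-wise upgrade and any comparison of level-wise with stable homotopy pullbacks. Two steps in your write-up deserve an explicit line of justification, though neither is a gap: (i) that a level-wise homotopy Cartesian square of presheaves of spectra is homotopy Cartesian in the stable sense (e.g.\ because stable homotopy fibers may be computed level-wise and level-wise equivalences are stable equivalences); and (ii) the identification $\pi_k(E_n(U)) \simeq \pi_{k-n}(E(U))$ for an object-wise $\Omega_s$-spectrum, which is exactly the isomorphism ${\simeq}^1$ used in the proof of Corollary~\ref{cor:A1BG consequence}(3) and can simply be cited from there.
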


\begin{proof}
Suppose that $f:E \to F$ is an $S^1$-stable motivic weak-equivalence. Let $n, p  \ge 0$ and $U \in \Sm_S$. Since $E$ and $F$ are level-wise $\mathbb{A}^1$-B.G., by Corollary \ref{cor:Descent-CR-1}, $\pi_p(E_n(U)) \simeq  [S^p \wedge U_{+}, E_n]_{\A^1}    {\simeq}^1   [S^p \wedge U_{+}, \Omega_s^{m-n}E_{m}]_{\A^1}
 {\simeq}^{2}   [S^p \wedge U_{+}, {\bf R}\Omega_s^{m-n}E_{m}]_{\A^1}  {\simeq}^{3}   [S^{m+p-n}  \wedge U_{+}, E_m]_{\A^1}$, where ${\simeq}^{1}$ holds because $E$ is a motivic $\Omega_s$-spectrum, ${\simeq}^{2}$ holds by Corollary~\ref{cor:A1BGOmega 1}, ${\simeq}^3$ holds by the adjointness. But $m \gg 0$ is arbitrary so $[S^{m+p-n}  \wedge U_{+}, E_m]_{\A^1} = \colim_{m}[S^{m+p-n}  \wedge U_{+}, E_m]_{\A^1}$, which is $ [\Sigma^{\infty}_sU_{+}[p-n], E]_{\A^1}$ by \eqref{eqn:Lim-iso}. Similarly, $\pi_p(F_n(U)) \simeq [\Sigma^{\infty}_sU_{+}[p-n], F]_{\A^1}$. Since $f$ is an $S^1$-stable motivic weak-equivalence, we deduce that the map $f_n: E_n \to F_n$ is an object-wise weak-equivalence. The other direction is obvious.
\end{proof}

\begin{cor}\label{cor:motivic descent S^1 version}Every level-wise $\A^1$-B.G. motivic $\Omega_s$-spectrum is motivic quasi-fibrant.
\end{cor}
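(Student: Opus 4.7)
The plan is to reduce the corollary to a direct application of Lemma \ref{lem:A1BGOmega spt}. Concretely, I take any motivic fibrant replacement $f : E \to F$ in the $S^1$-stable motivic model structure on $\Spt(S)$ and aim to show that $f$ is an object-wise weak-equivalence of motivic spectra. Since $f$ is by construction an $S^1$-stable motivic weak-equivalence and $E$ satisfies the hypothesis of Lemma \ref{lem:A1BGOmega spt}, the whole task is to verify that $F$ also satisfies those hypotheses, namely that $F$ is a level-wise $\A^1$-B.G. motivic $\Omega_s$-spectrum.

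For this, I invoke the characterization of $S^1$-stable motivic fibrant spectra recalled in Section \ref{subsubsection:MMSpec}: $F$ is motivic fibrant iff each $F_n$ is motivic fibrant as a pointed motivic space and each $F_n \to \Omega_s F_{n+1}$ is a motivic weak-equivalence. The second condition immediately says $F$ is a motivic $\Omega_s$-spectrum. For the first, each motivic fibrant $F_n$ is in particular Nisnevich fibrant (hence B.G. by Theorem \ref{thm:Nis descent thm}) and $\A^1$-local; Lemma \ref{lem:BG A1wi A1loc} then promotes $\A^1$-locality to $\A^1$-weak-invariance for this B.G. presheaf. Thus each $F_n$ is $\A^1$-B.G., so $F$ is level-wise $\A^1$-B.G.

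Now both $E$ and $F$ are level-wise $\A^1$-B.G. motivic $\Omega_s$-spectra and $f$ is an $S^1$-stable motivic weak-equivalence between them. Lemma \ref{lem:A1BGOmega spt} then yields that each $f_n : E_n \to F_n$ is an object-wise weak-equivalence of pointed motivic spaces. Evaluating at any $U \in \Sm_S$, this means $f_n(U) : E_n(U) \to F_n(U)$ is a weak-equivalence of pointed simplicial sets for every $n$, so the map of spectra $f(U) : E(U) \to F(U)$ is a level-wise and therefore a stable weak-equivalence. Hence $f$ is an object-wise weak-equivalence of motivic spectra, proving that $E$ is motivic quasi-fibrant. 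I do not anticipate any real obstacle, as the serious content has already been absorbed into Lemma \ref{lem:A1BGOmega spt}; the corollary is essentially the repackaging of that lemma using the standard level-wise description of motivic fibrant spectra.
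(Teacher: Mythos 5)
Your proposal is correct and follows essentially the same route as the paper: take an $S^1$-stable motivic fibrant replacement $f:E\to F$, note that $F$ is automatically a level-wise motivic fibrant (hence level-wise $\A^1$-B.G.) motivic $\Omega_s$-spectrum, and then apply Lemma~\ref{lem:A1BGOmega spt} to conclude that $f$ is an object-wise weak-equivalence. The only difference is cosmetic: you justify the $\A^1$-B.G. property of the levels $F_n$ via Theorem~\ref{thm:Nis descent thm} and Lemma~\ref{lem:BG A1wi A1loc}, while the paper cites Theorem~\ref{thm:A1Nis descent} for the same fact.
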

\begin{proof}Consider an $S^1$-stable motivic fibrant replacement of the given one. Since an $S^1$-stable motivic fibrant motivic spectrum is a level-wise motivic fibrant (thus $\mathbb{A}^1$-B.G.) motivic $\Omega_s$-spectrum, this corollary holds by Lemma~\ref{lem:A1BGOmega spt} and Theorem \ref{thm:A1Nis descent}.
\end{proof}

\subsection{Motivic descent for $(s,\mathfrak{p})$-bispectra}\label{subsection:A1BG bispec}
Given an open or a closed immersion of schemes $A \subseteq B$ in $\Sm_{S}$, let $\Omega_{B/A}(-)$ be the functor $E \mapsto \Omega_{B/A}E = (\Omega_{B/A}E_0, \Omega_{B/A} E_1, \cdots )$ on $\Spt(S)$, where $\Omega_{B/A}F = {\sH}om_{\bullet}(B/A, F)$ is the object-wise fiber of the map ${\sH}om(B, F) \to {\sH}om(A, F)$ (\emph{cf.} \cite[Corollary~1.10]{Jardine}) for $F \in \Spc_{\bullet} (S)$. There is an object-wise fiber sequence of presheaves $\Omega_{B/A}E \to E_B \to E_A,$
where $E_B(X) : = E(B \times X) = {\sH}om(B, E)(X)$. Recall (\emph{cf.} \cite[Corollary~3.2]{Jardine}) that given an object-wise fiber sequence as above, the map ${E_B}/{(\Omega_{B/A}E)} \to E_A$ is an object-wise $S^1$-stable weak-equivalence. The natural isomorphism $S^1 \wedge E_X \to (S^1\wedge E)_X$, for $E \in \Spc_{\bullet}(S)$ and $X \in \Sm_{S}$, and the above cofiber sequence give a natural map $S^1 \wedge \Omega_{B/A} E_n \to \Omega_{B/A} (S^1 \wedge E_n)$ for $E \in \Spt(S)$. Composed with the bonding map $\Omega_{B/A} (S^1 \wedge E_n) \to \Omega_{B/A} (E_{n+1})$, we see that $E \to \Omega_{B/A} E$ is an endo-functor on $\Spt(S)$. There is a natural bijection $\Hom_{\Spt(S)}(\Sigma_{B/A} E, F) \simeq \Hom_{\Spt(S)}(E, \Omega_{B/A} F)$. The following analogue of Corollary \ref{cor:A1BGOmega 1} for motivic spectra is immediate from Theorem~\ref{thm:A1Nis descent} and the above object-wise cofiber sequence.

\begin{lem}\label{lem:A1BGOmega-Spec 1}
The functor $\Omega_{B/A} (-)$ preserves object-wise weak-equivalences, B.G. property, and $\A^1$-weak-invariance of motivic spectra. It preserves motivic weak-equivalences of $\A^1$-B.G. motivic spectra. If $E \in \Spt (S)$ is $\A^1$-B.G., then the natural map $\Omega_{B/A} E \to {\bf R}\Omega_{B/A} E$ is an isomorphism in $\sS\sH_{S^1}(S)$. If $f: E \to F$ is an $S^1$-stable motivic weak-equivalence of $\A^1$-B.G. motivic spectra, then $\Omega_{B/A} f: \Omega_{B/A} E \to \Omega_{B/A} F$ is also an $S^1$-stable motivic weak-equivalence.
\end{lem}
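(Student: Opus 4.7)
The plan is to exploit the object-wise homotopy fiber sequence \eqref{eqn:Hom-spec**}, $\Omega_{B/A}E \to E_B \to E_A$, and reduce every claim to the analogous assertion for the evaluation functors $E \mapsto E_Y := \sH om(Y, E)$ with $Y \in \Sm_S$, combined with the closure of each of the three relevant classes of presheaves of spectra (object-wise equivalent, B.G., $\A^1$-weak-invariant) under taking the object-wise homotopy fiber of a morphism.

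First I would verify the ``fixed $Y$'' statements: for any $Y \in \Sm_S$, the functor $E \mapsto E_Y$ preserves object-wise weak-equivalences (trivially, since $E_Y(X) = E(Y \times_S X)$ is mere reindexing on $\Sm_S$), preserves the B.G. property (because the base change of an elementary Nisnevich square \eqref{eqn:elementary square} by $Y \times_S -$ is again an elementary Nisnevich square in $\Sm_S$, so that applying $E_Y$ to the original square equals applying $E$ to the base-changed one), and preserves $\A^1$-weak-invariance (because the projection $Y \times_S X \times \A^1 \to Y \times_S X$ induces the required equivalence). Next I would check that the three classes above are stable under object-wise homotopy fibers of morphisms of presheaves of spectra: for object-wise weak-equivalences this is the five-lemma on long exact sequences of stable homotopy groups; for the B.G. property it is the stability of homotopy Cartesian squares of spectra under taking the level-wise homotopy fiber of a morphism of squares; for $\A^1$-weak-invariance it is the same stability applied to the morphism of object-wise fiber sequences induced by the $\A^1$-projection $X \times \A^1 \to X$. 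Applied with $Y = A, B$ to the fiber sequence \eqref{eqn:Hom-spec**}, these two steps together establish the first sentence of the lemma.

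The remaining three assertions then follow by a single appeal to the motivic descent theorem \thmref{thm:A1Nis descent}. Given a motivic (i.e.\ $S^1$-stable motivic) weak-equivalence $f: E \to F$ between $\A^1$-B.G. motivic spectra, \thmref{thm:A1Nis descent} upgrades $f$ to an object-wise weak-equivalence, so the first sentence implies that $\Omega_{B/A}f$ is an object-wise weak-equivalence and in particular an $S^1$-stable motivic weak-equivalence; this proves the claim about the preservation of motivic weak-equivalences and its restatement in the last sentence simultaneously. For the derived-functor claim, I would take a motivic fibrant replacement $\iota: E \to \wt E$; since $E$ is $\A^1$-B.G., \thmref{thm:A1Nis descent} makes $\iota$ an object-wise weak-equivalence, hence so is the natural comparison map $\Omega_{B/A}E \to \Omega_{B/A}\wt E = \mathbf{R}\Omega_{B/A}E$, which is therefore an isomorphism in $\sS\sH_{S^1}(S)$.

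The only non-formal step is the closure of the B.G. property under object-wise homotopy fibers: given a morphism of distinguished Nisnevich squares on which the source and target of the fiber sequence become homotopy Cartesian, one needs to verify that the square obtained by taking level-wise homotopy fibers is again homotopy Cartesian. This is a slightly delicate instance of the $3 \times 3$ lemma for spectra (equivalently, a diagram chase on the two associated long exact sequences of homotopy groups), and it is the only place where one uses something genuinely beyond formal adjunctions and \thmref{thm:A1Nis descent}.
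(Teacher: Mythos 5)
Your proposal is correct and is essentially the paper's argument: the paper states the lemma as immediate from the object-wise fiber sequence \eqref{eqn:Hom-spec**} together with Theorem~\ref{thm:A1Nis descent}, and your write-up simply fills in those details (evaluation functors $E\mapsto E_Y$ preserve the three properties, the properties pass to fibers, and the motivic statements follow by descent). Note only that \eqref{eqn:Hom-spec**} is a strict object-wise fiber sequence, but treating it as giving long exact sequences of stable homotopy groups, as you do, is justified by the paper's appeal to the fact that ${E_B}/{(\Omega_{B/A}E)} \to E_A$ is an object-wise stable weak-equivalence.
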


Recall from \S\ref{subsubsection: bispec}, \S\ref{subsubsection:T-spec-M} that an $(s,\mathfrak{p})$-bispectrum $E = (E_{m,n})_{m,n \ge 0}$ gives a sequence $ (E_0, E_1, \cdots )$ of motivic spectra with bonding maps $\Sigma_TE_n = T \wedge E_n \to E_{n+1}$.

\begin{defn}\label{defn:BG bispec}
For $E \in \Spt_{(s, \mathfrak{p})}(S)$, we say that $E$ is a \emph{motivic $\Omega_T$-bispectrum} if the adjoint maps $ E_n \to \Omega_T E_{n+1}$ are motivic weak-equivalences in $\Spt(S)$ for $n \ge 0$. We say that $E$ is \emph{$\mathbb{A}^1$-B.G.} if each $E_n$ is an $\mathbb{A}^1$-B.G. motivic spectrum for $n \geq 0$.
\end{defn}

\begin{thm}\label{thm:descent for bispectrum}
Let $f: E \to F$ be a stable motivic weak-equivalence of $\mathbb{A}^1$-B.G. motivic $\Omega_T$-bispectra on $\Sm_S$. Then $f$ is a $T$-level-wise object-wise weak-equivalence, i.e., each $f_n : E_n \to F_n$ is an object-wise weak-equivalence.
\end{thm}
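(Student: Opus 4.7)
The plan is to reduce the statement to the Nisnevich descent theorem (Theorem~\ref{thm:Nis descent thm}) by showing that each $f_n \colon E_n \to F_n$ is a Nisnevich local weak-equivalence of B.G. motivic spectra. The key technical step is to identify the stable homotopy presheaves $U \mapsto \pi_p(E_n(U))$ on $\Sm_S$ with the bi-graded homotopy presheaves $\pi_{p-2n,-n}(E)$ whose stalk-wise behavior encodes the assumption that $f$ is a stable motivic weak-equivalence of $(s,p)$-bispectra.

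First I would exploit the $\Omega_T$-bispectrum assumption. The bonding adjoints $\wt{\tau}\colon E_n \to \Omega_T E_{n+1}$ are $S^1$-stable motivic weak-equivalences, so iterating yields motivic weak-equivalences $E_n \to \Omega_T^m E_{n+m}$ for every $m \ge 0$. Applying $\Hom_{\mathcal{SH}_{S^1}(S)}(\Sigma_s^p \Sigma_s^{\infty} U_+, -)$ and the adjunction $\Sigma_T \dashv \Omega_T$ in $\mathcal{SH}_{S^1}(S)$, one obtains, for every $m \ge 0$, an isomorphism
\[
[\Sigma_s^p \Sigma_s^{\infty} U_+, E_n]_{\A^1} \xrightarrow{\simeq} [\Sigma_s^p \Sigma_T^m \Sigma_s^{\infty} U_+, E_{n+m}]_{\A^1}.
\]
Hence the colimit defining $\pi_{p-2n,-n}(E)(U)$ in~\eqref{eqn:bihom-presheaf} stabilizes and (after matching $s$-degrees $p-2q=p$ and $T$-degrees $q+m=m-n$, which forces $q=-n$) gives a natural isomorphism of presheaves $\pi_{p-2n,-n}(E)(U) \simeq [\Sigma_s^p \Sigma_s^{\infty} U_+, E_n]_{\A^1}$. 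Next, since $E_n$ is $\A^1$-B.G., Corollary~\ref{cor:Descent-CR-1} identifies $[\Sigma_s^{\infty} U_+[p], E_n]_{\A^1}$ with $\pi_p(E_n(U))$. Combining these, and arguing identically for $F$, one obtains natural isomorphisms of presheaves on $\Sm_S$,
\[
\pi_p(E_n(-)) \simeq \pi_{p-2n,-n}(E), \qquad \pi_p(F_n(-)) \simeq \pi_{p-2n,-n}(F),
\]
compatible with $f$.

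Since $f$ is a stable motivic weak-equivalence, $\pi_{p-2n,-n}(f)$ is a stalk-wise isomorphism of abelian presheaves on $(\Sm_S)_{Nis}$ for all $p \in \Z$ and all $n \ge 0$. Via the identifications above, this is exactly the statement that $f_n$ induces a stalk-wise isomorphism on every stable homotopy presheaf, i.e., $f_n \colon E_n \to F_n$ is a Nisnevich local weak-equivalence of motivic spectra. Because $E_n$ and $F_n$ are $\A^1$-B.G., hence B.G., Theorem~\ref{thm:Nis descent thm} upgrades this to an object-wise weak-equivalence, completing the proof.

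The main obstacle is the index bookkeeping needed to match~\eqref{eqn:bihom-presheaf} with $[\Sigma_s^p \Sigma_s^{\infty} U_+, E_n]_{\A^1}$; the conceptual point, however, is simply that the $\Omega_T$-bispectrum condition together with Lemma~\ref{lem:A1BGOmega-Spec 1} forces the transition maps in the defining colimit to be isomorphisms, so that the bi-graded homotopy presheaves of $E$ collapse to the ordinary stable homotopy presheaves of each individual level $E_n$. Once that identification is in place, the theorem is a direct invocation of the Nisnevich descent theorem established earlier.
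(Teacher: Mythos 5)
Your proposal is correct and follows essentially the same route as the paper: use the $\A^1$-B.G. hypothesis (Corollary~\ref{cor:Descent-CR-1}) and the $\Omega_T$-bispectrum condition together with Lemma~\ref{lem:A1BGOmega-Spec 1} to identify $\pi_p(E_n(-))$ with $\pi_{p-2n,-n}(E)$, deduce that each $f_n$ is a Nisnevich local weak-equivalence, and then invoke descent. The only cosmetic difference is that you finish with the Nisnevich descent theorem (Theorem~\ref{thm:Nis descent thm}) for B.G. presheaves, whereas the paper passes through the motivic descent theorem (Theorem~\ref{thm:A1Nis descent}); both are equivalent here since $\A^1$-B.G. implies B.G.
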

\begin{proof}
Let $n \ge 0$, $p \in \Z$ and $U \in \Sm_S$. Since $E$ is ($T$-level-wise) $\A^1$-B.G., apply Corollary~\ref{cor:Descent-CR-1} to get $\pi_p(E_n)(U) \simeq  [\Sigma^{\infty}_s U_{+}[p], E_n]_{\A^1}   {\simeq}^1  [\Sigma^{\infty}_s U_{+}[p], \Omega^{m-n}_TE_{m}]_{\A^1},$ where $\simeq^1$ holds for $E$ is a motivic $\Omega_T$-bispectrum. This equals $[\Sigma^{\infty}_s  U_{+}[p], {\bf R}\Omega^{m-n}_TE_{m}]_{\A^1}$ by Lemma~\ref{lem:A1BGOmega-Spec 1}. By adjointness it equals $[\Sigma_T^{m-n} \Sigma_s^{p} \Sigma^{\infty}_s U_{+}, E_m]_{\A^1}$. Since $m \gg 0$ is arbitrary, we have $[\Sigma_T^{m-n} \Sigma_s^{p} \Sigma^{\infty}_s U_{+}, E_m]_{\A^1} = \colim_{m}\ [\Sigma_T^{m-n} \Sigma_s^{p} \Sigma^{\infty}_s U_{+}, E_m]_{\A^1}$, which is $\pi_{p-n, -n}(E)(U)$ by definition in \S \ref{subsection:SHC}. Similarly, $\pi_p(F_n(U)) \simeq \pi_{p-n, -n}(F)(U)$.  
Now, by our assumptions, the map $\pi_p(E_n) \to \pi_p(F_n)$ induces an isomorphism of the associated Nisnevich sheaves so that $f_n: E_n \to F_n$ is a Nisnevich local weak-equivalence, and hence an $S^1$-stable motivic weak-equivalence. Since these are $\A^1$-B.G. motivic spectra, by Theorem~\ref{thm:A1Nis descent} each $f_n$ is an object-wise weak-equivalence.
\end{proof}

\begin{cor}For $E \in \Spt_{(s,\mathfrak{p})} (S)$, let $f: E \to E'$ be a stable motivic fibrant replacement. Then, $E$ is an $\mathbb{A}^1$-B.G. motivic $\Omega_T$-bispectrum if and only if $f$ is a $T$-level-wise object-wise weak-equivalence.
\end{cor}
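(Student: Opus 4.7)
The plan is to reduce the corollary to Theorem~\ref{thm:descent for bispectrum} by first checking that the stable motivic fibrant replacement $E'$ is itself automatically an $\A^1$-B.G. motivic $\Omega_T$-bispectrum; once this is in hand, both implications fall out rapidly.

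First I verify the claim about $E'$. Since $E'$ is stable motivic fibrant, the characterization recalled at the end of \S~\ref{subsubsection: bispec} gives that each $E'_n$ is $S^1$-stable motivic fibrant and that the adjoint bonding maps $\wt{\tau}': E'_n \to \Omega_T E'_{n+1}$ are $S^1$-stable motivic weak-equivalences; the latter is precisely the motivic $\Omega_T$-bispectrum condition of Definition~\ref{defn:BG bispec}. For the $\A^1$-B.G. condition, each $E'_n$ is motivic fibrant as a motivic spectrum, hence Nisnevich fibrant (so B.G.) and $\A^1$-local; Lemma~\ref{lem:BG A1wi A1loc}(2) then upgrades $\A^1$-local to $\A^1$-weak-invariant, so each $E'_n$ is $\A^1$-B.G.

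The forward direction is now immediate: if $E$ is itself an $\A^1$-B.G. motivic $\Omega_T$-bispectrum, then the stable motivic weak-equivalence $f: E \to E'$ satisfies the hypotheses of Theorem~\ref{thm:descent for bispectrum}, whence $f$ is a $T$-level-wise object-wise weak-equivalence. For the reverse direction, suppose $f$ is a $T$-level-wise object-wise weak-equivalence. Object-wise weak-equivalences of presheaves of spectra preserve the B.G. property (homotopy Cartesian squares of spectra are stable under pointwise weak equivalence at each vertex) and $\A^1$-weak-invariance, so each $E_n$ inherits the $\A^1$-B.G. property from $E'_n$. For the $\Omega_T$-bispectrum condition on $E$, Lemma~\ref{lem:A1BGOmega-Spec 1} shows that $\Omega_T f_{n+1}$ is again an object-wise weak-equivalence; combined with the object-wise map $f_n$ and the motivic weak-equivalence $\wt{\tau}'$ established above, 2-out-of-3 in the commuting bonding square forces $\wt{\tau}: E_n \to \Omega_T E_{n+1}$ to be a motivic weak-equivalence.

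The substantive content of the argument lies in Theorem~\ref{thm:descent for bispectrum} together with Lemma~\ref{lem:A1BGOmega-Spec 1}; the only mild subtlety to keep straight is that stable motivic fibrancy of $E'$ delivers $\A^1$-locality at each level rather than the $\A^1$-weak-invariance demanded by Definition~\ref{def:A1BG}, a gap that is bridged by invoking Lemma~\ref{lem:BG A1wi A1loc}(2).
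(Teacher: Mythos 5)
Your proof is correct and follows essentially the same route as the paper: the forward direction via Theorem~\ref{thm:descent for bispectrum} (after noting that a stable motivic fibrant bispectrum is automatically an $\A^1$-B.G. motivic $\Omega_T$-bispectrum), and the backward direction by transferring the level-wise $\A^1$-B.G. property along the object-wise weak-equivalences and using Lemma~\ref{lem:A1BGOmega-Spec 1} with 2-out-of-3 for the $\Omega_T$-condition. The only cosmetic difference is that you verify the inheritance of the $\A^1$-B.G. property directly, where the paper cites Theorem~\ref{thm:A1Nis descent}; the content is the same.
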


\begin{proof}
The forward direction is obvious by Theorem \ref{thm:descent for bispectrum}. For the backward direction, note that each level $E_n \to E'_n$ is an object-wise weak-equivalence, with $E'_n$ is motivic fibrant, so that each $E_n$ is $\mathbb{A}^1$-B.G. by Theorem \ref{thm:A1Nis descent}.
It only remains to see that $E$ is a motivic $\Omega_T$-bispectrum. This follows from Lemma \ref{lem:A1BGOmega-Spec 1}.
\end{proof}

\section{Singular semi-topologization}\label{sec:sst}

\subsection{Definition and basic properties}\label{sec:recall sst}
From now, we take $S= \Spec (\mathbb{C})$. For a complex algebraic variety $U$, let $U^{an}$ be its associated complex analytic space. 
We recall the semi-topologization of Friedlander-Walker from \cite[Definition 10]{FW Handbook}.

\subsubsection{Realization and diagonal of a simplicial spectrum}\label{subsubsection:Total}
We briefly review the diagonal and the realization of a simplicial spectrum. For a bisimplicial set $A_{* *}$, the \emph{realization} $|A|$ is the simplicial set obtained by taking the coequalizer of the diagram
$\coprod_{(\alpha: [n] \to [k]) \in \Delta^{\rm op}} \ A_n \times \Delta[k]  \rightrightarrows \coprod_{n \ge 0} \  A_n \times \Delta[n],$
where the two morphisms are $(\alpha, x, t) \mapsto (x, \alpha^*(t))$ and $(\alpha, x, t) \mapsto (\alpha_*(x), t)$. If $A_{**}$ is a simplicial object in $\Spc_{\bullet}$, then $|A|$ is obtained by replacing $A_n \times \Delta[k]$ by $A_n \wedge (\Delta[k])_{+}$ in the above. The \emph{diagonal $\diag A$} is the composite $ A_{**} \circ \delta: \Delta^{\rm op}\to  \Delta^{\rm op} \times \Delta^{\rm op} \to {\rm {\bf Set}}$. There is a natural isomorphism $\diag A \to |A|$. See \cite[Proposition~B.1]{BF}. If $E: \Delta^{\rm op} \to \Spt$ is a simplicial spectrum, its realization $|E|$ is defined as above, where $A_n \times \Delta[k]$ is replaced by $E(\Delta[n]) \wedge (\Delta[k])_{+}$. A simplicial spectrum $E$ can be seen as a sequence $(E^0_{**}, E^1_{**}, \cdots )$, where each $E^n_{**}$ is a pointed bisimplicial set, with the bonding maps $S^1 \wedge E^n_{**} \to E^{n+1}_{**}$. So, the spectrum $|E|$ has $|E|_n =|E^n_{**}|$ in $\Spc_{\bullet}$, with the bonding maps $S^1 \wedge |E^n_{**}| \to |E^{n+1}_{**}|$. The diagonal $\diag E$ of $E$ is the spectrum with $(\diag E)_n = \diag (E^n_{**})$. We have the map $S^1 \wedge E^n_{p} \to E^{n+1}_{p}$, where $E^n_{p} = \left(E(\Delta[p])\right)_n$, or, the map of pointed sets $(S^1)_i \wedge E^n_{p, i} \to E^{n+1}_{p,i}$. The maps $(S^1)_p \wedge E^n_{p, p} \to E^{n+1}_{p, p}$ give the bonding maps $S^1 \wedge (\diag E)_n \to (\diag E)_{n+1}$ of the spectrum $\diag E$. From the case of bisimplicial sets, one gets $\diag E \simeq |E|$.  If $E$ is a presheaf of simplicial spectra on $\Sch_S$ or $\Sm_S$, we define $|E|$ and $\diag E$ object-wise. Thus, for a simplicial presheaf of spectra $E$ on $\Sch_S$ or $\Sm_S$, we have $\diag E  \simeq |E|$.

\subsubsection{Semi-topologization}\label{subsection:SST-Def} 
For $T\in \mathcal{T}op$, let $(T|\Var_{\mathbb{C}})$ be the category whose objects are $(f, U)$, where $U \in \Var_{\mathbb{C}}$, and $f: T \to U^{an}$ is a continuous map. A morphism from $(f,U)$ to $(g, V)$ is a morphism $h: U \to V$ in $\Var_{\mathbb{C}}$ such that the map $h^{an}: U^{an} \to V^{an}$ satisfies $h^{an} \circ f = g$. Recall that $\Delta_{\top} ^{\bullet} = \{ \Delta_{\top} ^n \} _{n \geq 0}$ is a cosimplicial topological space with the natural cofaces $\partial ^i$ and the codegeneracies $s^i$. For $n>0$ and $0 \leq i \leq n$, define $\partial_i : (\Delta_{\top} ^n |\Var_{\mathbb{C}})^{\op} \to (\Delta_{\top} ^{n-1} |\Var_{\mathbb{C}})^{\op}$ by $(f: \Delta_{\top} ^n {\to} U^{an}) \mapsto ( f \circ \partial ^i : \Delta_{\top} ^{n-1} {\to} \Delta_{\top} ^n {\to} U^{an}).$
For $n \geq 0$ and $0 \leq i \leq n$, define $s_i : (\Delta_{\top} ^n |\Var_{\mathbb{C}})^{\op} \to 
(\Delta_{\top} ^{n+1}| \Var_{\mathbb{C}})^{\op}$ by $(f: \Delta_{\top} ^n {\to} U^{an}) \mapsto 
(f\circ s^i : \Delta_{\top} ^{n+1} {\to} \Delta_{\top} ^n {\to} 
U^{an}).$ 
Recall the following from \cite{FW Handbook}:

\begin{defn}\label{def:presst}
Let $E$ be a presheaf on $\Sch_{\mathbb{C}}$ of objects in $\Spc, \Spc_{\bullet}$, or $\Spt$. Let $X \in \Sch_{\mathbb{C}}$ and let $T\in \mathcal{T}op$. Define $E(T \times X)= \colim_{(f,U) \in (T|\Var_{\mathbb{C}})^{\op}} E(U \times X)$.  Consider $E(\Delta_{\top} ^{\bullet} \times X)=\{E (\Delta_{\top} ^n \times X)\}_{n \geq 0}$, which is a simplicial object in $\Spc, \Spc_{\bullet},$ or $\Spt$. Let $E^{\sst} (X):=|E(\Delta_{\top}^{\bullet} \times X)|$. This $E^{\sst}$ is a presheaf on $\Sch_{\mathbb{C}}$ of objects in $\Spc, \Spc_{\bullet}$, or $\Spt$, called \emph{the semi-topologization of $E$}.
\end{defn}

There is a natural morphism of presheaves $ E \to E^{\sst}$ on $\Sch_{\mathbb{C}}$, which gives a natural transformation ${\rm Id} \to (-)^{\sst}$ of functors on presheaves on $\Sch_{\mathbb{C}}$. 

\begin{lem}\label{lem:cor sub}
Let $E$ be a presheaf on $\Sch_{\mathbb{C}}$ of objects in $\Spc, \Spc_{\bullet}$, or $\Spt$. Let $X \in \Sch_{\mathbb{C}}$. Define a presheaf on $\Sch_{\mathbb{C}}$ by $E_X (U):= E( U \times X)$. Then, $(E_X)^{\sst} = (E^{\sst} )_X$. 
\end{lem}
\begin{proof}
For $U \in \Sch_{\C}$, we have $(E^{\sst})_X(U) = E^{\sst}(X \times U) = |{\{E(\Delta^{n}_{\rm top} \times X \times U)\}}_{n }| = |{\{\colim_{(f,C)} E(C\times X \times U)\}}_{n }|=|{\{\colim_{(f,C)} E_X(C\times U)\}}_{n }| = |{\{E_X(\Delta^{n}_{\rm top} \times U)\}}_{n }|$. This is by definition, $(E_X)^{\sst}(U)$.
\end{proof}

When $E$ is a presheaf on $\Sm_{{\C}}$, it is well-known that the realization of $E(\Delta^{\bullet} \times -)$ is $\mathbb{A}^1$-weak-invariant (\cite[Proposition 7.2]{FS}, \cite[p. 150]{FV}). Its semi-topological analogue also holds by \cite[Lemma 1.2]{FW0}:

\begin{thm}\label{thm:A1-invariance} Let $E$ be a presheaf on $\Sch_{\mathbb{C}}$ of objects in $\Spc, \Spc_{\bullet}$, or $\Spt$. Then, $E^{\sst}$ is $\mathbb{A}^1$-weak-invariant.
\end{thm}

\subsection{Semi-topologization and $\wedge$-product}
\label{subsection:SST-Wedge}

 Recall that for $A, B\in \Spc_{\bullet}$ (all base points are denoted by $\star$), we have $A \wedge B=(A \times B) / (A \vee B),$
where $A \vee B= (A \times \star) \cup (\star \times B)$ in $A \times B$. For two presheaves $E$ and $F$ on a category $\mathcal{C}$ of objects in $\Spc_{\bullet}$, define the presheaf $E \wedge F$ on $\mathcal{C}$ object-wise by $(E \wedge F) (U) = E (U) \wedge F(U)$, so one still has $E \wedge F = (E \times F) / (E \vee F)$. When $F$ is a presheaf of spectra on $\Sch_{\mathbb{C}}$ while $E$ is as above, we define $E \wedge F$ level-wise, namely, $E\wedge F= (E \wedge F_0, E \wedge F_1, \cdots)$.

\begin{prop}\label{prop:sst and wedge}
Let $E, F, F'$ be presheaves on $\Sch_{\mathbb{C}}$ of objects in $\Spc_{\bullet}$. Then we have the following identities: $(1)$ $(E \times F)^{\sst} = E^{\sst} \times F^{\sst}.$ $(2)$ $(E \vee F)^{\sst} = E^{\sst} \vee F^{\sst}.$ $(3)$ If $F \subset F'$, then $({F'}/F)^{\sst} = {F'}^{\sst} / F^{\sst}.$ $(4)$ $(E \wedge F)^{\sst} = E^{\sst} \wedge F^{\sst}$. $(5)$ When $E$ is as above and $F$ is a presheaf of spectra on $\Sch_{\mathbb{C}}$, we have $(E \wedge F)^{\sst} = E^{\sst} \wedge F^{\sst}$.
\end{prop}

\begin{proof} 
Let $X \in \Sch_{\mathbb{C}}$ be a fixed scheme. For (1), let $U \in \Sch_{\mathbb{C}}$. Note that $(E \times F ) (U \times X) = E(U \times X) \times F(U \times X)$.
Over the objects $(f : \Delta^n _{\top} \to U^{an})$ of the filtered category $ (\Delta^n _{\top} | \Var_{\mathbb{C}})^{\op}$, take the filtered colimit. By \cite[\S~IX.2 Theorem 1]{ML} finite limits (e.g. products) commute with filtered colimits, so that $ (E \times F) (\Delta_{\top} ^n \times X) = E (\Delta_{\top} ^n \times X) \times F( \Delta_{\top} ^n \times X).$ Taking the diagonals, we obtain (1).
For (2), for each $U \in \Sch_{\mathbb{C}}$, note that $(E\vee F)(U \times X) = \colim \{ E (U \times X) \times \star \leftarrow \star \times \star \rightarrow \star \times F(U \times X ) \}$. Take the filtered colimits over the objects $(f : \Delta^n _{\top} \to U^{an})$ of $ (\Delta^n _{\top} | \Var_{\mathbb{C}})^{\op}$. Colimits commute among themselves (see \cite[\S~IX.8]{ML}) so that $(E \vee F) (\Delta_{\top} ^n \times X) = E (\Delta_{\top} ^n \times X) \vee F (\Delta_{\top} ^n \times X)$. This implies (2), by taking the diagonals. For (3), similarly we consider instead $F' (U \times X) / F(U \times X) = \colim \{ \star \leftarrow F(U \times X) \rightarrow F' (U \times X) \}$, and repeat the same procedure. This proves (3). 
Now, (4) follows from (1) - (3). For (5), since the limits and colimits of spectra are all defined level-wise, this part follows from (4).
\end{proof}

\section{Semi-topologization of presheaves on smooth schemes}\label{sec:sst smooth}

\subsection{Artificial extension}\label{subsec:sst smooth}We discuss how one defines semi-topologization on presheaves on $\Sm_{\mathbb{C}}$. For a presheaf $E$ on $\Sch_{\mathbb{C}}$ of objects in $\Spc, \Spc_{\bullet}$, or $\Spt$, we used the categories $(\Delta_{\top} ^n | \Var_\mathbb{C})^{\op}$ to define $E^{\sst}$ in \S\ref{sec:sst}. If $E$ is defined only on $\Sm_\mathbb{C}$ \emph{a priori}, then one may either extend the functor $F$ to all of $\Sch_{\mathbb{C}}$, or shrink the indexing categories to, say, $(\Delta_{\top} ^n | \Sm_\mathbb{C})^{\op}$. Both raise some issues. Extension of $F$ from $\Sm_\mathbb{C}$ to $\Sch_{\mathbb{C}}$ is not unique. On the other hand, the inclusion $(\Delta_{\top} ^n | \Sm_{\mathbb{C}})^{\op}) \hookrightarrow (\Delta_{\top} ^n | \Var_{\mathbb{C}})^{\op}$ is not cofinal. Furthermore, the indexing categories $(\Delta_{\top}^n | \Sm_\mathbb{C})^{\op}$ are not filtered, over which the colimits have poor properties. To avoid these, we use a fixed functorial extension process to obtain a presheaf on $\Sch_{\mathbb{C}}$, and then apply the $\sst$-functor of \S\ref{sec:sst}.

Let $W \in \Sch_{\C}$. Consider the objects $(f, X)$, where $X \in \Sm_{\C}$ and $f: W \to X$ is a morphism in $\Sch_{\mathbb{C}}$. Given $(f, X)$ and $(g, Y)$, with $X, Y \in \Sm_{\C}$, a morphism $\psi$ from $(f,X)$ to $(g,Y)$ is defined to be a morphism $\psi : X \to Y$ in $\Sch_{\mathbb{C}}$ such that $\psi \circ f = g$. Let $(W|\Sm_\mathbb{C})^{\op}$ be the category of the pairs $(f, X)$ with the above morphisms. 

\begin{defn}\label{def:extension}Let $E$ be a presheaf on $\Sm_{\C}$ of objects in a cocomplete category $\mathcal{M}$. For $W \in \Sch_{\C}$, define \emph{the artificial extension} $\bar{E}$ of $E$ by $\bar{E} (W) := \colim_{(f,X) \in (W|\Sm_{\C}) ^{\op}} E(X).$
\end{defn}

If $W \in \Sm_{\C}$, then $(W|\Sm_{\C})^{\op}$ has the terminal object $({\rm Id}_W ,W)$ so that $\bar{E} (W)= E(W)$. One checks that given $\phi: W \to W'$ in $\Sch_{\C}$, the assignment $(f: W' \to X) \mapsto (f \circ \phi : W \to X)$ makes $\bar{E}$ a presheaf on $\Sch_{\C}$. One checks it defines a functor 
${\rm\bf ext}: {\rm Funct}(\Sm_{\C} ^{\op}, \mathcal{M}) \to  {\rm Funct} (\Sch_{\C} ^{\op}, \mathcal{M}).$
In the opposite direction, we have ${\rm\bf rest}: {\rm Funct} (\Sch_{\C} ^{\op}, \mathcal{M}) \to {\rm Funct }(\Sm_{\C} ^{\op}, \mathcal{M})$ and clearly ${\rm\bf rest} \circ {\rm\bf ext} = {\rm Id}$. The transformation ${\rm\bf ext} \circ {\rm\bf rest} \to {\rm Id}$ is not an isomorphism in general.

\begin{defn}\label{def:smsst}Let $E$ be a presheaf on $\Sm_{\mathbb{C}}$ of objects in $\Spc, \Spc_{\bullet}$, or $\Spt$. We define its \emph{semi-topologization} as the presheaf $\left({\rm\bf ext}(E)\right)^{\sst}|_{\Sm_{\C}} = {\bar{E}}^{\sst}|_{\Sm_{\C}} = {\rm\bf rest} \circ {\sst} \circ {\rm\bf ext}(E)$ on $\Sm_{\C}$. The resulting presheaf is denoted by $E^{\sst}$. 
\end{defn}
The semi-topologization defines a natural transformation of functors ${\rm Id} \to (-)^{\sst}$ on presheaves on $\Sm_{\C}$. Immediately from Theorem~\ref{thm:A1-invariance}, we get the following:

\begin{prop}\label{prop:spt basic sst}
Let $E \in \Spt(\mathbb{C})$. Then, $E^{\sst}$ is $\mathbb{A}^1$-weak-invariant.
\end{prop}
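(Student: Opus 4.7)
The plan is to reduce this directly to Theorem~\ref{thm:A1-invariance}, which already establishes $\A^1$-weak-invariance of the semi-topologization functor for presheaves defined on all of $\Sch_{\C}$. The point of Definition~\ref{def:smsst} is precisely that we first artificially extend $E$ from $\Sm_{\C}$ to $\Sch_{\C}$ and only then apply the $\sst$-process, so the nontrivial $\A^1$-weak-invariance statement is already available from the previous section.

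More concretely, given $X \in \Sm_{\C}$, I would unpack $E^{\sst}(X)$ according to Definition~\ref{def:smsst} as $\bar{E}^{\sst}(X)$, where $\bar{E}$ is the artificial extension of $E$ to a presheaf of spectra on $\Sch_{\C}$ as in Definition~\ref{def:extension}. Since $\A^1 \times X \in \Sm_{\C}$ whenever $X \in \Sm_{\C}$, the projection $\pi \colon \A^1 \times X \to X$ is a morphism in $\Sm_{\C}$, and the induced map $E^{\sst}(X) \to E^{\sst}(\A^1 \times X)$ coincides with the map $\bar{E}^{\sst}(X) \to \bar{E}^{\sst}(\A^1 \times X)$ induced by $\pi$ viewed as a map in $\Sch_{\C}$.

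Now Theorem~\ref{thm:A1-invariance}, applied to the presheaf of spectra $\bar{E}$ on $\Sch_{\C}$ and to the object $X \in \Sch_{\C}$, produces precisely the desired weak-equivalence $\bar{E}^{\sst}(X) \xrightarrow{\simeq} \bar{E}^{\sst}(\A^1 \times X)$. Restricting to $\Sm_{\C}$ then gives $\A^1$-weak-invariance of $E^{\sst}$.

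There is really no obstacle here; the only thing to verify carefully is that Definition~\ref{def:smsst} does mean what I claim, namely that $E^{\sst}(Y) = \bar{E}^{\sst}(Y)$ for all $Y \in \Sm_{\C}$ (which is just the equality ${\bf rest} \circ {\sst} \circ {\bf ext}(E)(Y) = \bar{E}^{\sst}(Y)$), and that Theorem~\ref{thm:A1-invariance} is formulated for arbitrary $X \in \Sch_{\C}$ and hence applies in particular to smooth $X$. No further input is needed.
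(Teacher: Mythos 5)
Your proposal is correct and is exactly the paper's argument: the paper derives this proposition ``immediately'' from Theorem~\ref{thm:A1-invariance} applied to the artificial extension $\bar{E}$ on $\Sch_{\C}$, using that $E^{\sst}(Y)=\bar{E}^{\sst}(Y)$ for $Y\in\Sm_{\C}$ by Definition~\ref{def:smsst}. You have merely spelled out the same reduction in more detail, and the details you supply are accurate.
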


Recall the following important tool from \cite[Theorem~11]{FW Handbook}}, which is used in the form of Theorem \ref{thm:rec prin}.

\begin{thm}[Friedlander-Walker recognition principle]\label{thm:FWRP}
Let $E$ and $F$ be presheaves of spectra on $\Sch_{\mathbb{C}}$ and let $f: E \to F$ be a morphism of presheaves, which is an object-wise weak-equivalence on $\Sm_{\mathbb{C}}$. Then $|E(\Delta_{\top} ^{\bullet})| \to |F(\Delta_{\rm top} ^{\bullet})|$ is a weak-equivalence.
\end{thm}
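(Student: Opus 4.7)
The plan is to reduce to a levelwise statement at each simplicial degree and then exploit filtered cofinality. The realization $|E(\Delta^{\bullet}_{\top})|$ is the diagonal of the simplicial spectrum $[n] \mapsto E(\Delta^n_{\top})$, and similarly for $F$; this identification is precisely the content of \S\ref{subsubsection:Total}. Since the diagonal of a levelwise weak-equivalence of simplicial spectra is itself a weak-equivalence in the Bousfield-Friedlander model structure, it suffices to prove, for each $n \geq 0$, that $E(\Delta^n_{\top}) \to F(\Delta^n_{\top})$ is a weak-equivalence of spectra.

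Fix $n \geq 0$. By Definition~\ref{def:presst}, $E(\Delta^n_{\top})$ is the filtered colimit $\colim_{(g,U) \in (\Delta^n_{\top} | \Var_{\C})^{\op}} E(U)$; the index category is filtered because taking products of varieties furnishes common precursors, showing that $(\Delta^n_{\top} | \Var_{\C})$ is cofiltered. Filtered colimits of spectra commute with stable homotopy groups, hence preserve weak-equivalences. The plan is therefore to show that the filtered colimit above is computed cofinally by its restriction to the full subcategory $(\Delta^n_{\top} | \Sm_{\C})^{\op}$ of pairs $(g, V)$ with $V$ smooth. Granting this cofinality, both $E(\Delta^n_{\top})$ and $F(\Delta^n_{\top})$ are identified with filtered colimits over $(\Delta^n_{\top} | \Sm_{\C})^{\op}$, and on this subcategory the hypothesis supplies object-wise weak-equivalences $E(V) \to F(V)$ for every $V \in \Sm_{\C}$, which assemble into a weak-equivalence on colimits.

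The central obstacle is verifying the cofinality of $(\Delta^n_{\top}|\Sm_{\C})^{\op} \hookrightarrow (\Delta^n_{\top}|\Var_{\C})^{\op}$. For each $(g : \Delta^n_{\top} \to U^{an})$ with $U$ possibly singular, one must produce a morphism $h : V \to U$ from a smooth variety $V$ together with a continuous lift $g' : \Delta^n_{\top} \to V^{an}$ with $h^{an} \circ g' = g$. Existence of $h$ is furnished by Hironaka's resolution of singularities, yielding a proper surjective birational morphism with $V$ smooth; the delicate part is the topological lift of $g$ through $h^{an}$, since proper surjective maps of analytic spaces admit neither sections nor general path-lifting. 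I would approach this by inducting on the simplicial skeleta of $\Delta^n_{\top}$, exploiting its compactness and contractibility together with the local structure of the resolution away from the exceptional locus. If strict lifting is beyond reach, the alternative is a homotopy-cofinality argument: verify that the relevant comma categories are non-empty and connected, using simplicial hyperresolutions in the spirit of Deligne combined with the $\A^1$-weak-invariance of Theorem~\ref{thm:A1-invariance} to collapse the contributions from the singular strata. Either route yields the required identification of colimits and thus the desired weak-equivalence.
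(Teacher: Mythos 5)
First, note that the paper does not actually prove this statement: it is quoted verbatim from Friedlander--Walker (\cite[Theorem~11]{FW Handbook}), so there is no internal proof to compare with; your proposal therefore has to stand on its own, and it does not.

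The fatal step is the reduction to a fixed simplicial level $n$ via cofinality of $(\Delta^n_{\top}|\Sm_{\C})^{\op}$ in $(\Delta^n_{\top}|\Var_{\C})^{\op}$. Cofinality would require that for every $(f,U)$ with $U$ singular there exist a smooth $V$, a morphism $h\colon V\to U$, and a continuous $g\colon \Delta^n_{\top}\to V^{an}$ with $h^{an}\circ g=f$. Already for $n=1$ and $U$ a nodal curve this fails: take $f$ a path which approaches the node along one analytic branch and leaves along the other. Any morphism from a smooth connected variety to $U$ through which such an $f$ could lift is non-constant, hence factors through the normalization $\nu\colon \tilde{U}\to U$ (smooth implies normal), and $f$ does not lift through $\nu^{an}$: near the node, $\nu^{an}$ is two disjoint discs mapping to the two branches, so a lift would have to jump between the two preimages of the node. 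Thus the smooth subcategory is not cofinal; worse, the relevant comma category is \emph{empty}, so your fallback of ``homotopy cofinality via non-empty connected comma categories'' collapses for the same reason, and Hironaka's resolution is of no help because proper surjective morphisms do not have the continuous simplex-lifting property you would need. This is not an artifact of your particular construction: for $n\ge 1$ the levelwise map $E(\Delta^n_{\top})\to F(\Delta^n_{\top})$ has no reason to be an equivalence under the stated hypotheses (only $n=0$ is rescued by surjectivity of a resolution), which is exactly why the theorem concerns the realization $|E(\Delta^{\bullet}_{\top})|$ and cannot be proved one simplicial degree at a time.

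The uncontroversial ingredients you use are fine: the diagonal/realization of a levelwise weak-equivalence of simplicial spectra is a weak-equivalence, and filtered colimits commute with stable homotopy groups. But the entire content of the recognition principle is the passage from smooth to arbitrary varieties, and the Friedlander--Walker argument achieves this by exploiting the simplicial direction (compactness of $\Delta^{\bullet}_{\top}$, subdivisions and homotopies, descent-type arguments for proper surjective maps), not by a levelwise cofinality statement. Appealing to Theorem~\ref{thm:A1-invariance} does not address this either: that result concerns $\A^1$-weak-invariance of the output $E^{\sst}$ and says nothing about comparing the contributions of singular and smooth inputs to the colimit.
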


\begin{thm}\label{thm:rec prin}
$(1)$ If $f:E \to F$ is a morphism of presheaves of spectra on $\Sch_{\C}$, which is an object-wise weak-equivalence on $\Sm_{\C}$, then $f^{\sst} : E^{\sst} \to F^{\sst}$ is an object-wise weak-equivalence on $\Sm_{\mathbb{C}}$. $(2)$ If $f:E \to F$ is a morphism of presheaves of spectra on $\Sm_{\C}$, which is an object-wise weak-equivalence, then $f^{\sst} : E^{\sst} \to F^{\sst}$ is an object-wise weak-equivalence on $\Sm_{\mathbb{C}}$.
\end{thm}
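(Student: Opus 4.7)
The plan is to deduce both statements from the Friedlander--Walker recognition principle (Theorem~\ref{thm:FWRP}) combined with the slicing identity of Lemma~\ref{lem:cor sub}, with (2) reduced to (1) via the artificial extension functor.

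For part (1), I would fix $X \in \Sm_{\C}$ and pass to the sliced presheaves $E_X, F_X$ on $\Sch_{\C}$ defined by $E_X(U) = E(U \times X)$ and $F_X(U) = F(U \times X)$, together with the induced map $f_X : E_X \to F_X$. The first observation is that since $X \in \Sm_{\C}$, the product $U \times X$ lies in $\Sm_{\C}$ whenever $U$ does, so the hypothesis on $f$ immediately gives that $f_X$ is an object-wise weak-equivalence on $\Sm_{\C}$ (even though $E_X, F_X$ are still presheaves on all of $\Sch_{\C}$). Applying Theorem~\ref{thm:FWRP} to $f_X$ then yields a weak-equivalence of spectra
\[
|E_X(\Delta^{\bullet}_{\top})| \xrightarrow{\simeq} |F_X(\Delta^{\bullet}_{\top})|.
\]
Finally, Lemma~\ref{lem:cor sub} (evaluated at $U = \Spec \C$, or, equivalently, directly from the definition of $E^{\sst}$ and $E_X$) identifies this map with $f^{\sst}(X) : E^{\sst}(X) \to F^{\sst}(X)$, completing part (1).

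For part (2), I would first lift $E, F$ to presheaves on $\Sch_{\C}$ via the artificial extension functor ${\bf ext}$ of \eqref{eqn:Art-Ext}, obtaining $\bar{f} : \bar{E} \to \bar{F}$. Since $(W|\Sm_{\C})^{\op}$ carries the terminal object $({\rm Id}_W, W)$ for every $W \in \Sm_{\C}$, one has $\bar{E}(W) = E(W)$ and $\bar{F}(W) = F(W)$ on $\Sm_{\C}$, so that $\bar{f}$ restricted to $\Sm_{\C}$ coincides with $f$ and is therefore an object-wise weak-equivalence on $\Sm_{\C}$. Part (1) now applies to $\bar{f}$, and combining this with the very definition $E^{\sst} = \bar{E}^{\sst}|_{\Sm_{\C}}$ from Definition~\ref{def:smsst} yields the conclusion.

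The argument is essentially formal once the machinery is in place; there is no substantial obstacle. The only points that require a moment's care are checking that products of smooth $\C$-schemes remain smooth (so that $f_X$ is object-wise weak-equivalent on $\Sm_{\C}$), and recognizing $|E_X(\Delta^{\bullet}_{\top})|$ as $E^{\sst}(X)$ via Lemma~\ref{lem:cor sub}. The artificial extension step in (2) is what makes it safe to apply the recognition principle even when $f$ is \emph{a priori} given only on $\Sm_{\C}$, where the indexing categories $(\Delta^n_{\top} | \Sm_{\C})^{\op}$ fail to be cofiltered.
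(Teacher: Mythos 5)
Your proposal is correct and follows essentially the same route as the paper: part (1) by applying the Friedlander--Walker recognition principle to the sliced map $f_X : E_X \to F_X$ and identifying $(E_X)^{\sst}(\Spec \C)$ with $E^{\sst}(X)$ via Lemma~\ref{lem:cor sub}, and part (2) by reducing to (1) through the artificial extension, using that $\bar{E}$ and $\bar{F}$ agree with $E$ and $F$ on $\Sm_{\C}$. The extra remarks you make (smoothness of $U \times X$, the non-cofiltered indexing issue motivating the extension) are accurate but not needed beyond what the paper itself uses.
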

\begin{proof}
We first prove $(1)$. By the given assumption, for $X \in \Sm_{\C}$, the map $E_X \to F_X$ is an object-wise weak-equivalence on $\Sm_{\C}$. By Theorem~\ref{thm:FWRP}, the map $\left(E_X\right)^{\sst}(\Spec(\C)) \to \left(F_X\right)^{\sst}(\Spec(\C))$ is a weak-equivalence. Now by Lemma~\ref{lem:cor sub}, the map ${E}^{\sst}(X) \to {F}^{\sst}(X)$ is a weak-equivalence. To prove $(2)$, note that the map $\bar{E} \to \bar{F}$ is an object-wise weak-equivalence on $\Sm_{\C}$. So, by $(1)$ the map ${\bar{E}}^{\sst}(X) \to {\bar{F}}^{\sst}(X)$ is a weak-equivalence for $X \in \Sm_{\C}$. Equivalently, the map $E^{\sst}(X) \to F^{\sst}(X)$ is a weak-equivalence.
\end{proof}

\subsection{The loop space and the $\sst$-functor}\label{subsection:Loop-sst}
For a map $f:E \to F$ of presheaves on $\Sm_{\C}$ or $\Sch_{\C}$ of objects in $\Spc_{\bullet}$, the fiber ${\rm fib}(f)$ is by definition $\lim  \{ \star \rightarrow F {\leftarrow} E\}$, and ${\rm fib}(f) \to E \to F$ is called a fiber sequence. This is not same as a homotopy fiber sequence unless $f$ is a fibration.  Given an open or a closed immersion of schemes $A \subseteq B$ in $\Sm_{\C}$, the functor $\Omega_{B/A}(-)$ on $\Spt(\mathbb{C})$ is $E \mapsto \Omega_{B/A}E = (\Omega_{B/A}E_0, \Omega_{B/A} E_1, \cdots )$ (\S\ref{subsection:A1BG bispec}) , where $\Omega_{B/A}E_n ={\sH}om_{\bullet}(B/A, E_n) = {\rm fib}\left({\sH}om(B, E_n) \to {\sH}om(A, E_n)\right)$. So, we have an object-wise fiber sequence $\Omega_{B/A}E \to E_B \to E_A$ of presheaves, where $E_B(X) = E(B \times X) = {\sH}om(B, E)(X)$. For $B \in \Sm_{\C}$, the map ${\sH}om(B, E) \to {\sH}om(B, E^{\sst})$ induces ${\sH}om(B, E)^{\sst} \to {\sH}om(B, E^{\sst})$. By the universal property of $\Omega_{B/A}$, there is a natural transformation $(\Omega_{B/A}(-))^{\sst} \to \Omega_{B/A}((-)^{\sst})$. 

\begin{prop}\label{prop:sst of Gm loop}
For $E \in \Spt(\C)$, the map $(\Omega_{B/A}E)^{\sst} \to \Omega_{B/A}(E^{\sst})$ in $\Spt(\C)$ is an object-wise weak-equivalence on $\Sm_{\C}$. 
\end{prop}
\begin{proof}For $E \in \Spt(\C)$, let $\bar{E}={\rm \bf ext}(E)$, and $\bar{E}_B (X):= \bar{E}(B \times X)$ for $X \in \Sch_{\mathbb{C}}$.  
Let ${\Omega}_{B/A}\bar{E}$ be the object-wise fiber of $\bar{E}_B \to \bar{E}_A$. 
This sequence on $\Sch_{\C}$ restricts to $\Omega_{B/A}E \to E_B \to E_A$ on $\Sm_\C$. Note that there is a morphism $\ov{E_B} \to \bar{E}_B$ of presheaves on $\Sch_{\C}$, which restricts to ${\rm Id} : E_B \to E_B$ on $\Sm_{\C}$. By the universal property of fiber, we get a morphism of presheaves ${\Omega_{B/A}' E}:= {\rm fib}\left(\ov{E_B} \to \ov{E_A}\right) \to \Omega_{B/A}\bar{E}$ on $\Sch_{\C}$, which is an isomorphism on $\Sm_{\C}$, which gives a commutative diagram of presheaves on $\Sch_{\mathbb{C}}$
\begin{equation}\label{eqn:Hom-spec-1}
\xymatrix@C2pc{
\ov{\Omega_{B/A} E} \ar[r] \ar[d] \ar@/_2pc/[dd]_u & \ov{E_B} \ar[r] \ar@{=}[d] &  
\ov{E_A} \ar@{=}[d] \\
{\Omega_{B/A}' E} \ar[r] \ar[d] & \ov{E_B} \ar[r] \ar[d] &  \ov{E_A} \ar[d] \\
\Omega_{B/A}\bar{E} \ar[r] & \bar{E}_B \ar[r] & 
\bar{E}_A,}
\end{equation}
where the bottom two rows are object-wise fiber sequences and all vertical arrows are isomorphisms on $\Sm_{\C}$. Let $u$ be the composition. Since filtered colimits commute with fiber products (\cite[\S~IX.2 Theorem 1]{ML}), from the above we deduce the diagram
$$\xymatrix{
{\Omega_{B/A}' E}(\Delta^n_{\rm top} \times -) \ar[r] \ar[d] & 
\ov{E_B}(\Delta^n_{\rm top} \times -) \ar[r] \ar[d] &  
\ov{E_A}(\Delta^n_{\rm top} \times -) \ar[d] \\
\Omega_{B/A}\bar{E}(\Delta^n_{\rm top} \times -) \ar[r] & 
\bar{E}_B(\Delta^n_{\rm top} \times -) \ar[r] & 
\bar{E}_A(\Delta^n_{\rm top} \times -),}
$$
of presheaves of spectra on $\Sch_{\mathbb{C}}$, where the rows are object-wise fiber sequences. 
Since the fiber of a map of spectra is defined level-wise, taking the diagonals of maps of simplicial spectra as in \S \ref{subsubsection:Total}, we get a commutative diagram 
\begin{equation}\label{eqn:Hom-spec-3}
\xymatrix@C1pc{
({\Omega_{B/A} 'E})^{\sst} \ar[r] \ar[d] & 
(\ov{E_B})^{\sst} \ar[r] \ar[d] &  (\ov{E_A})^{\sst} \ar[d] \\
(\Omega_{B/A}\bar{E})^{\sst} \ar[r] & (\bar{E}_B)^{\sst} \ar[r] & 
(\bar{E}_A)^{\sst}}
\end{equation}
of presheaves of spectra on $\Sch_{\mathbb{C}}$, where the rows are object-wise fiber sequences. Since each vertical arrow in ~\eqref{eqn:Hom-spec-1} is a morphism of presheaves of spectra on $\Sch_{\C}$ which is an isomorphism on $\Sm_{\C}$, by Theorem~\ref{thm:rec prin} each vertical arrow in \eqref{eqn:Hom-spec-3} is an object-wise weak-equivalence on $\Sm_{\C}$. By definition and Lemma \ref{lem:cor sub}, the map $(\bar{E}_B)^{\sst} \to (\bar{E}^{\sst})_B = (E^{\sst})_B$ is an isomorphism on $\Sm_{\mathbb{C}}$, and the same is true for $E_A$. Composing these with the vertical maps in \eqref{eqn:Hom-spec-3}, and using the identification $(E_B)^{\sst} = (\ov{E_B})^{\sst}$, we get a commutative diagram 
\begin{equation}\label{eqn:Omega cofiber}
\xymatrix{
({\Omega_{B/A}' E})^{\sst} \ar[r] \ar[d] & 
(E_B)^{\sst} \ar[r] \ar[d] &  (E_A)^{\sst} \ar[d] \\
(\Omega_{B/A}\bar{E})^{\sst} \ar[r] & ({E}^{\sst})_B \ar[r] & 
({E}^{\sst})_A}
\end{equation}
of presheaves of spectra on $\Sm_{\mathbb{C}}$, where the rows are object-wise fiber sequences and the vertical arrows are object-wise weak-equivalences in $\Sm_{\C}$. Consider the sequence of maps $\theta \circ u^{\sst}: (\ov{\Omega_{B/A} E})^{\sst} \to (\Omega_{B/A}\bar{E})^{\sst} \to \Omega_{B/A}(\bar{E}^{\sst})$. The map $\theta$ is given by the universal property, and it is an isomorphism since the bottom row of \eqref{eqn:Omega cofiber} is an object-wise fiber sequence. The composite $\theta \circ u^{\sst}$ is an object-wise weak-equivalence on $\Sm_{\mathbb{C}}$ because the map $\ov{\Omega_{B/A} E} \to \Omega_{B/A}\bar{E}$ is an isomorphism on $\Sm_{\C}$ as in \eqref{eqn:Hom-spec-1} so Theorem \ref{thm:rec prin} applies. Thus, by the 2-out-of-3 axiom, the map $u^{\sst}$ is an object-wise weak-equivalence on $\Sm_{\C}$. Since $(\ov{\Omega_{B/A} E})^{\sst} =  ({\Omega_{B/A} E})^{\sst}$ and $\Omega_{B/A}(\bar{E}^{\sst}) = \Omega_{B/A}(E^{\sst})$, we are done.
\end{proof}

\begin{cor}\label{cor:sst of Gm loop-*}
For $E \in \Spt(\C)$, the maps $(\Omega_tE)^{\sst} \to \Omega_{t}(E^{\sst})$ and $(\Omega_{T}E)^{\sst} \to \Omega_{T}(E^{\sst})$ in $\Spt(\C)$ are object-wise weak-equivalences on $\Sm_{\C}$.
\end{cor}

The above follows by applying Proposition \ref{prop:sst of Gm loop} to $t= (\mathbb{G}_m, 1)$ and $T = (\mathbb{P}^1, \infty)$.
Using that $\Omega_{S^1}E(X) = \Omega_{S^1}\left(E(X)\right)$ for a presheaf of spectra $E$ on $\Sch_{\C}$ and that $\Omega_{S^1}\bar{F} \cong \ov{\Omega_{S^1}F}$ for a presheaf of spectra $F$ on $\Sm_{\C}$, one checks that for a presheaf $E$ of spectra on $\Sm_{\mathbb{C}}$, the map $(\Omega_{S^1}E)^{\sst} \to \Omega_{S^1}(E^{\sst})$ is an isomorphism.

\section{Homotopy semi-topologization}\label{sec:sst BG}
We prove that the classes of B.G. and $\A^1$-B.G. presheaves of spectra are closed under semi-topologization. We prove similar results for $(s,\mathfrak{p})$-bispectra. Using these we define \emph{homotopy semi-topologization} on motivic homotopy categories.

\subsection{On $\mathcal{SH}_{S^1}(\C)$} 
For a simplicial spectrum $E$, let $E_p:= E(\Delta[p])$ for $p \geq 0$.

\begin{lem}\label{lem:diag-cofibration}
If each $E_p$ of a simplicial spectrum $E$ is cofibrant, then so is $\diag E$.
\end{lem}
\begin{proof}
By Theorem \ref{thm:MSSpectra}, we need to show that each map $S^1 \wedge (\diag E)_n \to (\diag E)_{n+1}$ in $\Spc_{\bullet}$ is a monomorphism. For a monomorphism $A \to B$ in $\Spc$, one has $(B/A)_n = {B_n}/{A_n}$. For $A, B \in \Spc_{\bullet}$, one has $(A \times B)_n = A_n \times B_n$ and $(A \wedge B)_n = A_n \wedge B_n$. So, it suffices to show  $(S^1)_p \wedge (\diag E)_{n,p} \to (\diag E)_{n+1,p}$ is a monomorphism. But $(\diag E)_{n,p} = E^n_{p,p}$ (\S\ref{subsubsection:Total}) and $(S^1)_i \wedge E^n_{p,i} \to E^{n+1}_{p,i}$ is a monomorphism because each $E_p$ is cofibrant. Thus, the assertion follows. 
\end{proof}

\begin{lem}\label{lem:diag-cofibration*}
If each $f_p: E_p \to F_p$ of a morphism $f:E \to F$ of simplicial spectra is a cofibration of spectra, then the map $\diag E \to \diag F$, i.e. $|E| \to |F|$ is a cofibration.
\end{lem} 
\begin{proof}
A cofibration of spectra is also a level-wise monomorphism in $\Spc_{\bullet}$. So, $f$ is a level-wise monomorphism of bisimplicial sets, and the map $\diag E \to \diag F$ is a level-wise monomorphism in $\Spc$. By \S\ref{subsubsection:NMSpec} and Theorem \ref{thm:MSSpectra}, we need to show that the spectrum ${\diag F}/{\diag E}$ is cofibrant, where $({\diag F}/{\diag E})_n = {(\diag F)_n}/{(\diag E)_n}$. Let $G = F/E$, where $G^n_{p,q} = {F^n_{p,q}}/{E^n_{p.q}}$. Since $(S^1\wedge F^n_p)/{(S^1 \wedge E^n_p)} \simeq S^1\wedge ({F^n_p}/{E^n_p})$, we see that $G$ is a simplicial spectrum. Furthermore, $(\diag G)_{n,p} = G^n_{p,p} = {F^n_{p,p}}/{E^n_{p,p}} = {(\diag F)_{n,p}}/{(\diag E)_{n,p}} = ({\diag F}/{\diag E})_{n,p}.$ Hence $\diag G = {\diag F}/{\diag E}$. Hence by Lemma \ref{lem:diag-cofibration}, it suffices to show that $G_p$ is a cofibrant spectrum. But, $G_p={F_p}/{E_p}$, and that $E_p \to F_p$ is a cofibration implies that ${F_p}/{E_p}$ is cofibrant. 
\end{proof}

Since Nisnevich or motivic cofibrations between presheaves of spectra on $\Sm_S$ are exactly object-wise cofibrations, we deduce the following from Lemma \ref{lem:diag-cofibration*}.

\begin{cor}\label{cor:diag-cofib*}
If each $f_p: E_p \to F_p$ of a morphism $f: E \to F$ of presheaves of simplicial spectra on $\Sch_{\mathbb{C}}$ is an object-wise (Nisnevich, motivic) cofibration of presheaves of spectra, then the map $\diag E \to \diag F$, i.e. $|E| \to |F|$ is an object-wise (Nisnevich, motivic) cofibration.
\end{cor}

\begin{prop}\label{prop:sst-cofiber}
Let $g \circ f: E \to F \to G$ be an object-wise homotopy cofiber sequence of presheaves of spectra on $\Sch_{\C}$. Then $g ^{\sst} \circ f^{\sst} : E^{\sst} \to F^{\sst} \to G^{\sst}$ is an object-wise homotopy cofiber sequence on $\Sm_{\C}$.
\end{prop}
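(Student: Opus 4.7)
The plan is to reduce the statement to a strict cofiber sequence along an object-wise cofibration, apply the formal commutativity of semi-topologization with quotients and with cofibrations, and transfer the resulting equivalences back via the Friedlander-Walker recognition principle (Theorem~\ref{thm:rec prin}).

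First, I functorially factor $f: E \to F$ through the mapping cylinder of Bousfield-Friedlander spectra as $E \xrightarrow{i} \tilde{F} \xrightarrow{p} F$ in the category of presheaves of spectra on $\Sch_{\C}$, so that $i$ is a level-wise object-wise cofibration and $p$ is an object-wise weak-equivalence; the mapping cylinder is built from finite colimits compatible with the presheaf structure, so the factorization is natural in the base variable. Since each $i(X)$ is a cofibration of spectra, the level-wise quotient $(\tilde{F}/E)(X)$ computes the homotopy cofiber of $f(X)$, and the hypothesis on $E \to F \to G$ produces a natural object-wise weak-equivalence $q: \tilde{F}/E \to G$ on $\Sch_{\C}$ over $g$. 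Applying Proposition~\ref{prop:sst and wedge}(3) level-wise (using that $\sst$ of a spectrum is computed level-wise, i.e., $(E^{\sst})_n = (E_n)^{\sst}$) yields the identification $(\tilde{F}/E)^{\sst} = \tilde{F}^{\sst}/E^{\sst}$. For the cofibration, each level of the simplicial presheaf of spectra $\{i(\Delta^n_{\top} \times -)\}_{n \ge 0}$ is a filtered colimit of object-wise cofibrations $E(U \times -) \to \tilde{F}(U \times -)$, and therefore remains an object-wise cofibration on $\Sch_{\C}$. Corollary~\ref{cor:diag-cofib*} then gives that $i^{\sst}: E^{\sst} \to \tilde{F}^{\sst}$ is an object-wise cofibration, so
\[
E^{\sst} \xrightarrow{i^{\sst}} \tilde{F}^{\sst} \to \tilde{F}^{\sst}/E^{\sst}
\]
is a strict, and hence homotopy, cofiber sequence of presheaves of spectra on $\Sch_{\C}$, in particular on $\Sm_{\C}$.

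Finally, Theorem~\ref{thm:rec prin}(1) says that $\sst$ sends object-wise weak-equivalences on $\Sch_{\C}$ to object-wise weak-equivalences on $\Sm_{\C}$; applied to $p$ and $q$, this produces a commutative zig-zag of equivalences identifying the displayed homotopy cofiber sequence with $E^{\sst} \to F^{\sst} \to G^{\sst}$ on $\Sm_{\C}$, proving the proposition.

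The principal obstacle is the cofibration step: one needs to know that semi-topologization preserves object-wise cofibrations of presheaves of spectra. This reduces to the fact that filtered colimits of cofibrations of Bousfield-Friedlander spectra are again cofibrations, which follows because filtered colimits commute with both the level-wise monomorphism condition and the pushout condition appearing in the definition of a cofibration in Theorem~\ref{thm:MSSpectra}.
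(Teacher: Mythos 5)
Your proposal follows the same skeleton as the paper's proof (replace $f$ by a cofibration, use Proposition~\ref{prop:sst and wedge}(3) for the quotient, the filtered-colimit stability of cofibrations plus Corollary~\ref{cor:diag-cofib*} for the diagonal, and Theorem~\ref{thm:rec prin} to transport the weak-equivalences), but the step where you produce the cofibration has a genuine gap. The inclusion $i\colon E \to \tilde F$ into the naive level-wise mapping cylinder is in general \emph{not} an object-wise cofibration of Bousfield--Friedlander spectra. A cofibration requires not just that each $i_n$ be a monomorphism but that $E_{n+1} \coprod_{S^1\wedge E_n} S^1\wedge \tilde F_n \to \tilde F_{n+1}$ be a monomorphism (\S\ref{subsubsection:NMSpec}, Theorem~\ref{thm:MSSpectra}); for the cylinder this map restricts, on the interior of the cylinder coordinate, to $\sigma^E_n\wedge \mathrm{id}$, so it fails to be injective whenever the structure maps $S^1\wedge E_n(X)\to E_{n+1}(X)$ are not monomorphisms. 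Since $E$ is an arbitrary presheaf of spectra, $E(X)$ need not be cofibrant (take, e.g., $E_0=S^0$, $E_n=\ast$ for $n\ge 1$, $f$ the map to the trivial spectrum), and then $i(X)$ is not a cofibration. Both of your subsequent moves depend on exactly this property: the claim that the strict quotient $(\tilde F/E)(X)$ computes the homotopy cofiber (and hence the existence of the comparison $q$), and the hypotheses of Corollary~\ref{cor:diag-cofib*}, whose inputs must already be object-wise cofibrations before you invoke closure under filtered colimits. So the "principal obstacle" is not the preservation of cofibrations by filtered colimits (which is fine), but that your factorization never supplies a cofibration to begin with.

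The paper avoids this by quoting the characterization of object-wise homotopy cofiber sequences from \cite[\S~A.2]{BF}: the hypothesis is \emph{equivalent} to the existence of a diagram $E\xrightarrow{f'}F'$, with $f'$ an object-wise cofibration and object-wise weak-equivalences $F'\to F$ and $F'/E\to G$ compatible with $f$ and $g$; from that point on the argument is the one you give. Your proof can be repaired by citing this characterization in place of the mapping-cylinder factorization, or by first replacing $E$ functorially by an object-wise level-cofibrant spectrum, or by proving separately that a level-wise monomorphism of spectra already has stably correct strict quotient (true, but not established in the paper, and you would then also need the easy level-mono analogue of Corollary~\ref{cor:diag-cofib*}). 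Note also that the hypothesis does not directly hand you a single natural map $q\colon \tilde F/E\to G$ over $\Sch_{\C}$; what the characterization of \cite{BF} provides is a natural zig-zag of object-wise weak-equivalences defined on $\Sch_{\C}$, which is all you need for the final application of Theorem~\ref{thm:rec prin}(1).
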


\begin{proof}
 Recall from \cite[\S A 2]{BF} that $g \circ f: E \to F \to G$ is an object-wise homotopy cofiber sequence if and only if we have a sequence $g'\circ f': E \to H \to H/E$ of presheaves of spectra on $\Sch_{\C}$, $h: H \to F$, $p: H/E \to G$, where $f'$ is an object-wise cofibration, $h, p$ are object-wise weak-equivalences, such that $h \circ f' = f$ and $p \circ g' = g \circ h$.
By Theorem \ref{thm:rec prin}, $h^{\sst}$ and $p^{\sst}$ are object-wise weak-equivalences on $\Sm_{\C}$. Using Proposition \ref{prop:sst and wedge}(3), it remains to show that the map $ {f'}^{\sst}: E^{\sst} \to  H^{\sst}$ is an object-wise cofibration, equivalently, that the map $\diag (\tilde{E}) \to \diag(\tilde{H})$ is a cofibration, where $\tilde{E}$ is the presheaf of simplicial spectra on $\Sch_{\C}$ defined by $\tilde{E}(\Delta[p])(-) = E(\Delta^p_{\rm top} \times -)$ (Definition \ref{def:presst}) and similarly for $\tilde{H}$. Since $\tilde{E}(\Delta[p]) \to \tilde{H}(\Delta[p])$ is a filtered colimit of object-wise cofibrations, by \cite[Proposition~3.2]{Mitchel} this map is an object-wise cofibration. Hence, by Corollary \ref{cor:diag-cofib*}, the map $\diag (\tilde{E}) \to \diag (\tilde{H})$ is an object-wise cofibration. This finishes the proof.
\end{proof}

\begin{thm}\label{thm:s resp cd-ext}
Let $E$ be a presheaf of spectra (or complexes of abelian groups) on $\Sm_{\C}$. If $E$ is B.G., then so is $E^{\sst}$. If $E$ is $\mathbb{A}^1$-B.G., then so is $E^{\sst}$.
\end{thm}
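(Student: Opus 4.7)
The plan is to reduce the $\mathbb{A}^1$-B.G. claim to the B.G. claim via Proposition~\ref{prop:spt basic sst}, which gives $\mathbb{A}^1$-weak-invariance of $E^{\sst}$ unconditionally. So the heart of the matter is to show that when $E$ is B.G., the semi-topologization $E^{\sst}$ converts every elementary distinguished Nisnevich square $Q$ in $\Sm_{\mathbb{C}}$, say
\[
\xymatrix{W \ar[r] \ar[d] & U \ar[d]^{p} \\ V \ar[r]^{j} & X}
\]
into a homotopy Cartesian square of spectra. To this end, for each $Y \in \{W,U,V,X\}$, I introduce an auxiliary presheaf of spectra $\hat{E}_Y$ on $\Sch_{\mathbb{C}}$ defined by $\hat{E}_Y(A) := \bar{E}(A \times Y)$.

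Since étale morphisms and open immersions are preserved by arbitrary base change, $A \times Q$ is again an elementary distinguished Nisnevich square in $\Sm_{\mathbb{C}}$ for every $A \in \Sm_{\mathbb{C}}$. Together with $\bar{E}|_{\Sm_{\mathbb{C}}} = E$ and the B.G. property of $E$, this shows that the square $E(A \times Q)$ is homotopy Cartesian in spectra for all $A \in \Sm_{\mathbb{C}}$. Spectra form a stable category, so being homotopy Cartesian is equivalent to being a homotopy cofiber sequence. Thus
\[
\hat{E}_X \longrightarrow \hat{E}_U \oplus \hat{E}_V \longrightarrow \hat{E}_W
\]
is a sequence of presheaves of spectra on $\Sch_{\mathbb{C}}$ that is an object-wise homotopy cofiber sequence when restricted to $\Sm_{\mathbb{C}}$.

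I then invoke Proposition~\ref{prop:sst-cofiber}. Its proof uses only Theorem~\ref{thm:rec prin}(1), which handles morphisms of presheaves on $\Sch_{\mathbb{C}}$ that are object-wise weak equivalences merely on $\Sm_{\mathbb{C}}$, so the same argument extends to the present case where the input cofiber sequence is only cofiber on $\Sm_{\mathbb{C}}$. Combined with Proposition~\ref{prop:sst and wedge} to commute $\sst$ with the direct sum, this yields an object-wise homotopy cofiber sequence
\[
\hat{E}_X^{\sst} \longrightarrow \hat{E}_U^{\sst} \oplus \hat{E}_V^{\sst} \longrightarrow \hat{E}_W^{\sst}
\]
on $\Sm_{\mathbb{C}}$. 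A direct unwinding of Definitions~\ref{def:presst}, \ref{def:extension}, and \ref{def:smsst} shows that for each $Y$ in the square,
\[
\hat{E}_Y^{\sst}(\Spec \mathbb{C}) = |\bar{E}(\Delta^{\bullet}_{\top} \times Y)| = E^{\sst}(Y).
\]
Evaluating the cofiber sequence above at $\Spec \mathbb{C} \in \Sm_{\mathbb{C}}$ therefore produces the cofiber sequence $E^{\sst}(X) \to E^{\sst}(U) \oplus E^{\sst}(V) \to E^{\sst}(W)$ of spectra, equivalent to the square $E^{\sst}(Q)$ being homotopy Cartesian. Hence $E^{\sst}$ is B.G.

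The case of presheaves of complexes of abelian groups is entirely analogous, with the triangulated derived category $\bD(\Ab)$ playing the role of the stable category of spectra. The main delicate point is verifying that Proposition~\ref{prop:sst-cofiber} really does extend to cofiber sequences holding only on $\Sm_{\mathbb{C}}$: this requires a brief reinspection of its proof (to see that the functorial factorization $F \to F' \to F$ and the induced comparison $F'/E \to G$ feed into Theorem~\ref{thm:rec prin}(1) in the required way) rather than a black-box citation, but is otherwise routine.
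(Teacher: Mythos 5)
Your overall skeleton matches the paper's own proof: the artificial extension, the auxiliary presheaves $\hat{E}_Y(A)=\bar{E}(A\times Y)$ on $\Sch_{\C}$ (the paper's $E_Y$), the base-change observation for the elementary Nisnevich square, the recognition principle fed through Proposition~\ref{prop:sst-cofiber}, evaluation at $\Spec(\C)$ via Lemma~\ref{lem:cor sub}, and the reduction of the $\A^1$-B.G. statement to the B.G. one by $\A^1$-weak-invariance of $(-)^{\sst}$ (Proposition~\ref{prop:spt basic sst} or Remark~\ref{remk:A1-invariance}), which is exactly the paper's last step. The gap is in the middle: the three-term Mayer-Vietoris sequence $\hat{E}_X \to \hat{E}_U\oplus \hat{E}_V\to \hat{E}_W$ that you want to semi-topologize is not actually a sequence of presheaves of spectra. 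The second arrow must be the difference $h_1-h_2$ of the two restriction maps, and for presheaves of Bousfield-Friedlander spectra there is no strictly defined difference of maps, nor a strict nullhomotopy of the composite; the stable equivalence ``homotopy Cartesian iff MV cofiber sequence'' lives only in the homotopy category. But $(-)^{\sst}$ is a point-set functor on presheaves, and the notion of object-wise homotopy cofiber sequence used in Proposition~\ref{prop:sst-cofiber} requires genuine presheaf-level maps fitting into the diagram~\eqref{eqn:sst-cofiber0}. So the object you feed into (your extension of) Proposition~\ref{prop:sst-cofiber} has not been constructed, and the same problem reappears in that extension itself: producing the factorization diagram over $\Sch_{\C}$ needs a strict comparison map from the mapping cone to $\hat{E}_W$, hence again a strict nullhomotopy. (For presheaves of complexes of abelian groups the difference is strict and your version is fine; the delicate case is precisely spectra, from which the paper deduces the complexes case via Dold-Kan.)

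The paper avoids this by never forming the MV sequence: it takes $G_1$ and $G_2$ to be the object-wise homotopy cofibers of the two strictly defined horizontal maps $j_1\colon \hat{E}_X\to \hat{E}_U$ and $h_2\colon \hat{E}_V\to \hat{E}_W$, so both rows are honest object-wise cofiber sequences on all of $\Sch_{\C}$ and Proposition~\ref{prop:sst-cofiber} applies as stated (no extension needed); it observes that the square is object-wise homotopy co-Cartesian on $\Sm_{\C}$ if and only if $G_1\to G_2$ is an object-wise weak-equivalence on $\Sm_{\C}$, applies Theorem~\ref{thm:rec prin} to that single comparison map, and then evaluates at $\Spec(\C)$ using Lemma~\ref{lem:cor sub}. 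If you replace your MV sequence by this comparison of cofibers (or by the canonical, strictly defined map from the object-wise double mapping cylinder of $\hat{E}_U\leftarrow \hat{E}_X\to \hat{E}_V$ to $\hat{E}_W$), the rest of your argument is sound.
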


\begin{proof}
We prove it for presheaves of spectra for the other is a special case via Dold-Kan correspondence. We prove the first statement. Via the artificial extension in Definition \ref{def:extension}, regard $E$ as a presheaf on $\Sch_{\mathbb{C}}$. Given $X \in \Sm_{\mathbb{C}}$, the presheaf $E_X$ on $\Sch_{\mathbb{C}}$ is $E_X (Y):= E(X \times Y)$ for $Y \in \Sch_{\mathbb{C}}$. Given a Nisnevich square as in \eqref{eqn:elementary square}, where $X, U, V, W \in \Sm_{\C}$ with $W= U \times_X V$, we have a commutative diagram 
\begin{equation}\label{eqn:sst diagram middle o}
\xymatrix{
E_X \ar[r]^{j_1} \ar[d]_{j_2} & E_U \ar[d]^{h_1}\\
E_V \ar[r]^{h_2} & E_W}
\end{equation}
of presheaves of spectra on $\Sch_{\mathbb{C}}$, which is object-wise homotopy Cartesian on $\Sm_{\mathbb{C}}$. Equivalently, it is object-wise homotopy co-Cartesian on $\Sm_{\C}$. Let $G_1$ and $G_2$ be the object-wise homotopy cofibers of $j_1$ and $h_2$. Then \eqref{eqn:sst diagram middle o} is object-wise homotopy co-Cartesian on $\Sm_{\C}$ if and only if the map $h: G_1 \to G_2$ is an object-wise weak-equivalence on $\Sm_{\C}$. So, by Theorem \ref{thm:rec prin}, the map $ h^{\sst}: G^{\sst}_1 \to  G^{\sst}_2$ is an object-wise weak-equivalence on $\Sm_{\C}$. Using Proposition \ref{prop:sst-cofiber}, we obtain a commutative diagram
\begin{equation}\label{eqn:sst diagram middle a}
\xymatrix{
E^{\sst}_X \ar[r]^{j^{\sst}_1} \ar[d]_{j^{\sst}_2} & E^{\sst}_U \ar[d]^{h^{\sst}_1} \ar[r] & G^{\sst}_1 \ar[d]^{h^{\sst}} \\
E^{\sst}_V \ar[r]^{h^{\sst}_2} & E^{\sst}_W \ar[r] & G^{\sst}_2,}
\end{equation}
where the rows are object-wise homotopy cofiber sequences of presheaves on $\Sm_{\C}$. Since $h^{\sst}$ is an object-wise weak-equivalence on $\Sm_{\C}$, the left square in \eqref{eqn:sst diagram middle a} is object-wise homotopy co-Cartesian on $\Sm_{\C}$. Equivalently, it is object-wise homotopy Cartesian on $\Sm_{\C}$. Evaluating at $\Spec(\C)$ and applying Lemma \ref{lem:cor sub}, we obtain 
$$\xymatrix{
E^{\sst}(X) \ar[r]^{j^{\sst}_1} \ar[d]_{j^{\sst}_2} & E^{\sst}(U) \ar[d]^{h^{\sst}_1} \\
E^{\sst}(V) \ar[r]^{h^{\sst}_2} & E^{\sst}(W),}
$$
a homotopy Cartesian square of spectra. This shows that $E^{\sst}$ is B.G. as desired. The second statement follows from the first and Proposition \ref{prop:spt basic sst}. 
\end{proof}

Applying Theorems \ref{thm:A1Nis descent}, \ref{thm:rec prin} and \ref{thm:s resp cd-ext}, we conclude:

\begin{cor}\label{cor:BGA1 sst}
The $\sst$ of an $S^1$-stable motivic weak-equivalence of $\mathbb{A}^1$-B.G. motivic spectra is an object-wise weak-equivalence of $\mathbb{A}^1$-B.G. motivic spectra.
\end{cor}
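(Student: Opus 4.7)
The plan is to combine the three cited results in a straightforward chain. First, because both $E$ and $F$ are $\mathbb{A}^1$-B.G.\ motivic spectra on $\Sm_\mathbb{C}$ and $f$ is an $S^1$-stable motivic weak-equivalence, Theorem~\ref{thm:A1Nis descent} (the motivic descent theorem applied to presheaves of spectra) immediately upgrades $f$ to an object-wise weak-equivalence on $\Sm_\mathbb{C}$.

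Next, I would apply Theorem~\ref{thm:rec prin}(2), the Friedlander--Walker recognition principle in its form for presheaves of spectra defined on $\Sm_\mathbb{C}$, to the object-wise weak-equivalence $f$. The conclusion is that $f^{\sst} : E^{\sst} \to F^{\sst}$ is again an object-wise weak-equivalence of presheaves of spectra on $\Sm_\mathbb{C}$.

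Finally, to see that the source and target $E^{\sst}$ and $F^{\sst}$ are themselves $\mathbb{A}^1$-B.G., I would invoke Theorem~\ref{thm:s resp cd-ext}, which asserts that the semi-topologization functor preserves the $\mathbb{A}^1$-B.G.\ property of presheaves of spectra on $\Sm_\mathbb{C}$. Combining these three inputs yields the full statement of the corollary. There is no real obstacle here; the corollary is formally a packaging of the previously established descent theorem, the recognition principle, and the preservation theorem, and the only thing to check is that the hypotheses line up correctly at each step.
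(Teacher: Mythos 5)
Your argument is exactly the paper's: the corollary is stated as an immediate consequence of Theorem~\ref{thm:A1Nis descent} (to upgrade the motivic weak-equivalence of $\A^1$-B.G.\ spectra to an object-wise one), Theorem~\ref{thm:rec prin} (to pass to $f^{\sst}$), and Theorem~\ref{thm:s resp cd-ext} (to see that $E^{\sst}$ and $F^{\sst}$ remain $\A^1$-B.G.), and your chain of steps lines up with this precisely. The proposal is correct and matches the paper's reasoning.
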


\begin{cor}\label{cor:sst hat 1}
There exists an endo-functor $\hosst : \mathcal{SH}_{S^1} (\mathbb{C}) \to \mathcal{SH}_{S^1} (\mathbb{C})$, which coincides with the $\sst$-functor on $\mathbb{A}^1$-B.G. motivic spectra up to isomorphism. 
\end{cor}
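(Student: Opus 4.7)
The plan is to construct $\hosst$ as the composition of a functorial motivic fibrant replacement followed by the ordinary semi-topologization $(-)^{\sst}$. Concretely, for any $E \in \Spt(\mathbb{C})$, I use the functorial factorization supplied by the $S^1$-stable motivic model structure on $\Spt(\mathbb{C})$ to produce a motivic trivial cofibration $\eta_E : E \to R(E)$ with $R(E)$ motivic fibrant, and set $\hosst(E) := R(E)^{\sst}$, viewed as an object of $\mathcal{SH}_{S^1}(\mathbb{C})$ via the localization $\Spt(\mathbb{C}) \to \mathcal{SH}_{S^1}(\mathbb{C})$.

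The first step is to observe that $R(E)$ lies in the class of motivic spectra to which Corollary~\ref{cor:BGA1 sst} applies. Indeed, being motivic fibrant, $R(E)$ is Nisnevich fibrant (hence B.G.\ by Theorem~\ref{thm:Nis descent thm}) and $\mathbb{A}^1$-local, so it is $\mathbb{A}^1$-weak-invariant by Lemma~\ref{lem:BG A1wi A1loc}, and therefore $\mathbb{A}^1$-B.G. Next I promote the assignment $E \mapsto R(E)^{\sst}$ to a functor by setting $\hosst(f) := R(f)^{\sst}$ for a morphism $f : E \to E'$ in $\Spt(\mathbb{C})$, where $R(f)$ is the functorial lift supplied by the factorization. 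The key point for descent to $\mathcal{SH}_{S^1}(\mathbb{C})$ is that if $f$ is an $S^1$-stable motivic weak-equivalence, then by the two-out-of-three axiom $R(f) : R(E) \to R(E')$ is a motivic weak-equivalence between $\mathbb{A}^1$-B.G.\ motivic spectra, whence Corollary~\ref{cor:BGA1 sst} forces $R(f)^{\sst}$ to be an object-wise weak-equivalence on $\Sm_{\mathbb{C}}$, in particular an $S^1$-stable motivic weak-equivalence. Thus $\hosst$ sends stable motivic weak-equivalences to isomorphisms and descends to a well-defined endo-functor on $\mathcal{SH}_{S^1}(\mathbb{C})$.

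For the coincidence statement, if $E$ is itself $\mathbb{A}^1$-B.G., then the unit $\eta_E : E \to R(E)$ is a motivic weak-equivalence between $\mathbb{A}^1$-B.G.\ motivic spectra, so applying Corollary~\ref{cor:BGA1 sst} once more yields an object-wise weak-equivalence $E^{\sst} \to R(E)^{\sst} = \hosst(E)$, which is the desired isomorphism in $\mathcal{SH}_{S^1}(\mathbb{C})$. The proof is essentially formal once Corollary~\ref{cor:BGA1 sst} is in hand; the only care needed is to use a \emph{functorial} fibrant replacement so that $\hosst$ is genuinely a functor rather than merely an assignment on isomorphism classes of objects. In this sense the main technical obstacle has already been cleared by the motivic descent theorem (Theorem~\ref{thm:A1Nis descent}) together with Theorem~\ref{thm:s resp cd-ext}, which together power Corollary~\ref{cor:BGA1 sst}.
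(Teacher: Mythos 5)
Your proof is correct and follows essentially the same route as the paper: both rest on the observation that motivic fibrant spectra are $\A^1$-B.G.\ (Nisnevich fibrant plus $\A^1$-local, via Lemma~\ref{lem:BG A1wi A1loc}) together with Corollary~\ref{cor:BGA1 sst}, which guarantees that $\sst$ turns the relevant motivic weak-equivalences into object-wise ones. The only difference is packaging: you build the derived functor by hand from a functorial fibrant replacement and the universal property of the localization $\Spt(\C) \to \mathcal{SH}_{S^1}(\C)$, whereas the paper invokes the general existence criterion for right derived functors (\cite[Proposition~8.4.8]{Hirschhorn}), which does not even require the fibrant replacement to be functorial.
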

\begin{proof}We know from Theorem \ref{thm:s resp cd-ext} that $\sst: \Spt(\C) \to \Spt(\C)$ preserves $\A^1$-B.G. motivic spectra. 
Since an $S^1$-stable motivic fibrant motivic spectrum is Nisnevich fibrant and $\A^1$-local, it is $\A^1$-B.G. by Lemma \ref{lem:BG A1wi A1loc}. By Corollary \ref{cor:BGA1 sst}, we know $\sst$ takes a trivial motivic fibration between $S^1$-stable motivic fibrant motivic spectra into an $S^1$-stable motivic weak-equivalence. Thus, by  \cite[Proposition~8.4.8]{Hirschhorn} we obtain a right derived endo-functor $\hosst : \mathcal{SH}_{S^1} (\mathbb{C}) \to \mathcal{SH}_{S^1} (\mathbb{C})$, with desired properties. 
\end{proof}

\subsection{On $\mathcal{SH}(\C)$}\label{sec:sst T}

Let $E= (E_0, E_1, \cdots) \in \Spt_{(s,\mathfrak{p})}(\C)$ (\S \ref{subsubsection: bispec} and \S \ref{subsubsection:T-spec-M}) with the bonding maps $ T \wedge E_n \to E_{n+1}$. It yields $ \left(T \wedge E_n\right)^{\sst} = T^{\sst} \wedge E^{\sst}_n \to E^{\sst}_{n+1}$, by Proposition \ref{prop:sst and wedge}. Composed with $T\wedge E^{\sst}_n \to T^{\sst} \wedge E^{\sst}_n$, we get $  T \wedge E^{\sst}_n \to  E^{\sst}_{n+1}$. This gives $E^{\sst}: = (E^{\sst}_0, E^{\sst}_1, \cdots) \in \Spt_{(s,\mathfrak{p})}(\C)$. One checks  ${\rm Id} \to ( -)^{\sst}$ is natural on $\Spt_{(s, \mathfrak{p})}(\C)$.

\begin{thm}\label{thm:BGA1 sst-bispec}
$(1)$ The class of $\A^1$-B.G. $(s,\mathfrak{p})$-bispectra is closed under the $\sst$-functor. $(2)$ The class of $\A^1$-B.G. motivic $\Omega_T$-bispectra (Definition \ref{defn:BG bispec}) is closed under the $\sst$-functor. $(3)$ If $f$ is a stable motivic weak-equivalence of $\mathbb{A}^1$-B.G. motivic $\Omega_T$-bispectra, then $f^{\sst}$ is a $T$-level-wise object-wise weak-equivalence of $\mathbb{A}^1$-B.G. motivic $\Omega_T$-bispectra. 
\end{thm}
\begin{proof}
Part (1) holds by Theorem \ref{thm:s resp cd-ext}. For Part (2), let $E$ be an $\A^1$-B.G. motivic $\Omega_T$-bispectrum. Using Lemma \ref{lem:A1BGOmega-Spec 1} we deduce that each $\Omega_TE_n$ is an $\A^1$-B.G. motivic $S^1$-spectrum. So by Theorem \ref{thm:A1Nis descent} and Corollary \ref{cor:BGA1 sst}, the map $E^{\sst}_n \to (\Omega_TE_{n+1})^{\sst}$ is an object-wise weak-equivalence. Now by Corollary \ref{cor:sst of Gm loop-*}, the map $(E^{\sst})_n \to \Omega_T((E^{\sst})_{n+1})$ is an object-wise weak-equivalence, thus an $S^1$-stable motivic weak-equivalence. Part (3) follows from (1), (2) and Theorems \ref{thm:descent for bispectrum} and  \ref{thm:rec prin}.
\end{proof}

Recall that for a morphism $f: E \to F$ in $\Spt_{(s,\mathfrak{p})}(\C)$, the cone $C(f)$ is the push-out 
\begin{equation}\label{eqn:Cone}
\xymatrix{
E \ar[r]^<<<<<<{{\rm Id} \wedge 0} \ar[d]^{f} & E \wedge \Delta[1]
\ar[d]^{\bar{f}} \\
F \ar[r]^{\tilde{f}} & C(f),}
\end{equation}
where $\Delta[1]$ is pointed by one. Collapsing $F$ to the base point of $\Sigma_sE = E \wedge S^1$ and using the quotient map $E \wedge \Delta[1] \to E \wedge S^1$, we get $\delta_f: C(f) \to \Sigma_sE$, which gives
$\delta_f\circ \tilde{f} \circ f : E \to F \to  C(f) \to \Sigma_sE.$

\begin{lem}\label{lem:sst-tr}
Let $f: E \to F$ be a morphism in $\Spt_{(s,\mathfrak{p})}(\C)$. Then, the following is a push-out square:
\begin{equation}\label{eqn:push-out stuff}
\xymatrix{
E^{\sst} \ar[r]^<<<<<<{{\rm Id} \wedge 0} \ar[d]_{f^{\sst}} & E^{\sst} \wedge 
\Delta[1] \ar[d]^{{\bar{f}}^{\sst}} \\
F^{\sst} \ar[r]^{{\wt{f}}^{\sst}} & (C(f))^{\sst}.}
\end{equation}
\end{lem}
\begin{proof}
For a presheaf $G$ on $\Sm_{\C}$ of objects in $\Spc_{\bullet}$ or $\Spt$. Let $\bar{G}$ be its artificial extension on $\Sch_{\C}$ as in Definition \ref{def:extension}. This extends $(s,\mathfrak{p})$-bispectra over $\Sm_{\mathbb{C}}$ to $(s,\mathfrak{p})$-bispectra over $\Sch_{\mathbb{C}}$. Note the push-out of a diagram of presheaves of $(s,\mathfrak{p})$-bispectra is defined object-wise, and one has $\ov{E \wedge \Delta[1]} \simeq \bar{E} \wedge \Delta[1]$. 
Since the artificial extension is defined as a colimit and since the colimits commute among themselves (\emph{cf.} \cite[\S~IX.8]{ML}), the push-out \eqref{eqn:Cone} (a colimit) remains a push-out square if we replace the presheaves by their artificial extensions. So, we may assume the presheaves $E$ and $F$ are defined on $\Sch_{\C}$. The commutativity of two colimits also implies that the diagram  
\begin{equation}\label{eqn:Cone-1}
\xymatrix{
E(\Delta^n_{\rm top} \times -) \ar[r]^<<<<<<{{\rm Id} \wedge 0} \ar[d]_{f} & 
E(\Delta^n_{\rm top} \times -) \wedge \Delta[1] 
\ar[d]^{\bar{f}} \\
F(\Delta^n_{\rm top} \times -) \ar[r]^{\wt{f}} & C(f)(\Delta^n_{\rm top} \times -)}
\end{equation}
is a push-out square. Since
$\Hom_{\sC}(X \wedge \Delta[k]_{+}, Y ) \simeq \Hom_{\sC}(X, {\mathcal{H}om}_{\bullet}(\Delta[k]_{+}, Y))$
where $\sC$ is the category of $(s,\mathfrak{p})$-bispectra on $\Sch_{\C}$, we deduce that \eqref{eqn:Cone-1} remains a push-out square after smashing with $\Delta[k]_{+}$ for $k \ge 0$. Since a coequalizer (a colimit) commutes with colimits, by \S\ref{subsubsection:Total} we obtain a push-out square \eqref{eqn:push-out stuff} except $E^{\sst} \wedge \Delta[1]$ is replaced with $(E \wedge \Delta[1])^{\sst}$. But, by Proposition \ref{prop:sst and wedge}(4) and the isomorphism $\Delta[1] \simeq (\Delta[1])^{\sst}$, we do have $E^{\sst} \wedge \Delta[1] \simeq (E \wedge \Delta[1])^{\sst}$.
\end{proof}

\begin{thm}\label{thm:sst hat-bispec}
There exists a triangulated endo-functor $\hosst : \mathcal{SH} (\mathbb{C}) \to \mathcal{SH} (\mathbb{C})$, which coincides with the $\sst$-functor on $\mathbb{A}^1$-B.G. motivic $\Omega_T$-bispectra up to isomorphism. 
\end{thm}
\begin{proof}
By Theorem \ref{thm:BGA1 sst-bispec}, we know $\A^1$-B.G. motivic $\Omega_T$-bispectra are closed under $\sst$. By \cite[Lemma~2.3.8]{Morel}, the functor $\Sigma_T: \Spt(\C) \to \Spt(\C)$ preserves stable motivic weak-equivalences and cofibrations. Hence, $\Sigma_T$ is a left Quillen endo-functor with the right adjoint $\Omega_T: \Spt(\C) \to \Spt(\C)$. An $(s,\mathfrak{p})$-bispectrum $E = (E_0, E_1, \cdots )$ is stable motivic fibrant if and only if it is a motivic $\Omega_T$-bispectrum and it is $T$-level-wise $S^1$-stable motivic fibrant (\emph{cf.} \cite[Definition 3.1, Theorem 3.4]{Hovey}). So, a stable motivic fibrant $(s,\mathfrak{p})$-bispectrum is an $\mathbb{A}^1$-B.G. motivic $\Omega_T$-bispectrum. By Theorem \ref{thm:BGA1 sst-bispec}, we know $\sst$ takes a trivial stable motivic fibration between stable motivic fibrant $(s,\mathfrak{p})$-bispectra to a stable motivic weak-equivalence. Thus, by \cite[Proposition~8.4.8]{Hirschhorn} we obtain a right derived endo-functor $\hosst : \mathcal{SH} (\mathbb{C}) \to \mathcal{SH} (\mathbb{C})$ with desired properties.  
We now check that $\hosst : \mathcal{SH} (\mathbb{C}) \to \mathcal{SH} (\mathbb{C})$ is a triangulated functor. Since $\hosst$ preserves finite coproducts and products in $\mathcal{SH} (\mathbb{C})$, it is an additive functor. The shift $E \mapsto E[1]$ on $ \mathcal{SH} (\mathbb{C})$ is given by the functor $E \mapsto \Sigma_sE$. One sees that $\hosst$ commutes with $\Sigma_s$ by Proposition \ref{prop:sst and wedge} and the isomorphism $S^1 \simeq (S^1)^{\sst}$. For a distinguished triangle in $\mathcal{SH}(\C)$ of the form $E \to F \to C(f) \to \Sigma_s E$ for a map $f: E \to F$ in $\Spt_{(s,\mathfrak{p})}(\C)$ (\emph{cf.} \cite[\S 2.3]{Voevodsky Norway}), by Lemma \ref{lem:sst-tr} and the isomorphism $(\Sigma_sE)^{\sst} \simeq \Sigma_s E^{\sst}$, we deduce that $E^{\sst} \to  F^{\sst} \to (C(f))^{\sst} \to \Sigma_sE^{\sst}$ is also a distinguished triangle in $\mathcal{SH}(\C)$.
\end{proof}

\begin{defn}
For the rest of this paper, we call the functor $\hosst$ of Corollary \ref{cor:sst hat 1} and Theorem \ref{thm:sst hat-bispec} by the name \emph{homotopy semi-topologization} functor. For any $E$ in $ \mathcal{SH}_{S^1}(\mathbb{C})$ or $\mathcal{SH} (\mathbb{C})$, we denote $\hosst(E)$ by $E^{\hosst}$. 
\end{defn}

\section{Representing semi-topological $K$-theory in $\mathcal{SH} (\mathbb{C})$}\label{sec:representing-K}

We prove that the semi-topological $K$-theory of \cite{FW0} is representable in motivic homotopy categories. For $\mathcal{SH}_{S^1} (\C)$, it is easy by semi-topologizing an $\mathbb{A}^1$-B.G. presheaf of spectra representing the algebraic $K$-theory. For $\mathcal{SH}(\C)$, an essence is to find an $\mathbb{A}^1$-B.G. motivic $\Omega_T$-bispectrum that represents algebraic $K$-theory. See Proposition \ref{prop:A1BF K-alg}. 

The semi-topological $K$-theory of a complex variety $X$ is a bridge between the algebraic and the topological $K$-theories of $X$. 
This theory was defined in \cite{FW1} as the stable homotopy groups of an infinite loop space, constructed out of the stabilization of the analytic space of algebraic morphisms of complex varieties. In \cite{FW0}, another definition of the semi-topological $K$-theory is given by
$K^{\sst}_p(X) :=  \pi_p (|\sK (\Delta^{\bullet}_{\rm top} \times X ) | )$
for $p \in \Z$, where $\sK(-)$ is the presheaf of connective spectra on $\Sch_{\C}$ that represents Quillen algebraic $K$-theory. By \cite[Theorem~1.4]{FW0}, this definition coincides with the original one in \cite{FW1} for projective weakly normal varieties.

\subsection{Representability in $\mathcal{SH}_{S^1}(\C)$}\label{subsection:Alg-K}

Recall that \cite{Jardine K} (see also \cite{Kim}) constructed a presheaf of spectra on $\Sm_{\mathbb{C}}$ that represents the algebraic $K$-theory. This construction and some properties are summarized as follows, taken from {\cite[Theorem~5, Proposition~9]{Jardine K}, and \cite[Proposition 6.8, Theorem 10.8]{TT}}: 

\begin{thm}\label{thm:Jardine*}
There is a presheaf $\mathcal{K}$ of spectra on $\Sm_{\C}$ such that for $X \in \Sm_{\C}$, $\sK(X)$ represents the algebraic $K$-theory of $X$. This is a presheaf of $\Omega_s$-spectra above level zero, equipped with smash product $\sK_i \wedge \sK_j \to \sK_{i+j}$ which commutes with the bonding maps of $\sK$. Furthermore, $\sK$ is an $\A^1$-B.G. presheaf of spectra on $\Sm_{\C}$. 
\end{thm}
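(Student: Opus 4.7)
The plan is to recall and assemble Jardine's construction together with classical properties of algebraic $K$-theory, since the statement is essentially a recapitulation of results from the cited references. The starting difficulty is that Quillen's $Q$-construction applied to the exact category of vector bundles yields only a pseudo-functor on $\Sm_{\C}$, because pullback of vector bundles is associative only up to canonical isomorphism. Jardine's remedy is to replace vector bundles by \emph{big} vector bundles, which is a strictly functorial variant whose inclusion into the usual category of vector bundles is an equivalence of exact categories. Applying Waldhausen's $S_{\bullet}$-construction iteratively (or equivalently, looping Quillen's $Q$-construction) to this rigidified category produces a genuine presheaf of spectra $\sK$ on $\Sm_{\C}$ whose value on $X$ is level-wise weakly equivalent to Quillen's $K$-theory spectrum.

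For the $\Omega_s$-spectrum property above level zero, I would appeal to the fact that $\sK_n$ for $n \ge 1$ is constructed as the $n$-fold delooping of connective $K$-theory, so each adjoint bonding map $\sK_n \to \Omega_s \sK_{n+1}$ is an object-wise weak-equivalence by construction. Level zero is genuinely exceptional, because passing through it would involve negative $K$-groups which are not accounted for by the connective model.

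The smash product morphisms $\sK_i \wedge \sK_j \to \sK_{i+j}$ would come from the tensor product of big vector bundles, which is biexact in each variable. Via Waldhausen's machinery for pairings of $K$-theory spectra coming from biexact functors, this tensor product descends to a system of pairings on the $S_{\bullet}$-constructions, and these can be arranged to be strictly compatible with the bonding maps at each level.

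Finally, the $\A^1$-B.G. property is the combination of two classical inputs: $\A^1$-homotopy invariance of $K$-theory for regular noetherian schemes (a theorem of Quillen) gives the $\A^1$-weak-invariance, and the Nisnevich descent theorem for $K$-theory of smooth schemes, due to Thomason-Trobaugh, gives the B.G. property. I expect the main technical obstacle to lie not in any of these individual steps, each of which is well-documented, but in verifying that the descent and homotopy-invariance statements in the literature, which are typically formulated up to homotopy, transfer faithfully to the rigidified presheaf $\sK$; since Jardine's construction is object-wise weakly equivalent to Quillen's, this is a routine but necessary check.
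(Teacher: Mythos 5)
Your proposal matches the paper's treatment: the paper does not prove this statement but cites Jardine's big-vector-bundle rigidification \cite{Jardine K} for the strict presheaf of spectra, its deloopings and pairings, and Thomason--Trobaugh \cite{TT} together with Quillen's homotopy invariance for the $\A^1$-B.G. property, exactly the inputs you assemble. Your closing remark that these properties transfer to the rigidified model because they are invariant under object-wise weak-equivalences is the same routine check implicit in the paper's citation.
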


For representability of semi-topological $K$-theory in $\mathcal{SH}_{S^1}(\C)$ (and $\mathcal{H}_{\bullet}(\C)$), we have a quick answer. Let $\sK$ be presheaf of spectra on $\Sm_{\C}$ as in Theorem \ref{thm:Jardine*}.

\begin{prop}\label{prop:K rep s-mot}
Let $X \in \Sm_{\C}$ and $p \in \Z$. Then, we have $K^{\sst}_p(X) \simeq [\Sigma^{\infty}_sX_{+}[p], \sK^{\sst}]_{\A^1}.$
That is, the semi-topological $K$-theory is representable in $\mathcal{SH}  _{S^1}(\mathbb{C})$.
\end{prop} 
\begin{proof}
It holds by Corollary \ref{cor:Descent-CR-1}, Theorems \ref{thm:s resp cd-ext}, \ref{thm:Jardine*}, and the definition of $K_p ^{\sst}$.
\end{proof}

\begin{cor}\label{cor:K rep uns-mot}
For $X \in \Sm_{\C}$ and $p \ge 0$, we have $K^{\sst}_p(X) \simeq [\Sigma_s^p X_{+}, {\bf R}{Ev}_0\sK^{\sst}]_{\A^1}.$
That is, the semi-topological $K$-theory is representable in $\mathcal{H}_{\bullet} (\mathbb{C})$.
\end{cor}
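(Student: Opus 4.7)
The plan is to derive this corollary directly from Proposition~\ref{prop:K rep s-mot} by transporting the representability statement across the Quillen adjunction $(\Sigma_s^\infty, {\bf R}Ev^0_s)$ between $\sH_\bullet(\C)$ and $\sS\sH_{S^1}(\C)$ recalled in \S\ref{subsubsection:MMSpec}.

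First I would recall from Proposition~\ref{prop:K rep s-mot} that for $p \in \Z$ there is a natural isomorphism
\[
K^{\sst}_p(X) \xrightarrow{\simeq} [\Sigma_s^\infty X_+[p],\, \sK^{\sst}]_{\A^1}
\]
in $\sS\sH_{S^1}(\C)$. For $p \ge 0$, the shift functor $(-)[p] = \Sigma_s^p$ on $\sS\sH_{S^1}(\C)$ commutes with the infinite suspension, giving a canonical isomorphism $\Sigma_s^\infty X_+[p] \simeq \Sigma_s^\infty(\Sigma_s^p X_+)$ in $\sS\sH_{S^1}(\C)$; this uses the fact that $\Sigma_s^\infty$ preserves cofibrations and motivic weak-equivalences (see \S\ref{subsubsection:MMSpec}), so it passes to a left adjoint on homotopy categories.

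Next I would apply the derived adjunction
\[
\Sigma_s^\infty : \sH_\bullet(\C) \rightleftarrows \sS\sH_{S^1}(\C) : {\bf R}Ev^0_s,
\]
obtained from the Quillen pair $(\Sigma_s^\infty, Ev^0_s)$, to rewrite
\[
[\Sigma_s^\infty(\Sigma_s^p X_+),\, \sK^{\sst}]_{\A^1} \;\simeq\; [\Sigma_s^p X_+,\, {\bf R}Ev^0_s\,\sK^{\sst}]_{\A^1}.
\]
Chaining these two isomorphisms yields the required natural isomorphism $K_p^{\sst}(X) \xrightarrow{\simeq} [\Sigma_s^p X_+,\, {\bf R}Ev_0\,\sK^{\sst}]_{\A^1}$, proving representability of $K^{\sst}_p(X)$ in $\sH_\bullet(\C)$ by the pointed motivic space ${\bf R}Ev_0\,\sK^{\sst}$.

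There is essentially no real obstacle here: the content lies entirely in Proposition~\ref{prop:K rep s-mot} (which in turn rests on the recognition principle and Theorem~\ref{thm:s resp cd-ext}), and the present corollary is a formal consequence of the standard Quillen adjunction. The only minor point to mention is that $\sK^{\sst}$ need not itself be $S^1$-stable motivic fibrant, which is precisely why one applies the \emph{derived} evaluation functor ${\bf R}Ev^0_s$ (obtained by choosing an $S^1$-stable motivic fibrant replacement of $\sK^{\sst}$ and then evaluating at level zero).
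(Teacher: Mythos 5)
Your argument is correct and is exactly the deduction the paper intends: the corollary is stated without its own proof as a formal consequence of Proposition~\ref{prop:K rep s-mot}, obtained by rewriting $\Sigma_s^{\infty}X_+[p]\simeq \Sigma_s^{\infty}(\Sigma_s^p X_+)$ for $p\ge 0$ and applying the derived adjunction $\Sigma_s^{\infty}:\mathcal{H}_{\bullet}(\C)\leftrightarrow \mathcal{SH}_{S^1}(\C):{\bf R}Ev^0_s$ from \S~\ref{subsubsection:MMSpec}. Your remark that ${\bf R}Ev^0_s$ (rather than naive evaluation) is needed because $\sK^{\sst}$ need not be $S^1$-stable motivic fibrant is also the right point to flag.
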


\subsection{Representability in $\mathcal{SH}(\C)$}
\label{subsection:Alg-K-SP} 
For a presheaf of spectra $E = \left(E_0, E_1, \cdots \right)$ on $\Sm_{\C}$, let $E\{n\}$ be the presheaf of spectra $\left(E_n, E_{n+1}, \cdots \right)$. We use $\mathcal{K}$ of \S \ref{subsection:Alg-K} in what follows. Let $f: \sK \to \sK^{\rm fib} \leftarrow \sK^{\rm cf} : g $ be two morphisms in $\Spt(\C)$, where $f$ is an $S^1$-stable motivic fibrant replacement of $\sK$ and $g$ is an $S^1$-stable motivic cofibrant replacement of $\sK^{\rm fib}$. Since $\sK^{\rm fib}$ is motivic fibrant and $g$ is a motivic fibration, it follows that $\sK^{\rm cf}$ is motivic cofibrant-fibrant. Moreover, by Theorem \ref{thm:MSSpectra} and Corollary \ref{cor:A1BG consequence}, each $\sK^{\rm cf}\{n\}$ is motivic cofibrant-fibrant. By Theorem \ref{thm:Jardine*} and Corollary \ref{cor:A1BG consequence}, the maps $\sK_n \rightarrow \sK^{\rm fib}_n  \leftarrow \sK^{\rm cf}_n$ are object-wise weak-equivalences for each $n \ge 1$. Using the product structure on $\sK$ in Theorem \ref{thm:Jardine*}, we obtain a morphism of motivic spectra $\sK^{\rm cf}\wedge \sK^{\rm cf}_1 \to \sK^{\rm cf}\{1\}$ in $\mathcal{SH}_{S^1} (\C)$. This is equivalent to a morphism $\sK^{\rm cf} \to {\bf R}\Omega_{\sK^{\rm cf}_1} \sK^{\rm cf}\{1\} \simeq \Omega_{\sK^{\rm cf}_1} \sK^{\rm cf}\{1\}$ in $\mathcal{SH}_{S^1} (\C)$. Since $\sK^{\rm cf}$ is cofibrant and $\Omega_{\sK^{\rm cf}_1} \sK^{\rm cf}\{1\}$ is fibrant, this map lifts to a map in $\Spt(\C)$. Taking the adjoint of this map, we conclude that there is a morphism
$\phi: \sK^{\rm cf}\wedge \sK^{\rm cf}_1 \to \sK^{\rm cf}\{1\}$ in $\Spt(\C)$.
Thus, we obtained a cofibrant-fibrant motivic spectrum model $\sK^{\rm cf}$ for the algebraic $K$-theory, with a product that yields a ring structure on $K_* (X)$ for $X \in \Sm_{\mathbb{C}}$. 
The above product structure on the presheaf $\sK^{\rm cf}$  of spectra allows one to construct a $T$-spectrum that represents the algebraic $K$-theory in $\mathcal{SH}(\mathbb{C})$. For details, we refer to \cite{Kim}. To prove the representability of the semi-topological $K$-theory in $\mathcal{SH}(\mathbb{C})$, we lift this $T$-spectrum to an $(s,\mathfrak{p})$-bispectrum, for which we recycle the construction in \cite[\S6.2]{Voevodsky}.

\begin{lem}\label{lem:Thom-1}
Let $X \in \Sm_{\C}$. $(1)$ For $p \ge m \ge 0$, we have $[\Sigma_s^{p} X_{+}, \sK^{\rm cf}_m]_{\A^1} \simeq K_{p-m}(X),$
and a split exact sequence $0 \to [\Sigma_s^{p} \Sigma_T X_{+}, \sK^{\rm cf}_m]_{\A^1} \to K_{p-m}(\P^1_X) \to  K_{p-m}(X) \to 0.$ 

 $(2)$ For $0 \le p < m$, $[\Sigma_s^{p} X_{+}, \sK^{\rm cf}_m]_{\A^1} = [\Sigma_s^{p} \Sigma_T X_{+}, \sK^{\rm cf}_m]_{\A^1} = 0$ and there is a split exact sequence $0 \to [\Sigma_s^{p} \Sigma_T X_{+}, \sK^{\rm cf}_m]_{\A^1} \to K_{p-m}(\P^1_X) \to  K_{p-m}(X) \to 0.$
\end{lem}

\begin{proof}
For $p \ge 0$, the cofiber sequence $\Sigma^{\infty}_s \Sigma_s^p X_{+} \to \Sigma^{\infty}_s \Sigma_s^p  (\P^1_X)_{+} \to \Sigma^{\infty}_s \Sigma_s^p \Sigma_T X_{+}$ in $\mathcal{SH}_{S^1} (\mathbb{C})$ (\emph{cf.} \cite[Lemma~2.16]{Voevodsky Norway}) and Lemma \ref{lem:A1Nis-fib pi} give us a long exact sequence
$ \to [\Sigma^{\infty}_s \Sigma_s^{p} \Sigma_T X_{+}, \sK^{\rm cf}]_{\A^1} \to K_p(\P^1_X) \to K_p(X) \to,$
where the map $i^*_0: K_p (\P^1_X) \to K_p (X)$ splits by the pull-back via the projection $X \times \P^1 \to X$. Part (1) follows easily from this and the adjoint isomorphisms
$[\Sigma^{\infty}_s A, \sK^{\rm cf}]_{\A^1} \simeq [A, \sK^{\rm cf}_0]_{\A^1} \simeq  [A, \Omega^m_s\sK^{\rm cf}_m]_{\A^1} \simeq  [\Sigma_s^m A, \sK^{\rm cf}_m]_{\A^1}$
for $A \in \Spc_{\bullet}(\C)$. Notice here that $\sK^{\rm cf}$ and $\sK^{\rm cf}_m$ are all motivic (hence object-wise) fibrant
and $\sK^{\rm cf}$ is a motivic $\Omega_s$-spectrum. To prove the first part of $(2)$, first use Lemma \ref{lem:A1Nis-fib pi} and Corollary \ref{cor:A1BG consequence} to obtain isomorphisms
$[\Sigma_s^p X_{+}, \sK^{\rm cf}_m]_{\A^1}  \simeq  \pi_p (\sK^{\rm cf}_m(X) ) \simeq  \pi_{p-m} (\sK^{\rm cf}(X)),$
where the last term is zero if $p-m < 0$ since $\sK^{\rm cf}(X)$ is a connective spectrum. For the second part of $(2)$, from the cofiber sequence $\Sigma^{\infty}_s \Sigma_s^p X_{+} \to \Sigma^{\infty}_s \Sigma_s^p  (\P^1_X)_{+} \to \Sigma^{\infty}_s \Sigma_s^p \Sigma_T X_{+}$, we get
an exact sequence $[\Sigma^{\infty}_s \Sigma_s^{p+1} (\P^1_X)_{+}, \sK^{\rm cf}\{m\}]_{\A^1} \to 
[\Sigma^{\infty}_s \Sigma_s^{p+1} X_{+}, \sK^{\rm cf}\{m\}]_{\A^1} \to 
[\Sigma^{\infty}_s  \Sigma_s^{p} \Sigma_T X_{+}, \sK^{\rm cf}\{m\}]_{\A^1}
\to
[\Sigma^{\infty}_s \Sigma_s^{p} (\P^1_X)_{+}, \sK^{\rm cf}\{m\}]_{\A^1}.$
By Corollary \ref{cor:A1BG consequence} and the adjointness, this exact sequence is equivalent to
$[\Sigma_s^{p+1} (\P^1_X)_{+}, \sK^{\rm cf}_m]_{\A^1} \to [\Sigma_s^{p+1} X_{+}, \sK^{\rm cf}_m]_{\A^1} \to [\Sigma_s^{p} \Sigma_T \wedge X_{+}, \sK^{\rm cf}_m]_{\A^1} \to [\Sigma_s^{p} (\P^1_X)_{+}, \sK^{\rm cf}_m]_{\A^1}.$ It follows from $(1)$ and the first part of $(2)$ that the first map in this exact sequence is surjective and the last term is zero if $0 \le p < m$. Hence the third term must be zero.
\end{proof}

Recall the ring isomorphism ${K_0(\C)[t]}/{(t-1)^2} \simeq  K_0(\P^1_{\C})$. By Lemma \ref{lem:Thom-1}, the element $(t-1) = \left([\sO(1)] -[\sO]\right)$ defines a unique element $th \in  [S^1 \wedge T, \sK^{\rm cf}_1]_{\A^1}$, called the \emph{Thom class}. Since $S^1 \wedge T$ is cofibrant and $\sK^{\rm cf}_1$ is motivic fibrant, this yields a morphism $ S^1\wedge T \to \sK^{\rm cf}_1$ in $\Spc_{\bullet} (\C)$, thus a morphism $\theta : T \to \Omega_s \sK^{\rm cf}_1$ in $\Spc_{\bullet}(\C)$.

\begin{defn}\label{defn:SP-bispec-K}
Define $\sK^{\rm alg} = \{\sK^{\rm alg}_{m,n} \} \in \Spt_{(s,\mathfrak{p})}(\C)$ as $ ( \sK^{\rm cf}, \Omega^1_s \sK^{\rm cf}\{1\}, \Omega^2_s \sK^{\rm cf}\{2\}, \cdots )$ with the following bonding maps: for $A\in \Spt(\C), B \in \Spc_{\bullet}(\C)$, apply the map $\Omega_sA \wedge B \to \Omega_s(A \wedge B)$ repeatedly to get the morphisms $\Omega^n_s \sK^{\rm cf}\{n\} \wedge T \to \Omega^n_s (\sK^{\rm cf}\{n\} \wedge T )  \xrightarrow{\Omega^n_s({\rm Id} \wedge \theta)} \Omega^n_s (\sK^{\rm cf}\{n\} \wedge \Omega_s \sK^{\rm cf}_1 ) \to \Omega^{n+1}_s (\sK^{\rm cf}\{n\} \wedge \sK^{\rm cf}_1 ) \xrightarrow{\Omega^{n+1}_s({\rm Id} \wedge \phi)} \Omega^{n+1}_s\sK^{\rm cf}\{n+1\}.$
\end{defn}

\begin{prop}\label{prop:A1BF K-alg}
The $(s,\mathfrak{p})$-bispectrum $\sK^{\rm alg}$ on $\Sm_{\C}$ is an $\A^1$-B.G. motivic $\Omega_T$-bispectrum, and it represents the algebraic $K$-theory in $\mathcal{SH} (\mathbb{C})$.
\end{prop}
\begin{proof}
Since $\sK^{\rm alg}_{*,n} =  \Omega^n_s \sK^{\rm cf}\{n\}$ for each $n \ge 0$, by Corollary \ref{cor:A1BG consequence} we see $\sK^{\rm alg}$ satisfies the $\A^1$-B.G. property. To show that $\sK^{\rm alg}$ is a motivic $\Omega_T$-bispectrum, it suffices to show that each map $\sK^{\rm alg}_{m,n} \to \Omega_T \sK^{\rm alg}_{m, n+1}$ between motivic fibrant pointed motivic spaces is a motivic weak-equivalence. For this, it suffices to show using Corollary \ref{cor:A1BG consequence} and Lemma \ref{lem:A1Nis-fib pi} that for $X \in \Sm_{\C}$ and $p \ge 0$, the induced map $[\Sigma_s^p X_{+}, \Omega^n_s \sK^{\rm cf}_{m+n}]_{\A^1} \to [\Sigma_s^p X_{+}, \Omega_T \Omega^{n+1}_s \sK^{\rm cf}_{m+n+1}]_{\A^1}$ is an isomorphism, or that the map $[\Sigma_s^p X_{+}, \Omega^n_s \sK^{\rm cf}_{m+n}]_{\A^1} \to [\Sigma_s^p \Sigma_T X_{+}, \Omega^{n+1}_s \sK^{\rm cf}_{m+n+1}]_{\A^1}$ is an isomorphism. Using Corollary \ref{cor:A1BG consequence}, this is equivalent to that
$[\Sigma_s^{p} X_{+}, \sK^{\rm cf}_{m}]_{\A^1} \to [\Sigma_s^{p} \Sigma_T X_{+}, \sK^{\rm cf}_{m}]_{\A^1}$
is an isomorphism. But, by Lemma \ref{lem:Thom-1} and the definition of the Thom class, for $0 \le p < m$ both terms are zero, while for $p \ge m \ge 0$ this map is just the multiplication by the Thom class on the groups $K_{p-m}(X) \to K_{p-m}(\P^1_X, \{\infty\} \times X).$ This is an isomorphism by the projective bundle formula. The representability now follows using Theorems \ref{thm:descent for bispectrum}, \ref{thm:Jardine*} and Corollary \ref{cor:Descent-CR-1}.
\end{proof}


\begin{thm}\label{thm:K rep mot}
Let $X \in \Sm_{\C}$ and $p \in \Z$. We have
$K^{\sst}_p(X) \simeq \left[\Sigma^{\infty}_T \Sigma^{\infty}_s X_{+}[p], (\sK^{\rm alg})^{\hosst}\right]_{\A^1},
$
i.e. the semi-topological $K$-theory is representable in $\mathcal{SH} (\mathbb{C})$.
\end{thm}
\begin{proof}
By Theorems \ref{thm:BGA1 sst-bispec}, \ref{thm:sst hat-bispec} and  Proposition \ref{prop:A1BF K-alg}, we may replace $(\sK^{\rm alg})^{\hosst}$ in the above by $(\sK^{\rm alg})^{\sst}$. Let $f: (\sK^{\rm alg})^{\sst} \to F$ be a stable motivic fibrant replacement of $(\sK^{\rm alg})^{\sst}$. This $F = (F_0, F_1, \cdots)$ is a $T$-level-wise motivic fibrant motivic $\Omega_T$-bispectrum. We have isomorphisms $[\Sigma^{\infty}_T \Sigma^{\infty}_s X_{+}[p], (\sK^{\rm alg})^{\sst}]_{\A^1}\simeq [\Sigma^{\infty}_T \Sigma^{\infty}_s X_{+}[p], F ]_{\A^1} \simeq [\Sigma^{\infty}_s X_{+}[p], \Omega^{\infty}_T F]_{\A^1} = [\Sigma^{\infty}_s X_{+}[p], F_0]_{\A^1}$. By Theorems  \ref{thm:descent for bispectrum}, \ref{thm:BGA1 sst-bispec}, and Proposition \ref{prop:A1BF K-alg}, the map $f$ is a $T$-level-wise object-wise weak-equivalence. In particular, the map $(\sK^{\rm alg}_{*,0})^{\sst} = (\sK^{\rm alg})^{\sst}_{*,0} \to F_0$ is an object-wise weak-equivalence, so that we have $ [\Sigma^{\infty}_s X_{+}[p], F_0]_{\A^1} \simeq  [\Sigma^{\infty}_s X_{+}[p], (\sK^{\rm alg}_{*,0})^{\sst} ]_{\A^1}\simeq [\Sigma^{\infty}_s X_{+}[p], (\sK^{\rm cf})^{\sst}]_{\A^1}.$ By Theorems \ref{thm:A1Nis descent}, \ref{thm:rec prin}, and \ref{thm:Jardine*}, the maps $\sK^{\sst} \rightarrow (\sK^{\rm fib})^{\sst} \leftarrow (\sK^{\rm cf})^{\sst}$ are object-wise weak-equivalences, so the last group is $[\Sigma^{\infty}_s X_{+}[p], \sK^{\sst}]_{\A^1}.$ But, by Proposition \ref{prop:K rep s-mot}, this is $K^{\sst}_p(X)$.
\end{proof}

\section{Representing morphic cohomology in $\mathcal{SH} (\mathbb{C})$}\label{sec:representing-M}
The morphic cohomology $L^p H^q (X)$ for smooth quasi-projective schemes $X$ over $\C$ was introduced by \cite{FL}, as the homotopy groups of a function space. Later it was identified in \cite{FW2} as the homotopy group of the semi-topologization of the complex of Friedlander and Suslin. We show that the morphic cohomology is representable in $\mathcal{SH} (\mathbb{C})$ by homotopy semi-topologizing the motivic Eilenberg-MacLane spectrum of Voevodsky.

\subsection{Motivic Eilenberg-MacLane spectrum}\label{sec:MEMS}

Recall (\cite[p. 141]{FV}, {\cite[p. 126]{Voevodsky lecture}}) the following. Let $r \geq 0$ and let $f:Z \to U$ be a morphism, where each irreducible component of $Z$ dominates a component of $U$. We say \emph{$Z$ is equidimensional of relative dimension $r$ over $U$} if for every $s \in U$, the scheme-theoretic fiber $Z_s$ is either $ \emptyset$, or an equidimensional of dimension $r$. For $X \in \Sch_{\C}$ and $U \in \Sm_{\C}$, let $z_{\equi} (X, r)(U)$ be the group of cycles on $Z$ of $X \times U$ that are dominant and equidimensional of relative dimension $r$ over a component of $U$. This $z_{\equi}(X,r)$ is a presheaf (in fact an \'etale sheaf) on $\Sm_{\C}$. Let $\Delta^{\bullet}$ be the cosimplicial scheme, where $\Delta^n  = \Spec (\C[t_0, \cdots, t_n]) / (\sum_{i=0} ^n t_i -1)$, and $\partial_i ^n$ ($ 0 \leq i \leq n$) are the cofaces. For $U \in \Sm_{\C}$, and a presheaf $F$ of abelian groups on $\Sm_{\C}$, the simplicial abelian group $F(\Delta^{\bullet} \times U)$ has its associated chain complex $\un{C}_* F (U)$, namely, $\un{C}_n F(U) = F( \Delta^n \times U)$ with the differential $\sum_{i=0} ^n (-1)^i F(\partial_i ^n \times {\rm Id}_U)$. This $\un{C}_* F$ is a presheaf of chain complexes of abelian groups on $\Sm_{\C}$. For $n \geq 0$, the \emph{Friedlander-Suslin motivic complex} $\mathbb{Z}^{FS} (n) $ on $\Sm_{\C}$ is $ \un{C}_* z_{\equi} (\mathbb{A}^n, 0)$. (This definition of $\mathbb{Z}^{FS}(n)$ differs slightly from the one in \cite{Voevodsky lecture}, where $\mathbb{Z}^{FS} (n)$ is defined as $\un{C}_* z_{\equi} (\mathbb{A}^n, 0)[-2n]$.)
In what follows, we identify the presheaf $\mathbb{Z}^{FS} (n)$ with an object of $\Spc_{\bullet}(\C)$ via the Dold-Kan correspondence. Recall (\cite[\S 6.1]{Voevodsky}) that the motivic Eilenberg-MacLane spectrum $\mathbf{H}\mathbb{Z}$ is a sequence of pointed simplicial presheaves, whose $n$-th level is $K(\mathbb{Z}(n), 2n) = \un{C}_* L (T^n)$ for some functor $L$, with motivic weak-equivalences $K(\mathbb{Z} (n), 2n) \to \Omega_{T} K(\mathbb{Z} (n+1), 2n+2)$.  
For $X \in \Sm_{\C}$ and $U \in \Sm_{\C}$, $L(X) (U)$ is the group of cycles on $U \times X$, finite over $U$ and surjective over a connected component of $U$. This $L(X)$ is a presheaf on $\Sm_{\C}$. This $L$ even extends to $\Spc_{\bullet} (\C)$. Using the isomorphisms $T^n \simeq {\P^n}/{\P^{n-1}}$ and $\un{C}_* L(A/B) \simeq {\un{C}_* L(A)}/{\un{C}_*L(B)}$, we see that $K(\mathbb{Z} (n), 2n) \simeq {\un{C}_* L(\P^n)}/{\un{C}_* L(\P^{n-1})}$, which is isomorphic (via localization and Dold-Kan) to the presheave $\un{C}_* z_{\equi} (\mathbb{A}^n, 0) = \mathbb{Z}^{FS} (n)$ of complexes seen as an object in $\Spc_{\bullet} (\C)$. Thus, $\mathbf{H}\mathbb{Z}$ can be regarded as the motivic $T$-spectrum $(\mathbb{Z}^{FS}(0), \mathbb{Z}^{FS} (1), \cdots )$.

\subsection{$\A^1$-B.G. property $\mathbf{H}\mathbb{Z}$}\label{subsubsection:A1BGEMS}

For $\mathbb{Z}^{FS}(n)$, the $\mathbb{A}^1$-weak-invariance holds by \cite[Corollary 2.19]{Voevodsky lecture}, while the B.G. property follows from \cite[Proposition 4.3.9]{SV} combined with the proof of the Zariski Mayer-Vietoris property in \cite[Theorem 5.11]{FV}. Thus,

\begin{prop}\label{prop:EM BG}
The sheaves $\mathbb{Z}^{FS} (n)$ satisfy the $\A^1$-B.G. property on $\Sm_{\C}$.
\end{prop}

Recall from \S\ref{subsubsection:T-spec-M} that for a $T$-spectrum $E$, the associated $(s,\mathfrak{p})$-bispectrum $E$ is given by $\Sigma^{\infty}_sE =\left(\Sigma^{\infty}_s E_0, \Sigma^{\infty}_s E_1, \cdots \right)$. 

\begin{prop}\label{prop:EM BG-bispec}
The $(s,\mathfrak{p})$-bispectrum $\Sigma^{\infty}_s\mathbf{H}\mathbb{Z}$ satisfies the following properties. $(1)$ It is a $T$-level-wise object-wise $\Omega_s$-spectrum, i.e., $\Sigma^{\infty}_s \mathbb{Z}^{FS} (n)$ is an object-wise $\Omega_s$-spectrum for each $n \ge 0$. $(2)$ It is a $S^1$-level-wise motivic $\Omega_T$-spectrum, i.e., $\Sigma^n_s \mathbf{H}\mathbb{Z}$ is a motivic $\Omega_T$-spectrum for each $n \ge 0$. $(3)$ It satisfies the $\A^1$-B.G. property. $(4)$ The properties $(1) - (3)$ also hold for $(\Sigma^{\infty}_s\mathbf{H}\mathbb{Z})^{\sst}$.
\end{prop}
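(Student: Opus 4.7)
The four assertions split into structural facts about the naive $(s,p)$-bispectrum $\Sigma^{\infty}_s \mathbf{H}\mathbb{Z}$ (parts (1)--(3)) and the transfer of these facts to its semi-topologization (part (4)). Throughout, the key structural input is that each $\mathbb{Z}^{FS}(n)$ is a presheaf of simplicial abelian groups via Dold--Kan which is $\A^1$-B.G.\ by Proposition~\ref{prop:EM BG}, and that the construction recalled in \S~\ref{subsubsection:EMS} provides adjoint bonding maps $\mathbb{Z}^{FS}(k) \to \Omega_T \mathbb{Z}^{FS}(k+1)$ that are motivic weak-equivalences.

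I would prove (3) first. The $\A^1$-weak-invariance of each level $S^k \wedge \mathbb{Z}^{FS}(n)$ is immediate, since smashing the $\A^1$-weak-equivalence given by Proposition~\ref{prop:EM BG} with $S^k$ is again a weak-equivalence. For the B.G.\ property, an elementary Nisnevich square applied to $\mathbb{Z}^{FS}(n)$ yields a homotopy cartesian square of simplicial abelian groups; since this category is additive, the square is equally homotopy cocartesian, and the left adjoint $\Sigma^{\infty}_s$ carries it to a homotopy cocartesian square of spectra, which is again cartesian in the stable category. For (2), I factor the adjoint bonding map of $\Sigma^n_s \mathbf{H}\mathbb{Z}$ as $S^n \wedge \mathbb{Z}^{FS}(k) \xrightarrow{\id \wedge \wt{\tau}} S^n \wedge \Omega_T \mathbb{Z}^{FS}(k+1) \to \Omega_T(S^n \wedge \mathbb{Z}^{FS}(k+1))$; the first arrow is a motivic weak-equivalence because $\wt{\tau}$ is one by \S~\ref{subsubsection:EMS} and $\Sigma^n_s$ preserves motivic weak-equivalences, while the second is the interchange map, which becomes an isomorphism in $\mathcal{SH}(\mathbb{C})$ because $T$ is stably invertible, and which one checks is already a motivic weak-equivalence by comparing motivic fibrant replacements.

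For (1), I will exploit the simplicial abelian group structure of $\mathbb{Z}^{FS}(n)$: for such a presheaf $A$, the simplicial suspension spectrum $\Sigma^{\infty}_s A$ corresponds object-wise to an Eilenberg--MacLane-type model of the normalized chain complex of $A$, and the adjoint bonding maps $S^k \wedge A \to \Omega_s(S^{k+1} \wedge A)$ become object-wise weak-equivalences after this identification. This is the principal obstacle: the delicate point is reconciling the naive simplicial suspension with its Eilenberg--MacLane model in a manner compatible with the $(s,p)$-bispectrum structure of $\mathbf{H}\mathbb{Z}$, and this is what forces one to use properties specific to the Friedlander--Suslin complex rather than general simplicial suspension arguments.

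Finally, for (4) I would apply Theorem~\ref{thm:BGA1 sst-bispec} to (2) and (3) to conclude that $(\Sigma^{\infty}_s\mathbf{H}\mathbb{Z})^{\sst}$ is an $\A^1$-B.G.\ motivic $\Omega_T$-bispectrum. For the object-wise $\Omega_s$-spectrum property in the $\sst$-version, I invoke Remark~\ref{remk:Circle}, which gives a canonical isomorphism $(\Omega_s E)^{\sst} \simeq \Omega_s(E^{\sst})$ for any presheaf of spectra $E$ on $\Sm_{\C}$, combined with Theorem~\ref{thm:rec prin}, which ensures that $\sst$ carries object-wise weak-equivalences to object-wise weak-equivalences on $\Sm_{\C}$; applying both to each adjoint bonding map in (1) gives the conclusion.
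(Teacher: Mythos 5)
Your treatment of (2) and of the $\Omega_T$/$\A^1$-B.G.\ half of (4) follows essentially the paper's route, but the remaining steps have genuine gaps, and the largest one is the point you yourself flag and leave open. In (1), asserting that $\Sigma^{\infty}_s A$ ``corresponds object-wise to an Eilenberg--MacLane-type model'' of the chain complex of $A$ is precisely what has to be proved: the suspension spectrum of the underlying pointed simplicial set of a simplicial abelian group is not an $\Omega$-spectrum for free, and what rescues it is nothing specific to the Friedlander--Suslin complex but the simplicial abelian group structure itself -- the paper identifies $S^1\wedge\mathbb{Z}^{FS}(n)$ with the tensor $S^1\otimes\mathbb{Z}^{FS}(n)$ under Dold--Kan and cites \cite[Lemma~4.53]{GJ}. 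Your proof of (3) then bypasses (1) by two implications that are false in general. First, in simplicial abelian groups (a connective, hence non-stable, additive category) a homotopy cartesian square need not be homotopy cocartesian: the square with three corners $0$ and fourth corner the constant group $\Z$ is homotopy cartesian but not cocartesian; what holds for $\mathbb{Z}^{FS}(n)$ is the stronger Mayer--Vietoris distinguished triangle underlying Proposition~\ref{prop:EM BG}, which additivity alone does not give. Second, $\Sigma^{\infty}_s$ preserves squares that are homotopy cocartesian as pointed simplicial sets, not squares cocartesian in the abelian (derived) sense: for $0\leftarrow \Z/2\to 0$ the abelian homotopy pushout is $K(\Z/2,1)$, and $\Sigma^{\infty}_sK(\Z/2,1)\not\simeq \Sigma_s\Sigma^{\infty}_s(\Z/2)$. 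The paper's proof of (3) is structured differently and uses (1) essentially: level-wise $\A^1$-B.G.\ together with the object-wise $\Omega_s$-property feed into Corollary~\ref{cor:motivic descent S^1 version} and Theorem~\ref{thm:A1Nis descent}.

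In (4), your argument for the object-wise $\Omega_s$-property of the semi-topologization applies Theorem~\ref{thm:rec prin} and Remark~\ref{remk:Circle} to the individual levels $S^k\wedge\mathbb{Z}^{FS}(n)$ and their $\Omega_s$-loops; both are stated (and used in the paper) for presheaves of spectra, not of pointed simplicial sets, and the space-level commutation of $\Omega_s$ with the realization over $\Delta^{\bullet}_{\rm top}$ is exactly the kind of statement that requires either a Bousfield--Friedlander type argument or the abelian structure, so the citations do not cover it. The paper sidesteps this: $(\mathbb{Z}^{FS}(n))^{\sst}$ is again a presheaf of simplicial abelian groups, it is $\A^1$-B.G.\ by Theorem~\ref{thm:s resp cd-ext}, and the same \cite[Lemma~4.53]{GJ} argument as in (1), combined with $(\Sigma_s(-))^{\sst}\simeq\Sigma_s((-)^{\sst})$, gives the $T$-level-wise object-wise $\Omega_s$-property, while the $\Omega_T$-part comes from (2) and Theorem~\ref{thm:BGA1 sst-bispec}, as you say. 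Finally, in (2) your factorization coincides with the paper's, but justifying the interchange map $\Sigma_s\Omega_T E\to\Omega_T\Sigma_s E$ by stable invertibility of $T$ does not yield the needed unstable motivic weak-equivalence of spaces; ``one checks by comparing fibrant replacements'' is the assertion, not an argument, so this step needs to be supplied (the paper asserts the object-wise statement directly).
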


\begin{proof}
For a simplicial abelian group $A$ and $K\in \Spc$, there is a simplicial abelian group $K \otimes A$, given by $\Z[K_n] \otimes_{\mathbb{Z}}  A_n$ at level $n$, where $\Z[K_n]$ is the free abelian group on $K_n$. The pointed motivic space $S^1 \wedge \mathbb{Z}^{FS} (n)$ corresponds to the presheaf $S^1 \otimes \mathbb{Z}^{FS} (n)$ of simplicial abelian groups under Dold-Kan correspondence. It follows from \cite[Lemma~4.53]{GJ} that $\Sigma^{\infty}_s \mathbb{Z}^{FS} (n)$ is an object-wise $\Omega_s$-spectrum. This proves (1). Part (2) follows from \cite[Theorem~6.2]{Voevodsky} and the facts that $\Sigma_s$ preserves motivic weak-equivalences and that the map $\Sigma_s (\Omega_T E) \to \Omega_T(\Sigma_s E)$ is an object-wise weak-equivalence for $E\in \Spc_{\bullet}(\C)$. Part (3) is equivalent to that $\Sigma^{\infty}_s \mathbb{Z}^{FS} (n)$ is $\A^1$-B.G. presheaf of spectra. This follows from Proposition \ref{prop:EM BG}, Part (1), Corollary \ref{cor:motivic descent S^1 version} and Theorem \ref{thm:A1Nis descent}. For (4), the $\A^1$-B.G. property of $(\Sigma^{\infty}_s\mathbf{H}\mathbb{Z})^{\sst}$ follows from Part (3) and Theorem \ref{thm:s resp cd-ext}. Furthermore, Proposition \ref{prop:EM BG} and Theorem \ref{thm:s resp cd-ext} show that each $(\mathbb{Z}^{FS} (n))^{\sst}$ is $\A^1$-B.G. We deduce from \cite[Lemma~4.53]{GJ} that $\Sigma^{\infty}_s (\mathbb{Z}^{FS} (n) )^{\sst}$ is an object-wise $\Omega_s$-spectrum. The isomorphism $(\Sigma_s(-))^{\sst} \simeq \Sigma_s (-)^{\sst}$ now implies that $(\Sigma^{\infty}_s\mathbf{H}\mathbb{Z} )^{\sst}$ is a $T$-level-wise object-wise $\Omega_s$-spectrum. That it is an $\Omega_T$-bispectrum follows from Part (2) and Theorem \ref{thm:BGA1 sst-bispec}(2). 
\end{proof}

For $E = (E_0, E_1, \cdots ) \in \Spt_{(s,\mathfrak{p})}(\C)$, with $E_i \in \Spt(\C)$, $E\{m\} \in \Spt_{(s,\mathfrak{p})}(\C)$ is $(E_m, E_{m+1}, \cdots )$. By \cite[Lemma~3.8, Theorem~3.9]{Hovey}, $s_{-} : E \mapsto E\{1\} $ is a right Quillen endo-functor on $\Spt_{(s,\mathfrak{p})}(\C)$ and we have isomorphisms of functors $\Sigma_T \simeq {\bf L}\Sigma_T \simeq {\bf R}{s_{-}}$ on $\mathcal{SH} (\mathbb{C})$. Recall (\S \ref{subsubsection:T-spec-M}) that there are adjoint functors $\Sigma^{\infty}_T : \mathcal{SH}_{S^1} (\C) \leftrightarrow \mathcal{SH} (\C): {\bf R}\Omega^{\infty}_T$. 

\begin{cor}\label{cor:EM BG-bispec-*}
In $\mathcal{SH}_{S^1}(\mathbb{C})$, we have $\Sigma^{\infty}_s \left(\mathbb{Z}^{FS} (n)\right)^{\sst} \simeq {\bf R}{\Omega^{\infty}_T}\Sigma^n_T  \left(\Sigma^{\infty}_s\mathbf{H}\mathbb{Z}\right)^{\hosst}$.
\end{cor}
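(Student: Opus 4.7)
The plan is to reduce both sides of the claimed isomorphism to the motivic spectrum $\Sigma^{\infty}_s(\mathbb{Z}^{FS}(n))^{\sst}$ by passing through a stable motivic fibrant replacement of $(\Sigma^{\infty}_s\mathbf{H}\Z)^{\sst}$. The starting point is Proposition~\ref{prop:EM BG-bispec}(4), which guarantees that $(\Sigma^{\infty}_s\mathbf{H}\Z)^{\sst}$ is an $\A^1$-B.G.\ motivic $\Omega_T$-bispectrum. By Theorem~\ref{thm:sst hat-bispec}, this implies $(\Sigma^{\infty}_s\mathbf{H}\Z)^{\hosst}\simeq(\Sigma^{\infty}_s\mathbf{H}\Z)^{\sst}$ in $\mathcal{SH}(\C)$, so the right-hand side becomes ${\bf R}\Omega^{\infty}_T\Sigma^n_T(\Sigma^{\infty}_s\mathbf{H}\Z)^{\sst}$.

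Next, I would use the identification $\Sigma_T\simeq{\bf R}s_{-}$ on $\mathcal{SH}(\C)$ recalled just before the corollary to interpret $\Sigma^n_T$ as the shift $s_{-}^n$ computed on a stable motivic fibrant replacement. Let $(\Sigma^{\infty}_s\mathbf{H}\Z)^{\sst}\to F$ be such a replacement. Since the source is an $\A^1$-B.G.\ motivic $\Omega_T$-bispectrum, Theorem~\ref{thm:descent for bispectrum} (equivalently Theorem~\ref{thm:BGA1 sst-bispec}(3)) shows that this map is a $T$-level-wise object-wise weak-equivalence; in particular each $F_m$ is object-wise weakly equivalent to $(\Sigma^{\infty}_s\Z^{FS}(m))^{\sst}$ and is itself $S^1$-stable motivic fibrant. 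Consequently the shift $F\{n\}=(F_n,F_{n+1},\dots)$ is again stable motivic fibrant, so by the explicit description of ${\bf R}\Omega^{\infty}_T$ recalled in \S\ref{subsubsection:T-spec-M} we obtain
\[
{\bf R}\Omega^{\infty}_T\Sigma^n_T(\Sigma^{\infty}_s\mathbf{H}\Z)^{\sst}\simeq F_n\quad\text{in}\ \mathcal{SH}_{S^1}(\C).
\]

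Finally, to identify $F_n$ with $\Sigma^{\infty}_s(\Z^{FS}(n))^{\sst}$, I would apply Proposition~\ref{prop:sst and wedge}(5) level-wise to $\Sigma^{\infty}_s\Z^{FS}(n)=(S^k\wedge\Z^{FS}(n))_{k\ge0}$, together with the elementary observation that the semi-topologization of the constant presheaf $S^k$ is again $S^k$ (its defining diagonal is the diagonal of a constant simplicial object). This yields a natural identification of presheaves of spectra $(\Sigma^{\infty}_s\Z^{FS}(n))^{\sst}=\Sigma^{\infty}_s(\Z^{FS}(n))^{\sst}$. Composing with the object-wise equivalence $(\Sigma^{\infty}_s\Z^{FS}(n))^{\sst}\simeq F_n$ from the previous step gives the desired $S^1$-stable motivic equivalence.

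The main technical point to watch is the identification of $\Sigma^n_T$ followed by ${\bf R}\Omega^{\infty}_T$ on the bispectrum level: one needs both a stable motivic fibrant replacement of the a priori non-fibrant $(\Sigma^{\infty}_s\mathbf{H}\Z)^{\sst}$ and a way to extract its $n$-th motivic spectrum up to $S^1$-stable motivic weak-equivalence without actually computing the replacement. The $\A^1$-B.G.\ and motivic $\Omega_T$-bispectrum properties of $(\Sigma^{\infty}_s\mathbf{H}\Z)^{\sst}$ established in Proposition~\ref{prop:EM BG-bispec}(4) are exactly what enable the motivic descent theorem for bispectra (Theorem~\ref{thm:descent for bispectrum}) to be applied, bypassing this obstacle.
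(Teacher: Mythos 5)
Your proof is correct and takes essentially the same route as the paper's: a stable motivic fibrant replacement $f\colon (\Sigma^{\infty}_s\mathbf{H}\mathbb{Z})^{\sst}\to F$, the descent theorem for $\mathbb{A}^1$-B.G.\ motivic $\Omega_T$-bispectra (via Proposition~\ref{prop:EM BG-bispec}(4)) to make $f$ a $T$-level-wise object-wise weak-equivalence, the identification $\Sigma_T\simeq{\bf R}s_{-}$, and the commutation of the $\sst$-functor with $\Sigma_s$. The only, immaterial, difference is organizational: you evaluate ${\bf R}\Omega^{\infty}_T$ directly on the fibrant shift $F\{n\}$ and transport back along $f_n$, whereas the paper first rewrites $F\{n\}\simeq(\Sigma^{\infty}_s\mathbf{H}\mathbb{Z}\{n\})^{\sst}$ and then applies $\Omega^{\infty}_T$ to that object.
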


\begin{proof}
Let $f: \left(\Sigma^{\infty}_s\mathbf{H}\mathbb{Z}\right)^{\sst} \to F$ be a stable motivic fibrant replacement. By Proposition \ref{prop:EM BG-bispec} and Theorem \ref{thm:BGA1 sst-bispec}, $f$ is $T$-level-wise object-wise weak-equivalence of $\A^1$-B.G. $(s,\mathfrak{p})$-bispectra. This implies $\Sigma^n_T (\Sigma^{\infty}_s\mathbf{H}\mathbb{Z} )^{\hosst} \simeq {\bf R}^n{s_{-}} (\Sigma^{\infty}_s\mathbf{H}\mathbb{Z})^{\hosst}
\simeq F\{n\}\simeq   (\Sigma^{\infty}_s\mathbf{H}\mathbb{Z})^{\sst}\{n\} \simeq (\Sigma^{\infty}_s\mathbf{H}\mathbb{Z} \{n\} )^{\sst}.$
Applying Proposition \ref{prop:EM BG-bispec} and Theorem \ref{thm:BGA1 sst-bispec} once again, we get
${\bf R}{\Omega^{\infty}_T}\Sigma^n_T \left(\Sigma^{\infty}_s\mathbf{H}\mathbb{Z}\right)^{\hosst} \simeq
\Omega^{\infty}_T ((\Sigma^{\infty}_s\mathbf{H}\mathbb{Z} \{n\})^{\sst})
\simeq  {Ev}_0 ((\Sigma^{\infty}_s\mathbf{H}\mathbb{Z} \{n\})^{\sst}) \simeq (\Sigma^{\infty}_s \mathbb{Z}^{FS} (n))^{\sst}.$
Since $(\Sigma_s(-))^{\sst} \simeq \Sigma_s (-)^{\sst}$, the corollary follows.
\end{proof}

\begin{cor}\label{cor:Suspension-HZ}
In $\mathcal{SH}(\mathbb{C})$, we have
$ \left(\Sigma^n_T \Sigma^{\infty}_s\mathbf{H}\mathbb{Z}\right)^{\hosst} \simeq \Sigma^n_T \left(\Sigma^{\infty}_s\mathbf{H}\mathbb{Z}\right)^{\hosst}.$
\end{cor}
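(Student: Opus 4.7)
The approach is to reduce the statement to the fact that the $\sst$-functor on $(s,p)$-bispectra is defined term by term (see \S\ref{sec:sst T}) and therefore strictly commutes with the index shift $E \mapsto E\{n\} = (E_n, E_{n+1}, \ldots)$, while the $T$-suspension in $\mathcal{SH}(\mathbb{C})$ is modeled by this shift on the $\A^1$-B.G. motivic $\Omega_T$-bispectra at hand.

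The first step I would establish is the preliminary identification that, for any $\A^1$-B.G. motivic $\Omega_T$-bispectrum $E$ and any $n \ge 0$, one has $\Sigma^n_T E \simeq E\{n\}$ in $\mathcal{SH}(\mathbb{C})$: if $E \to F$ is a stable motivic fibrant replacement, then by Theorem~\ref{thm:descent for bispectrum} it is a $T$-level-wise object-wise weak-equivalence, so the induced map $E\{n\} \to F\{n\}$ is one as well, and in particular a stable motivic weak-equivalence; and since $F\{n\}$ remains stable motivic fibrant, the identifications $\Sigma_T \simeq \mathbf{L}\Sigma_T \simeq \mathbf{R}s_{-}$ recalled in the proof of Theorem~\ref{thm:sst hat-bispec} give $\Sigma^n_T E \simeq \Sigma^n_T F \simeq F\{n\} \simeq E\{n\}$. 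By Proposition~\ref{prop:EM BG-bispec}(3)--(4) and Theorem~\ref{thm:BGA1 sst-bispec} this applies both to $\Sigma^{\infty}_s\mathbf{H}\mathbb{Z}$ and to $(\Sigma^{\infty}_s\mathbf{H}\mathbb{Z})^{\sst}$; moreover the $\A^1$-B.G. and $\Omega_T$ conditions are visibly inherited by the index shift, so Theorem~\ref{thm:sst hat-bispec} allows one to compute $\hosst$ on $(\Sigma^{\infty}_s\mathbf{H}\mathbb{Z})\{n\}$ by $\sst$.

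Splicing these observations with the tautological equality $((\Sigma^{\infty}_s\mathbf{H}\mathbb{Z})\{n\})^{\sst} = ((\Sigma^{\infty}_s\mathbf{H}\mathbb{Z})^{\sst})\{n\}$ in $\Spt_{(s,p)}(\mathbb{C})$ yields the chain
\begin{align*}
(\Sigma^n_T\Sigma^{\infty}_s\mathbf{H}\mathbb{Z})^{\hosst}
&\simeq \bigl((\Sigma^{\infty}_s\mathbf{H}\mathbb{Z})\{n\}\bigr)^{\hosst}
\simeq \bigl((\Sigma^{\infty}_s\mathbf{H}\mathbb{Z})\{n\}\bigr)^{\sst} \\
&= \bigl((\Sigma^{\infty}_s\mathbf{H}\mathbb{Z})^{\sst}\bigr)\{n\}
\simeq \Sigma^n_T(\Sigma^{\infty}_s\mathbf{H}\mathbb{Z})^{\sst}
\simeq \Sigma^n_T(\Sigma^{\infty}_s\mathbf{H}\mathbb{Z})^{\hosst}
\end{align*}
in $\mathcal{SH}(\mathbb{C})$, which is the asserted isomorphism. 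The only nontrivial step is the preliminary identification $\Sigma^n_T(-) \simeq (-)\{n\}$ on $\A^1$-B.G. motivic $\Omega_T$-bispectra, which is a direct consequence of Theorem~\ref{thm:descent for bispectrum}; the rest is formal.
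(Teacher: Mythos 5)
Your proof is correct and follows essentially the same route as the paper: you identify $\Sigma^n_T$ with the shift $E \mapsto E\{n\}$ on $\A^1$-B.G. motivic $\Omega_T$-bispectra (via ${\bf R}s_{-}$ and Theorem~\ref{thm:descent for bispectrum}), use that $\sst$ is defined term-wise so commutes strictly with the shift, and invoke Proposition~\ref{prop:EM BG-bispec} together with Theorem~\ref{thm:BGA1 sst-bispec} to replace $\hosst$ by $\sst$ on both $\Sigma^{\infty}_s\mathbf{H}\mathbb{Z}\{n\}$ and $(\Sigma^{\infty}_s\mathbf{H}\mathbb{Z})^{\sst}$, exactly as in the paper's chain ~\eqref{eqn:Suspension-HZ-0} and ~\eqref{eqn:EM BG-bispec-*0}. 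The only cosmetic difference is that you package the shift identification as a standalone lemma (and the fact $\Sigma_T \simeq {\bf R}s_{-}$ is recalled in \S~\ref{sec:representing-M} before Corollary~\ref{cor:EM BG-bispec-*}, not in the proof of Theorem~\ref{thm:sst hat-bispec}).
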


\begin{proof}
Under the notations of the proof of Corollary \ref{cor:EM BG-bispec-*}, we get $(\Sigma^n_T \Sigma^{\infty}_s\mathbf{H}\mathbb{Z})^{\hosst}  \simeq  ({\bf R}^n s_{-} \Sigma^{\infty}_s\mathbf{H}\mathbb{Z} )^{\hosst}$. Here, this is isomorphic to $(\Sigma^{\infty}_s\mathbf{H}\mathbb{Z}\{n\})^{\hosst}$ by Proposition \ref{prop:EM BG-bispec} and Theorem \ref{thm:descent for bispectrum}. This equals to $(\Sigma^{\infty}_s\mathbf{H}\mathbb{Z}\{n\})^{\sst}$ by Proposition \ref{prop:EM BG-bispec}. But, in the proof of Corollary \ref{cor:EM BG-bispec-*}, we saw this is $\Sigma^n_T (\Sigma^{\infty}_s\mathbf{H}\mathbb{Z})^{\hosst}.$
\end{proof}

\begin{thm}\label{thm:morphic rep}
Let $X$ be a smooth quasi-projective scheme over $\C$ and let $n \ge 0$ and $p \in \Z$. Then, 
$
L^n H^{2n-p}(X) \simeq [\Sigma^{\infty}_T \Sigma^{\infty}_s X_{+}[p], \Sigma^n_T (\Sigma^{\infty}_s\mathbf{H}\mathbb{Z})^{\hosst} ]_{\A^1}.
$
That is, the morphic cohomology of smooth quasi-projective schemes is representable in $\mathcal{SH} (\mathbb{C})$.
\end{thm}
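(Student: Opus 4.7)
The strategy is to reduce the claim to the level of $S^1$-spectra and then transport it to $\mathcal{SH}(\mathbb{C})$ via the adjunction ~\eqref{eqn:D-adjoint}, using Corollary~\ref{cor:EM BG-bispec-*} as the key ingredient. The starting point is the theorem of Friedlander--Walker \cite{FW2}, which identifies the morphic cohomology of a smooth quasi-projective $X/\C$ as
\[
L^n H^{2n-p}(X) \simeq \pi_p\bigl((\Z^{FS}(n))^{\sst}(X)\bigr),
\]
where $(\Z^{FS}(n))^{\sst}(X)$ is viewed, via Dold--Kan, as a pointed simplicial set (equivalently, a connective $\Omega_s$-spectrum because it is a simplicial abelian group). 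Thus the left-hand side equals $\pi_p$ at $X$ of the presheaf of spectra $\Sigma^{\infty}_s(\Z^{FS}(n))^{\sst}$.

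Next I would verify that $\Sigma^{\infty}_s(\Z^{FS}(n))^{\sst}$ is an $\A^1$-B.G. motivic spectrum. By Proposition~\ref{prop:EM BG-bispec}(3), $\Sigma^{\infty}_s \Z^{FS}(n)$ is $\A^1$-B.G., so by Theorem~\ref{thm:s resp cd-ext} (together with the identification $(\Sigma^{\infty}_s E)^{\sst} \simeq \Sigma^{\infty}_s(E^{\sst})$ coming from Proposition~\ref{prop:sst and wedge}(5)) so is its semi-topologization. By Proposition~\ref{prop:EM BG-bispec}(4), it is moreover an object-wise $\Omega_s$-spectrum. Corollary~\ref{cor:Descent-CR-1} then yields, for every $p \in \Z$,
\[
\pi_p\bigl((\Z^{FS}(n))^{\sst}(X)\bigr) \simeq \bigl[\Sigma^{\infty}_s X_{+}[p],\ \Sigma^{\infty}_s(\Z^{FS}(n))^{\sst}\bigr]_{\A^1}.
\]

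Now I would apply Corollary~\ref{cor:EM BG-bispec-*}, which supplies the isomorphism $\Sigma^{\infty}_s(\Z^{FS}(n))^{\sst} \simeq {\bf R}\Omega^{\infty}_T\,\Sigma^n_T(\Sigma^{\infty}_s\mathbf{H}\Z)^{\hosst}$ in $\mathcal{SH}_{S^1}(\C)$. Substituting and invoking the Quillen adjunction $\Sigma^{\infty}_T \dashv {\bf R}\Omega^{\infty}_T$ of ~\eqref{eqn:D-adjoint}, the right-hand side of the previous display becomes
\[
\bigl[\Sigma^{\infty}_T\Sigma^{\infty}_s X_{+}[p],\ \Sigma^n_T(\Sigma^{\infty}_s\mathbf{H}\Z)^{\hosst}\bigr]_{\A^1},
\]
which is the claimed formula. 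Since $X$ is smooth, $\Sigma^{\infty}_T\Sigma^{\infty}_sX_{+}$ is a compact generator in $\mathcal{SH}(\C)$ and the indicated hom set genuinely represents the morphic cohomology theory.

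The main obstacle is not in the chain of isomorphisms, which is essentially formal once \S\ref{sec:sst BG} and Corollary~\ref{cor:EM BG-bispec-*} are in place, but rather in the bookkeeping of bidegrees. One must check that the Friedlander--Walker indexing $L^nH^{2n-p}(X) \cong \pi_p$ is consistent with the convention of Remark~\ref{remk:SF-shift} (where $\Z^{FS}(n) = \un{C}_*z_{\equi}(\A^n,0)$ is defined without the $[-2n]$ shift of \cite{Voevodsky lecture}); this is precisely what forces the twist on the right-hand side to be $\Sigma^n_T$ rather than $\Sigma^{2n,n}$, since one $T$-suspension contributes bidegree $(2,1)$ and already absorbs the $2n$ in $H^{2n-p}$. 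Beyond this careful matching of shifts, no further geometric input is required.
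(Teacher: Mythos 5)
Your proposal is correct and follows essentially the same route as the paper's own proof: both reduce via the adjunction ~\eqref{eqn:D-adjoint} and Corollary~\ref{cor:EM BG-bispec-*} to a statement about $\Sigma^{\infty}_s(\Z^{FS}(n))^{\sst}$ in $\mathcal{SH}_{S^1}(\C)$, then use Proposition~\ref{prop:EM BG-bispec} together with Corollary~\ref{cor:Descent-CR-1} and the Friedlander--Walker identification from \cite{FW2} to compute the resulting hom set as $\pi_p\bigl((\Z^{FS}(n))^{\sst}(X)\bigr) \simeq L^nH^{2n-p}(X)$. The only difference is that you run the chain of isomorphisms from the morphic-cohomology side rather than from the $\mathcal{SH}(\C)$ side, which is purely cosmetic.
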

\begin{proof}We have $[\Sigma^{\infty}_T \Sigma^{\infty}_s X_{+} [p], \Sigma^n_T (\Sigma^{\infty}_s\mathbf{H}\mathbb{Z})^{\hosst}]_{\A^1}\simeq  [\Sigma^{\infty}_s X_{+} [p], {\bf R}{\Omega^{\infty}_T} \Sigma^n_T (\Sigma^{\infty}_s\mathbf{H}\mathbb{Z} )^{\hosst} ]_{\A^1}  \simeq  [\Sigma^{\infty}_s X_{+}[p], \Sigma^{\infty}_s (\mathbb{Z}^{FS} (n))^{\sst}]_{\A^1}$ by adjointness and Corollary \ref{cor:EM BG-bispec-*}. This is isomorphic to $ \pi_p (\Sigma^{\infty}_s (\mathbb{Z}^{FS} (n))^{\sst}(X) )$ by Proposition \ref{prop:EM BG-bispec} and Corollary \ref{cor:Descent-CR-1}, which in turn is equal to $\pi_p ( (\mathbb{Z}^{FS} (n) )^{\sst}(X) ) $ by Proposition \ref{prop:EM BG-bispec}. This last group is $L^nH^{2n-p}(X)$ by \cite[Corollary 3.5]{FW2}. This proves the result.
\end{proof}

\begin{remk}Chu \cite{Chu} proves that the morphic cohomology is representable in the Voevodsky $\mathcal{DM} (\mathbb{C})$ of motives. Using motivic symmetric spectra (MSS) of Jardine \cite{Jardine} as a model for $\mathcal{SH}(\C)$, R\"ondigs and {\O}stv{\ae}r \cite{RO} identified $\mathcal{H}({\rm MSS}^{\tr})$ (MSS with trace) with $\mathcal{DM}(\mathbb{C})$, and constructed a Dold-Kan map $\psi: \mathcal{H}({\rm MSS}^{\tr}) \to \mathcal{H}({\rm MSS})$ to give adjoint functors $\phi: \mathcal{SH}(\C) \rightleftharpoons \mathcal{DM}(\C) : \psi$. 
By construction, one can check that $\phi( (\Sigma_T ^n \Sigma_s ^{\infty} \mathbf{H}\mathbb{Z})^{\hosst})$ is Chu's $\wp_{mor} (n)$ and our result is compatible with Chu's.
\end{remk}

\subsection{Excision and Localization for morphic cohomology}
\label{subsection:Loc-Morphic}
As a consequence of Theorems \ref{thm:K rep mot} and \ref{thm:morphic rep}, we obtain the following: 

\begin{thm}\label{thm:Exc-Loc-Morp}
The morphic cohomology of smooth schemes over $\C$ satisfies Nisnevich descent and localization. 
\end{thm}

\begin{proof}The arguments are standard, so we sketch the ideas. Given a Nisnevich square as in \eqref{eqn:elementary square}, by  \cite[Corollary 2.20]{Voevodsky Norway} there is a distinguished triangle in $\mathcal{SH} (\mathbb{C})$ of the form $\Sigma^{\infty}_T \Sigma^{\infty}_s W_{+} \to \Sigma^{\infty}_T \Sigma^{\infty}_s U_{+} \vee \Sigma^{\infty}_T \Sigma^{\infty}_s V_{+} \to \Sigma^{\infty}_T \Sigma^{\infty}_s X_{+} \to \Sigma^{\infty}_T \Sigma^{\infty}_s W_{+}[1]$. By applying $[-, (\Sigma^{\infty}_s\mathbf{H}\mathbb{Z})^{\hosst}]_{\A^1}$ and $[-, (\mathcal{K}^{\rm alg})^{\hosst}]_{\mathbb{A}^1}$, we obtain Nisnevich descent property. For localization, given a smooth closed immersion $Z \hookrightarrow X$ and the open complement $U \subset X$, by \cite[Lemma 2.16, Theorem 2.26]{Voevodsky Norway} we have a distinguished triangle in $\mathcal{SH} (\mathbb{C})$ of the form $\Sigma^{\infty}_T \Sigma^{\infty}_s U_{+} \to \Sigma^{\infty}_T \Sigma^{\infty}_s X_{+} \to \Sigma^{\infty}_T \Sigma^{\infty}_s {\rm Th} (N_{Z/X}) \to \Sigma^{\infty}_T \Sigma^{\infty}_s U_{+}[1],$ where ${\rm Th}(N_{Z/X})$ is the Thom space of the normal bundle. Applying $[-, (\Sigma^{\infty}_s\mathbf{H}\mathbb{Z})^{\hosst}]_{\A^1}$ and $[-, (\mathcal{K}^{\rm alg})^{\hosst}]_{\mathbb{A}^1}$ again, we obtain localization sequences, provided Thom isomorphisms of cohomologies of $Z$ and ${\rm Th}(N_{Z/X})$, up to a shift. Then, the projective bundle formula gives Chern classes (\cite[\S 3.6]{Panin}), and Thom isomorphism by \cite[Theorem 3.35]{Panin}.
\end{proof}

\begin{remk}
The definitions of $L^p H^q$ in \cite{FL} and \cite{FW2} assume quasi-projectivity of the underlying scheme, but we can redefine the morphic cohomology for all $X \in \Sm_\C$ using $\hosst$, as $L^n H^{2n-p}(X): = [\Sigma^{\infty}_T \Sigma^{\infty}_s X_{+}[p],\Sigma^n_T  (\Sigma^{\infty}_s\mathbf{H}\mathbb{Z})^{\hosst}]_{\A^1}.$ By Theorem \ref{thm:morphic rep}, this coincides with the previous one. 
\end{remk}

\section{Semi-topological cobordism}\label{sec:MGL}
The motivic Thom spectrum $\MGL$ (\cite[\S6.3]{Voevodsky}) is a $T$-spectrum $ \left(\MGL_0, \MGL_1, \cdots \right)$, where $\MGL_n$ is the motivic Thom space of the universal rank $n$ vector bundle $E_n$ on the Grassmann ind-scheme $Gr(n, \infty)$. The associated cohomology theory (\S \ref{sec:cohomology of space}) $\MGL^{p,q}(-)$ on $\Sm_{\mathbb{C}}$ is called the (Voevodsky) \emph{algebraic cobordism}. 

As an application of Theorem \ref{thm:sst hat-bispec}, we can define the semi-topological Thom spectrum $\MGL_{\sst}$ to be $\MGL^{\hosst}$ in $\mathcal{SH} (\mathbb{C})$. We call its associated bigraded cohomology theory $\MGL^{p,q}_{\sst}(-)$ on $\Sm_{\mathbb{C}}$, the \emph{semi-topological cobordism}.
The natural map $\MGL \to \MGL_{\sst}$ in $\mathcal{SH} (\mathbb{C})$ defines a natural transformation of bigraded cohomology theories $\MGL^{p,q}(-) \to  \MGL^{p,q}_{\sst}(-)$ on $\Sm_{\C}$. Using the morphism $\MGL \to \bf{H}\mathbb{Z}$, it follows from Theorem \ref{thm:morphic rep} that there is a commutative diagram
$$\xymatrix{
\MGL^{p,q}(-) \ar[r] \ar[d] & \MGL^{p,q}_{\sst}(-) \ar[d] \\
H^{p}_{\mathcal{M}}(-, \Z(q)) \ar[r] & L^pH^q(-)}$$
of cohomology theories on $\Sm_{\mathbb{C}}$. (The referee had kindly informed that, J. Heller \cite{Heller} had earlier defined this semi-topological cobordism by taking a fibrant replacement of $\MGL$ and applying the $\sst$-functor. By motivic descent theorems in \S\ref{sec:A1BG} and Theorem \ref{thm:s resp cd-ext}, this is object-wise weak-equivalent to ours, so that the resulting cohomology theories are equal.) A result of Hopkins and Morel says, for $X \in \Sm_{\C}$ and $n \ge 0$, there is an Atiyah-Hirzebruch type spectral sequence
$E^{p,q}(n) = H^{p-q}_{\mathcal{M}}(X, \Z(n-q)) \otimes_{\mathbb{Z}}  \bL^q  \Rightarrow \MGL^{p+q,n}(X),$
where $\bL =  \oplus_{q \leq 0} \bL^q$ is the Lazard ring. This result is in an unpublished form to the best of our knowledges, but based on the lecture notes in \cite{Lawson T}, a proof of an essential part is donw in \cite{Hoyois}. Our last goal is to apply $\hosst$ and the ideas of \cite{Hoyois}, \cite{Spitzweck 1}, \cite{Spitzweck 2} and \cite{Voevodsky zero} to produce an analogous spectral sequence for $\MGL_{\sst}$. We remark that a similar spectral sequence that relates the motivic cohomology to the algebraic $K$-theory was constructed in \cite{BL} and \cite{FS}, while for the semi-topological $K$-theory in \cite{FHW}.

Recall from \cite[\S 3]{Spitzweck 2} an analogue of the Postnikov tower for $E \in \mathcal{SH}(\C)$. Let $\mathcal{SH} (\C) ^{\rm eff} \subset \mathcal{SH} (\C)$ be the full localizing triangulated subcategory generated by $\Sigma_s ^i \Sigma_t ^j \Sigma_T ^{\infty} X_+$ for $i, j \in \mathbb{Z}$, $j \geq 0$ and $X \in \Sm_{\C}$. For $p \in \mathbb{Z}$, the inclusion $\iota_p : \Sigma_T ^p \mathcal{SH} (\C)^{\rm eff} \to \mathcal{SH} (\C)$ has a right adjoint $r_p$ such that $ r_p \circ \iota_p \simeq {\rm Id}$ (\emph{cf.} \cite[\S~4]{Voevodsky Norway}). Set $f_p := \iota_p\circ r_p$. There is a natural transformation $\rho_{p+1}: f_{p+1} \to f_p$. We define the slices $s_pE:=\cofib(\rho_{p+1})$. Thus, we have a sequence of maps $ {\to} f_p E {\to} \cdots {\to} f_1 E {\to} f_0 E {\to} f_{-1} E \to \cdots \to E.$ We also have a distinguished triangle $f_{p+1} E \to f_p E \to s_p E \to ( f_{p+1}E) [1]$ in $\mathcal{SH}(\C)$.

We say $E$ is \emph{effective} if the map $f_p E \to E$ is an isomorphism for $p \le 0$. By \cite[Remark 4.2]{Voevodsky Norway} and \cite[Corollary 3.2]{Spitzweck 1}, we have $f_p \MGL \simeq \MGL$ for all $p \leq 0$ and $s_p \MGL = 0$ for all $p<0$. In particular, $\MGL$ is effective. For $s_0 \MGL$, the natural map $\MGL \to \mathbf{H}\mathbb{Z}$ induces an isomorphism $s_0\MGL \simeq \mathbf{H}\mathbb{Z}$, by combining \cite[Corollary 3.3]{Spitzweck 1} and \cite{Voevodsky zero}.
Recall there is a morphism of ring spectra $\mathbb{L} \to \MGL$ and the natural map $\MGL \to s_0 \MGL = \mathbf{H}\mathbb{Z}$ factors as $\MGL \to \MGL \otimes _{\mathbb{L}} (\mathbb{L}/\mathbb{L}^{<0}) = \MGL  \otimes _{\mathbb{L}} \ \mathbb{Z} \to \mathbf{H}\mathbb{Z}$. The last map is an isomorphism in $\mathcal{SH} (\C)$ by \cite{Hoyois}. This implies $s_p \MGL \overset{\sim}{\to} \Sigma_T ^p \mathbf{H}\mathbb{L}^p$ by \cite[Theorem 4.7]{Spitzweck 1}, which we use below.

Fix $X \in \Sm_\mathbb{C}$ and $n \ge 0$.  We write $\Sigma_T ^{\infty} X_+$ as just $X$ and the hom sets $[-,-]_{\A^1}$ in $\mathcal{SH}(\C)$ as just $[-,-]$. Applying $\hosst$ to the sequence $ \to f_{2} \MGL \to f_1 \MGL \to f_0 \MGL = \MGL$ and to the distinguished triangle $f_{p+1} \MGL \to f_{p} \MGL \to s_p \MGL \to (f_{p+1} \MGL )[1]$, we get the sequences of maps
\begin{equation}\label{eqn:slice filtrn-sst}
  \cdots  {\to} (f_p \MGL)^{\hosst} {\to} \cdots {\to} (f_1 \MGL)^{\hosst} {\to} (f_0 \MGL)^{\hosst} = \MGL_{\sst},
\end{equation}
and by Theorem \ref{thm:sst hat-bispec} a distinguished triangle 
$(f_{p+1} \MGL)^{\hosst} \to (f_p \MGL)^{\hosst} \to (s_p \MGL)^{\hosst} \to (f_{p+1} \MGL)^{\hosst}[1]$
in $\mathcal{SH} (\mathbb{C})$. Applying $[X, -]$ to the triangle, we obtain an exact sequence
\begin{equation}\label{eqn:hom sst slice triangle}
[X, (f_{p+1}\MGL)^{\hosst}] \to [X, (f_p \MGL)^{\hosst}] \to [X, (s_p \MGL)^{\hosst}] 
\to [X, (f_{p+1}\MGL)^{\hosst}[1]].
\end{equation}

We now construct some exact couples. See \cite[\S2, Theorem 2.8]{McCleary} for related formalisms. For $p, q \in \mathbb{Z}$ and $n \ge 0$, define $
A^{p,q} (X, n):= [ X, \Sigma_{s} ^{p+q-n} \Sigma_{t} ^n (f_p \MGL)^{\hosst}].$ The map $\rho_p ^{\hosst} : (f_p \MGL)^{\hosst} \to (f_{p-1} \MGL)^{\hosst}$ induces a map $\rho_{p-1, q+1}: A^{p,q} (X, n) \to A^{p-1, q+1} (X, n).$ For the slices, we let
$
E^{p,q} (X, n):= [ X, \Sigma_{s} ^{p+q-n} \Sigma_{t} ^n (s_p \MGL)^{\hosst}].
$
From \eqref{eqn:hom sst slice triangle}, we get an exact sequence
$A^{p,q} (X, n) \to A^{p-1, q+1} (X, n) \to  E^{p-1, q+1} (X, n) \to A^{p+1, q} (X, n),$ where $\rho_{p-1,q+1}, \gamma_{p-1, q+1},$ and $ \delta_{p-1, q+1}$ are the arrows.
Set $D_1(X, n):= \oplus_{p,q} A^{p,q} (X, n)$ and  $E_1 (X, n):= \oplus_{p,q} E^{p,q} (X, n)$. Write $a_1:= \oplus \delta_{p-1,q+1}$, $b_1:= \oplus \rho_{p-1, q+1}$ and $c_1:= \oplus \gamma_{p-1,q+1}$. This gives an exact couple $\{ D_1, E_1, b_1, c_1, a_1 \}$. 
We let $d_1:= c_1 \circ a_1 : E_1 \to E_1$. That \eqref{eqn:hom sst slice triangle} is exact implies that $d_1 ^2  = 0$, and $(E_1, d_1)$ is a complex. Repeatedly taking homology, we obtain a spectral sequence. For the target of the spectral sequence, let $A^m(X, n):= \colim_{q \to \infty} A^{m-q, q} (X, n)$. Since $X$ is a compact object of $\mathcal{SH}(\C)$ (\emph{cf.} \cite[Proposition~5.5]{Voevodsky}), the colimit enters into $[-,-]$ thus $A^m (X, n)= [X, \Sigma_{s} ^{m-n} \Sigma_{t} ^n \MGL^{\hosst}]= \MGL_{\sst} ^{m, n} (X)$ by \eqref{eqn:slice filtrn-sst}. The formalism of exact couples yields a spectral sequence $E_1 ^{p,q} (X, n) = E^{p,q} (X, n) \Rightarrow A^{p+q} (X, n).$

We have $E_1 ^{p,q} (X, n) \simeq [X, \Sigma_{s} ^{p+q-n} \Sigma_{t}^n (s_p \MGL)^{\hosst}]\simeq [X, \Sigma_{s}^{p+q-n} \Sigma_{t} ^n (\Sigma_T ^p \mathbf{H}\mathbb{L}^p)^{\hosst}]$ because $s_p \MGL \overset{\sim}{\to} \Sigma_T ^p \mathbf{H}\mathbb{L}^p$ by \cite[Theorem 4.7]{Spitzweck 1}. By Corollary \ref{cor:Suspension-HZ} and adjointness, this is $ [X, \Sigma_{s}^{p+q-n} \Sigma_{t} ^n \Sigma_T ^p (\mathbf{H}\mathbb{L}^p)^{\hosst}]\simeq  [ X, \Sigma_s ^{p+q -2n} \Sigma_T ^{p+n} (\mathbf{H}\mathbb{L}^p )^{\hosst}] \simeq [\Sigma_s ^{2n-p-q} X, \Sigma_T ^{p+n} (\mathbf{H}\mathbb{L}^p )^{\hosst}]$. This is equal to $ L^{p+n} H^{ 2(p+n) - (2n-p-q)} (X) \otimes_{\mathbb{Z}} \mathbb{L}^p = L^{p+n} H^{3p+q} (X) \otimes_{\mathbb{Z}} \mathbb{L}^p,$ by Theorem \ref{thm:morphic rep} and \S\ref{sec:cohomology of space}. 
This $E_1$-spectral sequence is actually identical to an $E_2$-spectral sequence after reindexing. Indeed, let $\tilde{E}_2 ^{p',q'} (X,n)= L^{n-q'} H^{p'-q'} (X) \otimes_{\mathbb{Z}} \mathbb{L}^{q'}$. 
For $r':= r+1$, a simple calculation shows that the equality $E_r ^{p,q} = \tilde{E}_{r'} ^{p', q'}$ gives the equalities $p+n = n-q', 3p+q = p'-q', p = -q'$ so that
$E_r ^{p+r, q-r+1} = L^{p+r+n} H ^{3p+q+2r+1} (X)\otimes_{\mathbb{Z}} \mathbb{L}^{-p-r}
 =  L^{n-q'+r} H^{p'-q'+2r+1} (X) \otimes _{\mathbb{Z}} \mathbb{L}^{q'-r}= \tilde{E}_{r'} ^{p', q'},$ as desired. In summary we get an analogue of Hopkins-Morel spectral sequence:

\begin{thm}\label{thm:sst HM ss}
For $X \in \Sm_{\mathbb{C}}$ and $n \ge 0$, there is a  spectral sequence
$
E_2 ^{p,q} (n) = L^{n-q} H^{p-q} (X)\otimes _{\mathbb{Z}} \mathbb{L}^q \Rightarrow \MGL_{\sst} ^{p+q,n} (X).
$
There is a natural morphism of spectral sequences:
\begin{equation}\label{eqn:sst HM ss-0}
\xymatrix{
H^{p-q}_{\mathcal{M}}(X, \Z(n-q)) \otimes _{\mathbb{Z}} \mathbb{L}^q \ar[d] & \Rightarrow & \MGL^{p+q,n} (X) \ar[d] \\
L^{n-q} H^{p-q} (X) \otimes _{\mathbb{Z}} \mathbb{L}^q & \Rightarrow & \MGL_{\sst} ^{p+q,n} (X).}
\end{equation}
\end{thm}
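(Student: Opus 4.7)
The plan is to apply the slice-filtration machinery (Lemma~\ref{lem:HM 1} and Proposition~\ref{prop:n-th slice}) to $\MGL$ and push it through the homotopy semi-topologization $\hosst$, exploiting the fact that $\hosst$ is a triangulated endo-functor on $\mathcal{SH}(\C)$ (Theorem~\ref{thm:sst hat-bispec}). Concretely, for each $X \in \Sm_{\C}$ and $n \ge 0$, I would form an exact couple out of the distinguished triangles
\[
(f_{p+1}\MGL)^{\hosst} \to (f_p\MGL)^{\hosst} \to (s_p\MGL)^{\hosst} \to (f_{p+1}\MGL)^{\hosst}[1]
\]
by applying $[\Sigma_T^{\infty} X_+, \Sigma_s^{p+q-n}\Sigma_t^n(-)]_{\A^1}$. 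The standard derivation from an exact couple then yields an $E_1$-page spectral sequence whose $E_1^{p,q}$-term is $[\Sigma_T^{\infty} X_+, \Sigma_s^{p+q-n}\Sigma_t^n(s_p\MGL)^{\hosst}]_{\A^1}$.

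The next step is to identify these $E_1$-terms. Starting from $s_p\MGL \simeq \Sigma_T^p\mathbf{H}\mathbb{L}^p$ (Proposition~\ref{prop:n-th slice}) and pulling $\Sigma_T^p$ outside $\hosst$ via Corollary~\ref{cor:Suspension-HZ}, I would get $(s_p\MGL)^{\hosst} \simeq \Sigma_T^p(\mathbf{H}\mathbb{L}^p)^{\hosst}$. Since $\mathbb{L}^p$ is a free abelian group and $\hosst$ is additive, representability of morphic cohomology (Theorem~\ref{thm:morphic rep}) translates the resulting hom-groups into $L^{p+n}H^{3p+q}(X) \otimes_{\Z} \mathbb{L}^p$, exactly as computed in \eqref{eqn:STSS-0}. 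A reindexing of the shape $(p,q) \mapsto (p+r,q-r+1)$ converts this $E_1$-sequence into an $E_2$-sequence of the advertised form.

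For the abutment, I would use that $\Sigma_T^{\infty} X_+$ is a compact object of $\mathcal{SH}(\C)$ by \cite[Proposition~5.5]{Voevodsky}, so $[-,-]_{\A^1}$ commutes with filtered colimits in the first variable. Combined with Lemma~\ref{lem:HM 1}, which gives $f_p\MGL \simeq \MGL$ for all $p \le 0$, this makes the slice filtration stabilize from below, and so $\colim_{q \to \infty} A^{m-q,q}(X,n) \simeq \MGL_{\sst}^{m,n}(X)$.

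Finally, to obtain the morphism of spectral sequences \eqref{eqn:sst HM ss-0}, I would run the identical construction once with the identity endo-functor on $\mathcal{SH}(\C)$ (recovering the Hopkins--Morel spectral sequence \eqref{eqn:MSS}) and once with $\hosst$, and then use the natural transformation $\id \to \hosst$ to induce a compatible morphism between the two slice towers and hence between the two exact couples. On the $E_2$-pages this morphism recovers the natural transformation $H^*_{\mathcal{M}}(X,\Z(*)) \to L^*H^*(X)$ of \eqref{eqn:Top-Real-0} tensored with $\mathbb{L}^*$, and the map on abutments is $\MGL^{*,*}(X) \to \MGL_{\sst}^{*,*}(X)$ from \eqref{eqn:coh-diagram}. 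The main technical point I expect is the bookkeeping of the bidegree shifts during the reindexing step, along with checking that the natural transformation $\id \to \hosst$ intertwines the slice-tower triangles properly; both are essentially formal consequences of the triangulated structure of $\hosst$ and the compactness of $\Sigma_T^{\infty} X_+$, but they need to be verified carefully.
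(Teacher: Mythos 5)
Your proposal follows essentially the same route as the paper: the slice tower of $\MGL$ pushed through the triangulated functor $\hosst$, exact couples formed by applying $[\Sigma_T^{\infty}X_+,\Sigma_s^{p+q-n}\Sigma_t^n(-)]_{\A^1}$, identification of the $E_1$-terms via Proposition~\ref{prop:n-th slice}, Corollary~\ref{cor:Suspension-HZ} and Theorem~\ref{thm:morphic rep}, the same reindexing to an $E_2$-page, compactness of $\Sigma_T^{\infty}X_+$ for the abutment, and the natural transformation $\id\to\hosst$ for the comparison with the Hopkins--Morel spectral sequence. This matches the paper's argument, so the proposal is correct as it stands.
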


Repeating the argument  for $\MGL$ smashed with mod $l$-Moore spectrum, and using that the left vertical arrow in \eqref{eqn:sst HM ss-0} mod $l$ is an isomorphism (\emph{cf.} \cite[Theorem 30]{FW Handbook}), we deduce that $\MGL ^{p,q}$ and $\MGL_{\sst}^{p,q}$ are identical with finite coefficients. On the other hand, applying \cite[Corollary~10.6]{NSO}, we note the spectral sequence of Theorem \ref{thm:sst HM ss} degenerates tensoring with $\mathbb{Q}$. Here is a summary:

\begin{cor}\label{cor:MGL-fin}
Let $X \in \Sm_{\mathbb{C}}$ and $p, q \in \mathbb{Z}$. For $l \ge 1$, we have $\MGL^{p,q} (X, \mathbb{Z}/l) \simeq \MGL^{p,q} _{\sst} (X, \mathbb{Z}/l).$ We also have  $\MGL^{*,*}_{\sst}(X) \otimes_{\mathbb{Z}} \Q \simeq L^*H^*_{\Q}(X) \otimes_{\mathbb{Z}} \bL$ as graded $\bL_{\Q}$-modules. 
\end{cor}

Let $\Omega^* _{\alg}$ be the algebraic cobordism modulo algebraic equivalence in \cite{KP}. By the universal property of $\Omega_{\alg} ^* (-)$, there is a natural functor $\Omega_{\alg} ^* (-) \to \MGL_{\sst} ^{2*, *} (-)$.

\begin{cor}\label{cor:Point-sst}
The maps $\bL \to \Omega_{\alg} ^* (pt) \to \MGL_{\sst} ^{2*, *} (pt)$ are isomorphisms.
\end{cor}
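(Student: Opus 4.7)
The plan is to apply the spectral sequence of Theorem~\ref{thm:sst HM ss} at $X = pt$, show that it collapses at $E_2$ with $\MGL_{\sst}^{2*,*}(pt) \cong \bL$, and then use a short formal argument to conclude that both maps in the factorization are isomorphisms.

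First I would compute the morphic cohomology of a point. Since $\dim pt = 0$, the comparison map $L^n H^i(pt) \to H^i_{\mathrm{sing}}(pt, \Z)$ from \eqref{eqn:Top-Real-0} is an isomorphism for all $n \ge 0$ (this is a classical consequence of the Friedlander--Lawson comparison in top dimension, and can also be verified directly since the space of algebraic cocycles on a point is already of the correct homotopy type). Hence $L^n H^i(pt) = \Z$ when $i = 0$ and $n \ge 0$, and vanishes otherwise. Substituting into $E_2^{p,q}(n) = L^{n-q}H^{p-q}(pt) \otimes \bL^q$, the nonzero terms are precisely those with $p = q$, $q \le n$, and $\bL^q \neq 0$, in which case the term equals $\bL^q$. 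In particular, the $E_2$-page is concentrated on the diagonal $\{p = q\}$.

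The spectral sequence then degenerates at $E_2$: the differential $d_r \colon E_r^{p,q} \to E_r^{p+r, q-r+1}$ takes a diagonal source $(p,p)$ to $(p+r, p-r+1)$, which lies on the diagonal only if $2r = 1$, impossible for $r \ge 2$. Hence every differential vanishes on the nonzero terms. For $\MGL_{\sst}^{2n, n}(pt)$ the constraint $p + q = 2n$ combined with $p = q$ singles out the unique surviving contribution $E_\infty^{n, n}(n) = \bL^n$, and summing over $n$ yields $\MGL_{\sst}^{2*, *}(pt) \cong \bL$.

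To identify this isomorphism with the canonical composite $\bL \to \Omega_{\alg}^*(pt) \to \MGL_{\sst}^{2*, *}(pt)$, I would invoke the morphism of spectral sequences \eqref{eqn:sst HM ss-0} together with the Hopkins--Morel identification $\bL \xrightarrow{\simeq} \MGL^{2*, *}(pt)$ due to \cite{Hoyois}. On the decisive $(n, n)$-spot, the comparison restricts to the identity $H^0_{\mathcal{M}}(pt, \Z) \otimes \bL^n \xrightarrow{\simeq} L^0 H^0(pt) \otimes \bL^n$, so the composite $\bL \to \MGL^{2*,*}(pt) \to \MGL_{\sst}^{2*, *}(pt)$ is an isomorphism. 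Since $\bL \to \Omega_{\alg}^*(pt)$ is surjective (it factors through the Levine--Morel isomorphism $\bL \simeq \Omega^*(pt)$ followed by the canonical quotient to $\Omega_{\alg}^*(pt)$) and the overall composite is an isomorphism, both maps of the corollary are isomorphisms. The most delicate ingredient is the vanishing of $L^n H^i(pt)$ for $i \neq 0$ and the consistent tracking of the grading conventions for $\bL^q$ through the reindexing in the derivation of Theorem~\ref{thm:sst HM ss}; once those are in hand, the degeneration and identification are formal.
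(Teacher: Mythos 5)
Your argument is correct in substance, but it reaches the conclusion by a genuinely different route than the paper for the decisive injectivity step. Both proofs rest on the same two pillars: the spectral sequence of Theorem~\ref{thm:sst HM ss} at $X=pt$ together with the vanishing $L^{t}H^{i}(pt)=0$ for $i\neq 0$ (which the paper uses only implicitly, to get surjectivity of $\Omega_{\alg}^{*}(pt)\to \MGL_{\sst}^{2*,*}(pt)$), and an external input identifying a cobordism ring of the point with $\bL$. You go ``up'' to the algebraic side: degeneration of the semi-topological sequence gives $\MGL_{\sst}^{2n,n}(pt)\cong\bL^{n}$ abstractly, and the comparison \eqref{eqn:sst HM ss-0} together with Hoyois' theorem $\bL\simeq\MGL^{2*,*}(pt)$ shows the composite $\bL\to\MGL^{2*,*}(pt)\to\MGL_{\sst}^{2*,*}(pt)$ is onto, hence (each graded piece of $\bL$ being a finitely generated free abelian group) an isomorphism; you then only need $\bL\to\Omega_{\alg}^{*}(pt)$ to be surjective, which follows from Levine--Morel plus the quotient description of $\Omega_{\alg}^{*}$. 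The paper instead goes ``down'' to topology: it quotes \cite{KP} for $\bL\xrightarrow{\simeq}\Omega_{\alg}^{*}(pt)$, extracts surjectivity of $\Omega_{\alg}^{*}(pt)\to\MGL_{\sst}^{2*,*}(pt)$ from the spectral sequence, and gets injectivity by composing with $\MGL_{\sst}^{2*,*}(pt)\to\MU^{2*}(pt)$ and invoking the \cite{KP} isomorphism $\Omega_{\alg}^{*}(pt)\simeq\MU^{2*}(pt)$; this avoids any discussion of the algebraic Hopkins--Morel sequence at the point. Your approach buys independence from the topological realization and from the full strength of \cite{KP}, at the cost of two points you should make explicit: (i) for the comparison at the $(n,n)$-spot to control the map of abutments you must also note that the algebraic-side class is a permanent cycle, which follows since $H^{p}_{\mathcal{M}}(pt,\Z(q))=0$ for $p>q$ and for $q<0$ (so all differentials in and out of $(n,n)$ on the algebraic side vanish), together with the convergence packaged in Theorem~\ref{thm:sst HM ss}; and (ii) the composite you analyze, $\bL\to\MGL^{2*,*}(pt)\to\MGL_{\sst}^{2*,*}(pt)$, agrees with the composite of the corollary because every map involved is a morphism of oriented theories, so both classify the formal group law of $\MGL_{\sst}$. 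With those two sentences added, your proof is complete and at the same level of rigor as the paper's.
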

\begin{proof}
The first map is an isomorphism by \cite[Theorem 1.2(2)]{KP}. The spectral sequence in Theorem \ref{thm:sst HM ss} shows that $\Omega_{\alg} ^* (pt) \to \MGL_{\sst} ^{2*, *} (pt)$ is surjective. Composing with $\MGL_{\sst} ^{2*, *} (pt) \to \MU^{2*} (pt)$ gives an isomorphism $\Omega_{\alg} ^* (pt) \simeq \MU^{2*} (pt) \simeq \mathbb{L}$ by \emph{loc.cit}. In particular, the map $\Omega_{\alg} ^*(pt) \to \MGL_{\sst} ^{2*, *} (pt)$ is injective.
\end{proof}

For the algebraic cobordism $\Omega^* (-)$ of \cite{LM}, the map $\Omega^* (X) \to \MGL^{2*, *} (X)$ is an isomorphism for $X \in \Sm_{\mathbb{C}}$ by \cite{Levine}. Combining Theorem \ref{thm:sst HM ss}, Corollary \ref{cor:Point-sst}, and the methods of \cite{Levine}, probably it is possible to prove that $\Omega_{\alg} ^* (X) \to \MGL_{\sst} ^{2*, *} (X)$ is an isomorphism for $X \in \Sm_{\C}$. But we do not attempt this in this paper.

\begin{ack}
We thank Aravind Asok, Denis-Charles Cisinski, Fr\'ed\'eric D\'eglise, Bertrand Guillou, Christian Haesemeyer, Marc Levine, Pablo Pelaez, and Markus Spitzweck, and the referee of AGT. 
During this work, JP was partially supported by the National Research Foundation of Korea (NRF) grant (No. 2012-0000796) and Korea Institute for Advanced Study (KIAS) grant, both funded by the Korean government (MEST), and TJ Park Junior Faculty Fellowship funded by POSCO TJ Park Foundation.

\end{ack}

\end{document}